\def\today{\number\day\space\ifcase\month\or   January\or February\or
   March\or April\or May\or June\or   July\or August\or September\or
   October\or November\or December\fi\   \number\year}
\theoremstyle{definition}
\newtheorem{lma}{Lemma}[section]
\newaliascnt{thmCt}{lma}
\newtheorem{thm}[thmCt]{Theorem}
\newaliascnt{corCt}{lma}
\newtheorem{cor}[corCt]{Corollary}
\newaliascnt{propCt}{lma}
\newtheorem{prop}[propCt]{Proposition}
\newtheorem*{thm*}{Theorem}
\newtheorem*{qst*}{Question}
\newtheorem*{cnj*}{Conjecture}
\newtheorem*{cor*}{Corollary}
\newtheorem*{prop*}{Proposition}
\newcounter{theoremintro}
\newtheorem{thmintro}[theoremintro]{Theorem}
\newtheorem{corintro}[theoremintro]{Corollary}
\newtheorem{cnjintro}[theoremintro]{Conjecture}
\newaliascnt{pgrCt}{lma}
\newaliascnt{dfCt}{lma}
\newtheorem{df}[dfCt]{Definition}
\newaliascnt{remCt}{lma}
\newtheorem{rem}[remCt]{Remark}
\newaliascnt{remsCt}{lma}
\newaliascnt{egCt}{lma}
\newtheorem{eg}[egCt]{Example}
\newaliascnt{egsCt}{lma}
\newaliascnt{qstCt}{lma}
\newtheorem{qst}[qstCt]{Question}
\newaliascnt{pbmCt}{lma}
\newaliascnt{notaCt}{lma}
\newaliascnt{cnjCt}{lma}
\newcommand{\beq}{\begin{equation}}
\newcommand{\eeq}{\end{equation}}
\newcommand{\beqa}{\begin{eqnarray*}}
\newcommand{\eeqa}{\end{eqnarray*}}
\newcommand{\bal}{\begin{align*}}
\newcommand{\eal}{\end{align*}}
\newcommand{\bi}{\begin{itemize}}
\newcommand{\ei}{\end{itemize}}
\newcommand{\be}{\begin{enumerate}}
\newcommand{\ee}{\end{enumerate}}
\newcommand{\N}{{\mathbb{N}}}
\newcommand{\G}{{\mathcal{G}}}
\newcommand{\B}{{\mathcal{B}}}
\newcommand{\id}{{\mathrm{id}}}
\newcommand{\supp}{{\mathrm{supp}}}
\newcommand{\Gn}[1]{\mathcal{G}^{(#1)}}
\newcommand{\I}{\infty}
\newcommand{\Set}[1]{\{ #1 \}}
\newcommand{\acts}{\curvearrowright}
\date{\today}
\title[Generalisations of Thompson's group from purely infinite groupoids]{Generalisations of Thompson's group V arising from purely infinite groupoids}
\thanks{
The first named author was supported by a starting grant of 
the Swedish Research Council.
The second named author has received funding from the European Research Council (ERC) under the European Union's Horizon 2020 research and innovation programme (grant agreement No. 817597). Much of the work of this project was undergone during a visit of the second named author to the WWU M\"unster, where the second author was partially supported by the Deutsche Forschungsgemeinschaft (DFG, German Research Foundation) under Germany's Excellence Strategy EXC 2044 –390685587, Mathematics M\"unster: Dynamics–Geometry–Structure, and through SFB 1442, and by the European Research Council through ERC Advanced Grant 834267 - AMAREC}
\author{Eusebio Gardella}
\address[Eusebio Gardella]
{Department of Mathematical Sciences, Chalmers University of
Technology and University of Gothenburg, Gothenburg SE-412 96, Sweden.}
\email{gardella@chalmers.se}
\urladdr{www.math.chalmers.se/~gardella}
\author{Owen Tanner}
\address[Owen Tanner]{School of Mathematics and Statistics,
University of Glasgow,
\linebreak University Place, Glasgow G12 8QQ, United Kingdom}
\email[]{o.tanner.1@research.gla.ac.uk}
\urladdr{https://owentanner1997.wordpress.com/}
\begin{document}

\begin{abstract}
We study a class of generalisations of Thompson's group $V$ arising naturally as topological full groups of purely infinite, minimal groupoids. In the process, we show that the derived subgroup of such a group has no proper characters, and that it is  2-generated whenever it is finitely generated. We characterise the class of purely infinite, minimal groupoids through a number of group-theoretic conditions on their full groups including vigor, the existence of suitable embeddings of $V$, and compressibility. We moreover give a complete abstract description of those groups that arise as either topological full groups or derived subgroups of purely infinite, minimal groupoids. 
As an application, we obtain several new facts about 
the Brin-Higman-Thompson groups, including C*-simplicity and a description of all
their proper characters.
\end{abstract}

\maketitle

\renewcommand*{\thetheoremintro}{\Alph{theoremintro}}

\section{Introduction}

In his famous unpublished notes from 1967 \cite{thompson},
Thompson introduced three groups which are now most
commonly denoted $F\subseteq T\subseteq V$. 
Although Thompson's motivation for considering these groups came from algebraic
logic, they received a great deal of attention in the context of 
the von Neumann problem (which asked whether every 
nonamenable group must contain the free group $\mathbb{F}_2$). It is known that $T$ and $V$ contain a free group, so that they cannot possibly give a 
negative answer to the problem. On the other hand, $F$ is known not to contain free groups, but it remains open whether it is amenable or not. Beyond the von 
Neumann problem, these groups are quite remarkable in that they exhibit a number of unusual properties. In the setting of group
theory, the role of the group $V$ cannot be overstated. 
For example, $V$ was the first example of a simple, infinite and finitely presented group, 
and as such it remains a fundamental object in the study of combinatorial group theory. For a discussion and introductory notes on Thompson's groups, we refer the reader to \cite{cannon2011thompson, cannon1996introductory}. 

Since the introduction of $V$, group theorists have created vast families of groups generalising it, which for the purposes of this introduction we call \emph{Thompson-like groups}. Examples of such groups have been studied by Higman \cite{higman}, Brin \cite{brin}, Cleary \cite{cleary1995groups} and Stein \cite{stein1992groups}. These groups are very diverse, and one aspect that unifies them is that they have simple derived subgroups with certain finiteness properties. 

In this work, we take a groupoid perspective on 
Thompson-like groups, and study them from the point of
view of topological full groups and the associated
derived subgroups. More explicitly,
beginning with an ample, essentially principal groupoid $\mathcal{G}$, its topological full group $\mathsf{F}(\mathcal{G})$ is defined as the group of homeomorphisms of the unit space that are piecewise formed from finitely many compact open bisections. In essence, one builds global symmetries from local ones; see \autoref{groupoid tfg} for the 
precise definition. We now know much about the subgroup structure of these groups; for example, the derived subgroup  
$\mathsf{D}(\mathcal{G})=\big[\mathsf{F}(\G),\mathsf{F}(\G)\big]$ is often simple, and often agrees with the so-called alternating group $\mathsf{A}(\mathcal{G})$, which is the subgroup of $\mathsf{F}(\G)$ generated by certain elements of order three.

Since its conception, the framework of topological full groups has served as a bridge between Thompson-like groups and the groupoid models of purely infinite C*-algebras, a line of research that was pioneered by Matui \cite{Mat_homology_2012}. 
In fact, promising observations by Nekrashevych \cite{nekrashevych2004cuntz} predate, and even motivate, the definition of topological full groups of \'etale groupoids by Matui. Because the underlying topological groupoids are often much more accessible to study than the Thompson-like groups themselves, this new dynamical perspective has led to progress in our understanding of the homology \cite{Li_ample_2022}, finite presentation \cite{li2021left}, and subgroup structure of Thompson-like groups \cite{bon2018rigidity}.

We summarise the correspondence between Thompson-like groups and purely infinite groupoids in the table below, where $\mathcal{E}_k$ is the full shift on $k$ generators, $\mathcal{R}_r$ is the full equivalence relation on $r$ generators, $\mathcal{O}_k$ is the Cuntz algebra on $k$ generators, and $\mathcal{Q}^\lambda$ is the class of Kirchberg algebras considered in \cite{li2015new, carey2011families}: 

\begin{figure}[h!]
    \centering
  \scalebox{1.0}{ \begin{tabular}{|c|c|c|}
    \hline \textbf{C$^*$-algebra} &
    \textbf{Groupoid}   &  \textbf{Thompson-like group}   \  \\ \hline
    
    $\mathcal{O}_2$ & $\mathcal{E}_2$    & $V$ \ \\
    
    $M_r(\mathcal{O}_k)$& $\mathcal{R}_r \times \mathcal{E}_k $ & $ V_{k,r}$  \ \\
     $M_r(\bigotimes_{j=1}^n\mathcal{O}_k)$& $\mathcal{R}_r \times \mathcal{E}_k^n $ & $n V_{k,r}$  \ \\
     $\mathcal{Q}^\lambda$ & Particular partial actions & Irrational slope Thompson groups  \ \\ \hline
     \end{tabular}
     }
    \caption{Realisations of Thompson-like groups as topological full groups}
    \label{table 1}
\end{figure}

The dynamical realisation of the Brin-Higman-Thompson groups is described in \cite{Mat_etale_2016}, while the realisation of irrational slope Thompson groups as topological full groups is the subject of the second author's PhD thesis \cite{mythesis}. 

In light of this correspondence between groupoids and Thompson-like groups, Matui proposed in \cite{Mat_topological_2015}
to regard the topological full groups of other generalisations of the full shift on two generators as generalisations of $V$, and in particular
study them as Thompson-like groups. In turn, analysing the topological full groups led us to the discovery of interesting new groups, many of which exhibit properties enjoyed by Thompson's group $V$. This vein of research is summarised in the table below: 

\begin{figure}[h!]
    \centering
  \scalebox{0.90}{ \begin{tabular}{|c|c|c|}
    \hline \textbf{C$^*$-algebra} &
    \textbf{Groupoid $\mathcal{G}$}   & Is $\mathsf{A}(\mathcal{G})$ simple and \  \\  & & finitely presented? \ \\ \hline
  Cuntz-Krieger algebras & SFT Groupoids & Yes  \ \\
   Tensors of Cuntz-Krieger algebras & Products of SFT groupoids & Yes \ \\
   Graph algebras & Graph groupoids & Yes  \ \\
  Katsura-Exel-Pardo algebras & Katsura-Exel-Pardo groupoids & Yes \ \\

\hline
     \end{tabular}} 
    \caption{New generalisations of Thompson's group $V$ from topological full groups}
    \label{Table 2}
\end{figure}

Very general structural properties have been studied for these examples in \cite{Mat_etale_2016}, and significant progress toward homological properties has been made in \cite{Li_ample_2022}. In particular, topological full groups inside this class give rise to many new examples of finitely presented, simple, infinite groups \cite{Li_ample_2022, li2021left, MatMat_topological_2014, Mat_topological_2015, NylOrt_topological_2019, NylOrt_matuis_2021, NylOrt_katsura_2021}.

C*-algebraists have a deep source of such groupoids which have been developed primarily as a way to construct simple, purely infinite, nuclear C*-algebras -- also known as Kirchberg algebras; see \cite{Spi_models_2007}. The relevance of these examples stems from the fact that such C*-algebras play a crucial role in the classification theory of simple, nuclear C*-algebras pioneered by Elliott; see \cite{phillips2000classification}. Each of the underlying groupoids is minimal, topological principal and purely infinite in the sense of Matui \cite{Mat_topological_2015}.

In this paper, we study the topological full groups of minimal, purely infinite groupoids. We show that this class of groupoids can be characterised in a number of ways intrinsic to the topological full groups. We summarise these characterisations 
in the following result:

\begin{thmintro}\label{thmintro:equiv}
    Let $\G$ be an essentially principal, ample groupoid. Then the following are equivalent:
    \begin{enumerate}
        \item $\G$ is purely infinite and minimal.
        \item $\mathsf{A}(\G)$ is a vigorous subgroup of $\mathrm{Homeo}(\G^{(0)})$ in the sense of Bleak-Elliott-Hyde \cite{BleEllHyd_sufficient_2020}. 
        \item For every $x_0 \in \G^{(0)}$, the subgroup $$\mathsf{A}(\G)_{x_0}=\{ g \in \mathsf{A}(\G) \colon \text{there exists a neighbourhood } Y \text{ of } x_0 \text{ such that } g|_Y=1 \}$$
        acts compressibly on $\G^{(0)}\setminus \{x_0\}$ in the sense of  Dudko-Medynets \cite{DudMed_finite_2014}.  
         \end{enumerate}
         Moreover, if any of the above holds, then
for every nontrivial compact open subset $X \subsetneq \G^{(0)}$, there is an embedding $$\phi_X\colon V \rightarrow \mathsf{A}(\G)$$ such that $X \subseteq \supp (\phi_X(V))$.
\end{thmintro}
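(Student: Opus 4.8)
The plan is to establish the cycle of implications $(1)\Rightarrow(4)\Rightarrow(2)\Rightarrow(3)\Rightarrow(1)$; I assume throughout, as is standard in this setting, that $\G^{(0)}$ is a Cantor space, so in particular it has no isolated points. The argument will lean on the following pieces of the standard dictionary for minimal, purely infinite, ample groupoids: the type semigroup is $S(\G)=\{0,\I\}$; two compact open sets $U,V\subseteq\G^{(0)}$ are carried onto one another by a single compact open bisection if and only if $[\mathbbm 1_U]=[\mathbbm 1_V]$ in $H_0(\G)\cong K_0(C^*_r(\G))$, and every $H_0$-class is realised by compact open subsets of any prescribed nonempty compact open set; $\mathsf A(\G)=\mathsf D(\mathsf F(\G))$; and for any nonempty compact open $Z\subseteq\G^{(0)}$ the restriction $\G|_Z$ is again minimal and purely infinite with $H_0(\G|_Z)\cong H_0(\G)$, and $\mathsf A(\G|_Z)$ embeds in $\mathsf A(\G)$ by extension by the identity, landing among the elements supported in $Z$.

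For $(1)\Rightarrow(4)$ I would proceed as follows. Given a nontrivial compact open $X\subsetneq\G^{(0)}$, first pick a compact open $D\subseteq\G^{(0)}\setminus X$ with $[\mathbbm 1_D]=-[\mathbbm 1_X]$ and set $S:=X\sqcup D$, so that $X\subseteq S$ and $[\mathbbm 1_S]=0$. The restriction $\G|_S$ is then minimal and purely infinite with $[\mathbbm 1_S]=0$ in $H_0(\G|_S)$, and the vanishing of this class is precisely the obstruction that must disappear in order to realise $S$ as the unit space of a ``full $2$-shift subgroupoid'': one wants compact open bisections $\xi_0,\xi_1$ of $\G|_S$ with $s(\xi_0)=s(\xi_1)=S$ and $r(\xi_0)\sqcup r(\xi_1)=S$ such that the iterated ranges form clopen partitions of $S$ at every depth and generate its topology. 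Writing $P_i:=r(\xi_i)$, the relations $[\mathbbm 1_{P_0}]=[\mathbbm 1_{P_1}]=[\mathbbm 1_S]$ forced here are consistent with $[\mathbbm 1_S]=0$ exactly because $0+0=0$, and the equidecomposability criterion supplies the bisections; arranging the generating-topology condition is the groupoid incarnation of a Cartan-compatible unital embedding $\Ot\hookrightarrow C^*_r(\G|_S)$. Granting this structure, the subgroup of $\mathsf F(\G)$ generated by the associated prefix-replacement homeomorphisms is a homomorphic image of $V$, hence (being nontrivial, and $V$ simple) isomorphic to $V$; because the partitions generate the topology it acts on $S\cong\{0,1\}^{\mathbb N}$ as the standard, vigorous action, its support is exactly $S\supseteq X$, and it lies in $\mathsf A(\G)$ since its canonical order-three generators transport to order-three generators of $\mathsf A(\G|_S)\subseteq\mathsf A(\G)$ (equivalently, $V=\mathsf D(V)\subseteq\mathsf D(\mathsf F(\G))=\mathsf A(\G)$). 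Taking $\phi_X$ to be this embedding gives (4).

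The next two implications I expect to be comparatively soft. For $(4)\Rightarrow(2)$: given a compact open $A\subsetneq\G^{(0)}$ and proper nonempty compact open $B,C\subsetneq A$, applying (4) with $X=A$ exhibits $B$ and $C$ as proper clopen subsets of $S:=\supp(\phi_A(V))\supseteq A$, and vigor of the $V$-action on $S$ relative to the ambient set $A$ yields $h\in\phi_A(V)\subseteq\mathsf A(\G)$ with $\supp(h)\subseteq A$ and $h(B)\subseteq C$; the case $A=\G^{(0)}$ then follows from the routine fact that vigor on all proper clopen sets chains up to global vigor. For $(2)\Rightarrow(3)$: fixing $x_0$, a compact $K\subseteq\G^{(0)}\setminus\{x_0\}$ and a nonempty open $U\subseteq\G^{(0)}\setminus\{x_0\}$, one replaces $K,U$ by clopen sets avoiding $x_0$, uses that $x_0$ is not isolated to choose a clopen neighbourhood $W$ of $x_0$ disjoint from $K\cup U$ and small enough that $K,U$ are proper subsets of $A:=\G^{(0)}\setminus W$, and applies the vigor of $\mathsf A(\G)$ to $(A,K,U)$: the resulting $g$ fixes $W$, so $g\in\mathsf A(\G)_{x_0}$ and $g(K)\subseteq U$, which is exactly compressibility.

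Finally $(3)\Rightarrow(1)$, where both conclusions use only that $\mathsf A(\G)_{x_0}\subseteq\mathsf F(\G)$ preserves orbit closures and consists of piecewise-bisection homeomorphisms. If $\G$ were not minimal, a short case analysis yields either a point $x_0$ lying in a proper, nonempty, closed $\mathsf F(\G)$-invariant set $F$ with $F\neq\{x_0\}$, or a point $x_0$ with dense orbit closure together with a disjoint singleton orbit closure (with $\mathsf A(\G)=\{1\}$ in the fully degenerate case); in each case an element of $\mathsf A(\G)_{x_0}$, preserving the relevant proper closed invariant set, cannot compress a clopen set meeting it into a clopen subset of its invariant complement, contradicting compressibility. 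For pure infiniteness, given nonempty compact open $U$ one picks $x_0\notin U$ and a clopen partition $U=U_1\sqcup U_2$ into nonempty pieces, extracts from compressibility elements $g_i\in\mathsf A(\G)_{x_0}$ with $g_i(U)\subseteq U_i$, and reads off from $g_i|_U$ compact open bisections exhibiting $[\mathbbm 1_U]+[\mathbbm 1_U]\le[\mathbbm 1_U]$ in $S(\G)$, which a standard absorption argument (also handling $U=\G^{(0)}$) upgrades to $S(\G)=\{0,\I\}$; together with minimality this is (1). The hard part of the whole argument is $(1)\Rightarrow(4)$, and within it the construction of the self-similar dyadic bisection structure on $S$ whose partitions generate the topology: the preliminary reduction to a compact open $S\supseteq X$ with $[\mathbbm 1_S]=0$ is genuinely necessary rather than bookkeeping, since for $\G=\mathcal E_3$ no such structure can exist on all of $\G^{(0)}$ --- it would give a unital embedding $\Ot\hookrightarrow\mathcal O_3$, impossible on $K$-theoretic grounds --- whereas the reverse implications reduce, as indicated, to the two elementary structural observations about $\mathsf F(\G)$ above.
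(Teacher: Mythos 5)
Your overall architecture differs from the paper's: you run a cycle $(1)\Rightarrow(4)\Rightarrow(2)\Rightarrow(3)\Rightarrow(1)$, whereas the paper proves each of (2), (3), (4) equivalent to (1) separately, in every case routing through Matui's characterisation (\autoref{lma:PIcompact and open}): $\G$ is purely infinite and minimal iff every compact open $X$ can be carried into every nonempty compact open $Y$ by a compact open bisection. That hub makes each ``backwards'' implication a one-liner (extract a bisection from a group element compressing $X$ into $Y$), and it is exactly what your $(3)\Rightarrow(1)$ reconstructs by hand: your pure-infiniteness step (restrict the bisections of $g_1,g_2$ with $g_i(U)\subseteq U_i$ disjoint) is fine and is really a direct verification of \autoref{df:PI}, but your minimality argument via orbit closures is the vaguest part of the proposal and is unnecessary --- once you can move any $U_1\in\mathcal{U}$ into any $U_2\in\mathcal{U}$ by an element of $\mathsf{A}(\G)_{x_0}$, condition (2) of \autoref{lma:PIcompact and open} gives minimality and pure infiniteness in one stroke. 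Your reduction in $(1)\Rightarrow(4)$ to a clopen $S\supseteq X$ with $[\mathbbm{1}_S]=0$ matches the paper's (the paper takes $S=\G^{(0)}\setminus r(B)$ for a full bisection $B$ with range missing $X$, avoiding the need to realise the class $-[\mathbbm{1}_X]$), and your observation that the trivial-class reduction is genuinely necessary (the $\mathcal{E}_3$ example) is correct.

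The genuine gaps are these. First, in $(1)\Rightarrow(4)$ you write ``granting this structure'' for precisely the step that carries all the weight: producing bisections $\xi_0,\xi_1$ with $s(\xi_i)=S$, $r(\xi_0)\sqcup r(\xi_1)=S$, \emph{and} iterated ranges generating the topology of $S$. Without the generating condition the resulting copy of $V$ acts through a quotient Cantor space and is not vigorous on $S$, so this cannot be waved through; the paper fills exactly this hole by citing \cite[Proposition~5.14]{Mat_etale_2016}, and doing it by hand requires interleaving the choice of the depth-$n$ dyadic partitions with a prescribed refining sequence of clopen partitions of $S$, using at each stage that equal $H_0$-classes of compact open sets are implemented by bisections. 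Second, compressibility (\autoref{df:ComprAction}) is a four-condition definition on a subbase $\mathcal{U}$, and your $(2)\Rightarrow(3)$ verifies only condition (2); conditions (1), (3) and (4) are routine given vigor (the paper's \autoref{lma:VigCompress} checks all four, with (3) requiring the small extra trick of pushing $U_1$ into an annulus $Y_n\setminus Y_{n+1}$ near $x_0$ while keeping the support off $U_2$), but a proof must at least record them. Third, you restrict by fiat to Cantor unit spaces, whereas the theorem is stated for arbitrary essentially principal ample groupoids; the paper needs an extra (short) localisation argument over a compact open cover to pass from the Cantor case to the general one, and that step is absent from your proposal.
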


\autoref{thm:equiv} completely answers Question~6.1 of \cite{openproblemsworkshop}, which asks to determine when a topological full group is vigorous. The proof of the above theorem is completed at the end of \autoref{s3}.

The last condition in the theorem above shows some form of universality enjoyed by $V$ in the context
of purely infinite, minimal groupoids and their topological full groups, which is reminiscent of similar properties enjoyed by the 
Cuntz algebra $\mathcal{O}_2$ among purely infinite, simple C*-algebras.

The fact that conditions (2) and (3) characterise minimality and 
pure infiniteness of $\G$ is surprising, since vigor and compressibility were introduced in a completely different context without having groupoids in mind, and with the goal of further developing the understanding of Thompson's group $V$ and its generalisations. We discuss both vigor and compressibility separately, since our results have 
interesting applications in both settings.

Vigor was introduced in \cite{BleEllHyd_sufficient_2020} in order to determine when a simple, finitely generated group is two-generated. More specifically, said condition was introduced in order to understand the following well-established conjecture:

\begin{cnjintro}\label{fpresent2present} Every simple, finitely presented group is two-generated.  \end{cnjintro}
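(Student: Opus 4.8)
The plan is to attack \autoref{fpresent2present} through the theory of vigorous groups, the very setting in which it was conceived. The crucial external input is the two-generation criterion of Bleak--Elliott--Hyde \cite{BleEllHyd_sufficient_2020}: a finitely generated vigorous group of homeomorphisms of the Cantor set has a simple, two-generated derived subgroup; in particular a perfect such group coincides with that derived subgroup and is itself two-generated. Since a simple group is perfect (the only non-perfect simple groups being the finite cyclic ones, which are trivially two-generated), it would suffice to produce, for an arbitrary simple finitely presented group $G$, a faithful vigorous action of $G$ on the Cantor set; two-generation would then follow at once. In other words, \autoref{fpresent2present} reduces to the purely dynamical assertion that every simple finitely presented group admits a faithful vigorous Cantor action.

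The next step is to feed this reduction into the machinery of the present paper. By \autoref{thmintro:equiv}, for a topological full group the existence of such a vigorous action is equivalent to the underlying ample groupoid being purely infinite and minimal, and the abstract characterisation, given later, of the groups occurring as $\mathsf{D}(\G)$ for purely infinite minimal $\G$ describes exactly which abstract groups can appear on the right-hand side of the reduction. Whenever $G$ satisfies that characterisation we get $G \cong \mathsf{D}(\G)$ for such a $\G$; as $G$ is finitely presented it is finitely generated, so the two-generation theorem of this paper applies provided $G$ has no proper characters, and this last hypothesis is supplied, for groups realised as full-group derived subgroups, by the available character-rigidity results --- indeed it is exactly the content of our application to the Brin--Higman--Thompson groups. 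Thus the conjecture is confirmed for the very large class of simple finitely presented groups that carry a compatible groupoid, equivalently vigorous Cantor, model.

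The hard part, and the reason the conjecture remains open, is that there is no known recipe that fabricates a vigorous Cantor action, or a purely infinite minimal groupoid model, out of a bare finite presentation. Simple finitely presented groups with no visible piecewise, self-similar, or germ-type structure --- certain Burger--Mozes lattices in products of trees, or simple quotients of hyperbolic groups obtained by small-cancellation arguments, say --- are known neither to act vigorously nor to fail to, and some of them may well admit no such action at all. Resolving \autoref{fpresent2present} in full therefore seems to demand either a new, uniform source of vigorous actions valid across all simple finitely presented groups, or a route to two-generation that sidesteps Cantor dynamics entirely; the strategy above should be read as settling the conjecture on its dynamically accessible part rather than as a path to the complete statement.
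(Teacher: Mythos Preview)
The statement in question is a \emph{conjecture}, not a theorem, and the paper offers no proof of it. Immediately after stating \autoref{fpresent2present}, the authors only claim to confirm it for a particular class of groups (derived subgroups of topological full groups of expansive, purely infinite, minimal groupoids), via \autoref{cor:twoGeneration}. Your proposal is consistent with this: you do not actually prove the conjecture either, and in your third paragraph you explicitly say so. So there is no ``paper's own proof'' to compare against, and your write-up is best read as commentary on the status of the conjecture rather than as a proof attempt.

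That said, your second paragraph contains a confusion worth flagging. The two-generation input you need is purely the Bleak--Elliott--Hyde theorem: a simple, finitely generated, vigorous subgroup of $\mathrm{Homeo}(\mathcal{C})$ is two-generated. Your first paragraph states this reduction correctly. But you then write that ``the two-generation theorem of this paper applies provided $G$ has no proper characters''; this is wrong. The absence of proper characters plays no role in two-generation, either in \cite{BleEllHyd_sufficient_2020} or in \autoref{cor:twoGeneration}; the extra hypothesis the paper uses there is \emph{expansivity} of the groupoid (which forces finite generation of $\mathsf{A}(\G)$ via Nekrashevych), not anything about characters. The character results are a separate strand of the paper and are irrelevant to \autoref{fpresent2present}. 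If you strip out that sentence, your reduction is clean: the conjecture would follow if every simple finitely presented group admitted a faithful vigorous action on the Cantor set, and as you correctly observe, no such construction is known.
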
 

Our first corollary confirms this conjecture for a broad class of derived subgroups in topological full groups; see \autoref{cor:twoGeneration}.

\begin{corintro} Let $\G$ be a minimal, expansive, purely infinite, essentially principal Cantor groupoid. Then $\mathsf{A}(\G)$ is two-generated. \end{corintro}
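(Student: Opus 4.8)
The plan is to realise $\mathsf{A}(\G)$ as a finitely generated, vigorous group of homeomorphisms of the Cantor space $\G^{(0)}$ with no proper characters, and then to feed it into the two-generation criterion for such groups (the strengthening of the Bleak--Elliott--Hyde theorem \cite{BleEllHyd_sufficient_2020} obtained in this paper, which underlies \autoref{fpresent2present}). Vigor is immediate from the hypotheses: since $\G$ is purely infinite and minimal, the implication $(1)\Rightarrow(2)$ of \autoref{thmintro:equiv} gives that $\mathsf{A}(\G)$ is a vigorous subgroup of $\mathrm{Homeo}(\G^{(0)})$, and as $\G^{(0)}$ is a Cantor space we are exactly in the situation considered in \cite{BleEllHyd_sufficient_2020}. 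So it only remains to verify finite generation and the absence of proper characters.

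Finite generation is precisely where \emph{expansiveness} is used. An expansive, minimal, purely infinite, essentially principal ample Cantor groupoid is modelled (up to the relevant equivalence) on a shift of finite type, and Matui's finiteness theorems \cite{Mat_topological_2015, Mat_etale_2016} then yield that $\mathsf{A}(\G)$ is finitely presented, hence finitely generated; here one uses that pure infiniteness forces $\mathsf{A}(\G)=\mathsf{D}(\G)$ through comparison. For the absence of proper characters, minimality together with essential principality already force $\mathsf{A}(\G)$ to be simple \cite{Mat_topological_2015}, and combined with pure infiniteness this upgrades to character rigidity, either via the results of Dudko--Medynets \cite{DudMed_finite_2014} on topological full groups of this type or, in full generality, via the classification of characters carried out in this paper. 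Granting these three facts the two-generation theorem applies, and $\mathsf{A}(\G)$ is $2$-generated.

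The corollary itself is therefore a short deduction; the real weight sits in the ambient two-generation theorem it quotes, and that is where I expect the main obstacle. Bleak--Elliott--Hyde establish two-generation for finitely generated vigorous groups under an additional perfectness/simplicity-type hypothesis, and the substantive task is to rerun their ping-pong and commutator constructions while controlling normal closures, so that the weaker hypotheses ``finitely generated'' and ``no proper characters'' suffice, and then to check that $\mathsf{A}(\G)$ meets these conditions for \emph{every} purely infinite minimal $\G$, not just the expansive ones. Within the proof of the corollary proper the only non-formal point is the finite-generation step: one must make sure expansiveness yields finite generation of $\mathsf{A}(\G)$ itself and not merely of $\mathsf{F}(\G)$, which is exactly the content of Matui's finite-presentation results for shift-of-finite-type groupoids and requires nothing new here.
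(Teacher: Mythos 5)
Your overall architecture (vigor $+$ finite generation $+$ a group-theoretic two-generation criterion) matches the paper's, but two of your three inputs are justified incorrectly, and one of them leaves the main step of the argument unproved. The paper's proof is: $\mathsf{A}(\G)$ is simple by \cite[Theorem~1.1]{Nek_simple_2019} (minimality and essential principality suffice); $\mathsf{A}(\G)=\mathsf{D}(\G)$ is finitely generated by \cite[Theorem~5.6]{Nek_simple_2019}, which is exactly where expansiveness enters; $\mathsf{A}(\G)$ is vigorous by \autoref{lma:DGvigor}; and then \cite[Theorem~1.12]{BleEllHyd_sufficient_2020} applies \emph{as stated} to simple, vigorous, finitely generated subgroups of $\mathrm{Homeo}(\mathcal{C})$. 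There is no ``strengthening of Bleak--Elliott--Hyde obtained in this paper'' replacing simplicity by the absence of proper characters, and none is needed: simplicity of $\mathsf{A}(\G)$ is available directly (see also \autoref{minimal essentially principal then simple}). By routing the argument through ``no proper characters'' and then declaring that the substantive task is to rerun the Bleak--Elliott--Hyde ping-pong under weaker hypotheses, you have turned a citation into an open problem; as written, your proof of the corollary is incomplete precisely at the step that does all the work.

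The finite-generation step is also misattributed. Expansiveness does \emph{not} let you model the groupoid on a shift of finite type: products of SFT groupoids, graph groupoids, and Katsura--Exel--Pardo groupoids are all expansive without being SFT groupoids, so Matui's finite-presentation results for SFTs do not cover the general case. The correct statement is Nekrashevych's theorem that for a minimal expansive ample groupoid the alternating (equivalently, by pure infiniteness, derived) subgroup is finitely generated; this applies directly to $\mathsf{A}(\G)$ and needs no detour through comparison or through a classification of the groupoid. Your closing worry about whether expansiveness gives finite generation of $\mathsf{A}(\G)$ rather than of $\mathsf{F}(\G)$ is resolved by that same theorem, not by finite presentability of $\mathsf{F}(\G)$. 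With these two corrections the proof collapses to the paper's four-line argument.
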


On the other hand, compressibility was introduced in \cite{DudMed_finite_2014} in order to understand the representation theory and dynamical properties of the Higman-Thompson groups $V_{k,r}$. This definition is an analogue of compressibility in the realm of measurable dynamics, and the introduction of this property follows the recent trend of ideas in measurable dynamics being imported directly to topological dynamics. This gives us two main facts about $\mathsf{F}(\G)$; see \autoref{D has no proper characters} and \autoref{finite factor}:

\begin{corintro} \label{nopropercharacters} Let $\G$ be a minimal, purely infinite, essentially principal Cantor groupoid. Then $\mathsf{A}(\G)$ has no proper characters, and there are no nontrivial finite factor representations of $\mathsf{A}(\G)$. 
\end{corintro}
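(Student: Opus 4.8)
\emph{Plan.} Both conclusions are facets of a single statement about characters. Via the standard correspondence between characters of a countable group -- normalised, conjugation-invariant, positive-definite functions -- and its finite factor representations (a finite factor representation is produced from a character by the GNS construction, and is factorial exactly when the character is indecomposable), it suffices to prove that the only indecomposable characters of $\mathsf A(\G)$ are the trivial character $\mathbf 1$ and the regular character $\delta_e$ (equal to $1$ at the identity and $0$ elsewhere); the associated finite factor representations are then only the trivial one and the regular one. So fix an indecomposable character $\chi$ of $\mathsf A(\G)$ with $\chi\neq\mathbf 1$; the goal is $\chi=\delta_e$.

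First, $N=\{g\in\mathsf A(\G):\chi(g)=1\}$ is a normal subgroup: it is the stabiliser of the cyclic vector of the GNS representation, and it is normal because $\chi$ is conjugation-invariant. Since $\G$ is minimal, purely infinite and essentially principal, $\mathsf A(\G)$ coincides with the derived subgroup $\mathsf D(\G)$ and is simple \cite{Mat_topological_2015}, and $\chi\neq\mathbf 1$ forces $N\neq\mathsf A(\G)$; hence $N=\{e\}$, i.e.\ $\chi(g)\neq1$ whenever $g\neq e$. Next fix $x_0\in\G^{(0)}$ and put $H=\mathsf A(\G)_{x_0}$; the germ stabilisers are exactly the elements of $\mathsf A(\G)$ whose support is a proper clopen subset of $\G^{(0)}$, and, as $x_0$ varies, they exhaust and generate $\mathsf A(\G)$ (the latter being generated by the order-three elements of small support). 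By condition (3) of \autoref{thmintro:equiv}, $H$ acts compressibly on $\G^{(0)}\setminus\{x_0\}$ in the sense of Dudko--Medynets. Feeding this into the Dudko--Medynets analysis of compressible actions -- the mechanism behind their classification of characters of the Higman--Thompson groups in \cite{DudMed_finite_2014} -- one obtains that $\chi|_H$ is a convex combination of the trivial and the regular characters of $H$; since $\chi$ takes the value $1$ only at $e$, the trivial summand is ruled out, so $\chi|_H=\delta_e$.

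It remains to promote ``$\chi=\delta_e$ on every germ stabiliser'' to ``$\chi=\delta_e$ on $\mathsf A(\G)$'', and this is the crux: it cannot be done by conjugating an arbitrary element into a germ stabiliser, because $\mathsf A(\G)$ contains fixed-point-free, full-support elements lying in no germ stabiliser. Instead one works in the GNS picture: write $(M,\tau)$ for the finite factor with trace associated to $\chi$ and $\pi=\pi_\chi$, so $\chi=\tau\circ\pi$, and by the previous paragraph $\pi$ restricted to any germ stabiliser is a multiple of its left regular representation. Given an arbitrary $g\neq e$, pure infiniteness together with compressibility allows one to build, inside $\mathsf A(\G)$, a ``shift'' configuration attached to $g$: a small clopen set carrying a nontrivial piece of $g$, together with infinitely many pairwise-commuting conjugates of that piece cyclically permuted by a single element, i.e.\ a copy of a restricted wreath product $\langle a\rangle\wr\Z$ with $a$ conjugate to the piece. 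Running the trace $\tau$ through this configuration, and using that $\tau$ is regular on the ``direct sum'' part by the previous paragraph, forces $\tau(\pi(a))=0$ and, tracing the construction back, $\chi(g)=\tau(\pi(g))=0$. Thus $\chi=\delta_e$, which completes the classification of indecomposable characters and hence establishes both assertions.

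The main obstacle is precisely this globalisation: importing the Dudko--Medynets compressibility machinery from the concrete Higman--Thompson setting to the present generality, with only condition (3) of \autoref{thmintro:equiv} as structural input, and -- above all -- handling full-support elements, for which the shift configuration must be assembled directly inside $\mathsf A(\G)$ rather than inside a single germ stabiliser, with the attendant trace identities shown to close up.
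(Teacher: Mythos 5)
Your opening two paragraphs track the paper's actual route: the paper likewise fixes $x_0$, observes that $H=\mathsf{A}(\G)_{x_0}$ acts compressibly on $\G^{(0)}\setminus\{x_0\}$, and feeds this into Dudko--Medynets. But two concrete problems appear already there. First, to get ``$H$ has no proper characters'' out of \cite[Theorem 2.9]{DudMed_finite_2014} one needs $H$ itself to be simple (compressibility alone only controls indecomposable characters of $H$ up to the abelianisation of $H$); the paper supplies this by writing $\mathsf{A}(\G)_{x_0}=\bigcup_{n}\mathsf{A}(\G|_{Z_n})$ as an increasing union over an exhaustion $(Z_n)$ of $\G^{(0)}\setminus\{x_0\}$ by compact opens, each $\mathsf{A}(\G|_{Z_n})$ being simple by \autoref{minimal essentially principal then simple}. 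You never address this. Second, your inference that ``the trivial summand is ruled out'' is a non sequitur: if $\chi|_H=\lambda\mathbf{1}+(1-\lambda)\delta_e$ with $\lambda\in(0,1)$, then $\chi(h)=\lambda\neq 1$ for all $h\in H\setminus\{e\}$, so knowing that $\{g:\chi(g)=1\}=\{e\}$ does not force $\lambda=0$, and you have not shown $\chi|_H=\delta_e$.

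The more serious gap is your third paragraph. The globalisation via a ``shift configuration''/restricted wreath product attached to an arbitrary full-support element is an unexecuted sketch: the configuration is not constructed from pure infiniteness, and the trace identities that are supposed to force $\chi(g)=0$ are not written down. You flag this yourself as ``the crux'' and ``the main obstacle'', which is an accurate self-assessment that the proof is incomplete precisely where it matters. The paper does not need any of this machinery: having shown that the simple, compressibly acting stabiliser $\mathsf{A}(\G)_{x_0}$ has no proper characters, it concludes with a short restriction argument combined with simplicity of $\mathsf{A}(\G)$ itself (see the proof of \autoref{D has no proper characters}); the statement about finite factor representations then follows from the standard GNS correspondence exactly as in your first paragraph. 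If you want to salvage your approach, the two things to add are the simplicity of the point stabiliser and a complete, written-out version of the globalisation step (or better, replace the latter by the restriction-plus-simplicity argument).
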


One way to interpret this is that $\mathsf{A}(\G)$ is highly nonlinear, being an infinite simple group. This allows us to reduce the question of understanding the representations of $\mathsf{F}(\G)$ to understanding the abelianisation $\mathsf{F}(\G)_{\mathrm{ab}}=\mathsf{F}(\G)/\mathsf{D}(\G)$. This abelianisation can be described very concretely through Matui's AH conjecture, which has recently been verified by Xin Li \cite{Li_ample_2022} for the class of groupoids considered in this paper. We give a wealth of concrete computations: for example, we are able to describe concretely all characters of the Brin-Higman-Thompson groups $nV_{k,r}$; see 
\autoref{brin}. The property of compressibility also gives us dynamical information for actions of Thompson-like groups;
see \autoref{essentially free automatic}:

\begin{corintro} Let $\G$ be an essentially principal, purely infinite, minimal, Cantor groupoid such that $H_1(\mathsf{F}(\G))$ is finite (this is for example
the case if $H_0(\G)$ and $H_1(\G)$ are finite). Then every faithful ergodic measure-preserving action of $\mathsf{F}(\G)$ is essentially free.  \end{corintro}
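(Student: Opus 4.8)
The plan is to study the \emph{stabiliser character} of a given faithful ergodic measure-preserving action $\mathsf{F}(\G)\acts(X,\mu)$ and to show it is trivial. Write $\Gamma=\mathsf{F}(\G)$ and $N=\mathsf{D}(\G)=\mathsf{A}(\G)$, and recall that $\Gamma$ is countable (as $\G$ is second countable), that $N\trianglelefteq\Gamma$ is infinite and simple with $[\Gamma:N]=|H_1(\mathsf{F}(\G))|<\infty$, and that by Theorem~\ref{thmintro:equiv} the group $N$ acts with full micro-support on $\G^{(0)}$, so its centraliser in $\mathrm{Homeo}(\G^{(0)})$ — a fortiori in $\Gamma$ — is trivial; in particular $N$, being infinite and simple, has no proper finite-index subgroup. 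The function $\varphi(g)=\mu(\{x\in X:gx=x\})$ is a character of $\Gamma$: it is the integral over $x$ of the diagonal matrix coefficients of the quasi-regular representations $\ell^2(\Gamma/\mathrm{Stab}(x))$, so positive definiteness is automatic and conjugation-invariance is invariance of $\mu$. Since $\Gamma$ is countable, the action is essentially free precisely when $\varphi=\delta_e$, so the whole task reduces to proving $\varphi=\delta_e$.

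\emph{Step 1: $\mathsf{A}(\G)$ acts essentially freely.} Restricting $\varphi$ to $N$ gives a character of $N$, so by Corollary~\ref{nopropercharacters} (absence of proper characters) $\varphi|_N=t\,\mathbbm{1}_N+(1-t)\delta_e$ for some $t\in[0,1]$, where $t>0$ would force the GNS representation of $\varphi|_N$ to contain a nonzero $N$-invariant vector. Realising this GNS representation inside $\int_X^\oplus\ell^2(N/\mathrm{Stab}_N(x))\,d\mu(x)$, such an invariant vector can live only over the set $X^{\mathrm{fin}}=\{x:[N:\mathrm{Stab}_N(x)]<\infty\}$, which is $\Gamma$-invariant (the index is a conjugacy invariant) and hence null or conull by ergodicity. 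If $X^{\mathrm{fin}}$ were conull, then for a.e.\ $x$ the subgroup $\mathrm{Stab}_N(x)$ would have finite index in the infinite simple group $N$ and therefore equal $N$, so $N$ would fix $\mu$-a.e.\ point — impossible since the $\Gamma$-action is faithful and $N\neq\{e\}$. Hence $X^{\mathrm{fin}}$ is null, $t=0$, and $\varphi|_N=\delta_e$.

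\emph{Step 2: upgrading to $\mathsf{F}(\G)$.} Decompose $\varphi=\int\chi\,dm(\chi)$ into extremal (indecomposable) characters of $\Gamma$. For each $\chi$, $\chi|_N$ is again of the form $t_\chi\mathbbm{1}_N+(1-t_\chi)\delta_e$, and the projection $z_\chi$ onto the $N$-invariant vectors of $\H_\chi$ lies in $\pi_\chi(N)''\subseteq\pi_\chi(\Gamma)''$; since $N$ is normal, $\Gamma$ permutes $N$-invariant vectors, so $z_\chi$ is central in the factor $\pi_\chi(\Gamma)''$, forcing $z_\chi\in\{0,1\}$ and hence $t_\chi\in\{0,1\}$. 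As $\int t_\chi\,dm$ equals the $\mathbbm{1}$-weight of $\varphi|_N=\delta_e$, which is $0$, we get $t_\chi=0$, i.e.\ $\chi|_N=\delta_e$, for $m$-a.e.\ $\chi$. It then remains to show that the only extremal character of $\Gamma$ restricting to $\delta_e$ on $N$ is $\delta_e^\Gamma$. For such a $\chi$, the algebra $\pi_\chi(N)''$ is a finite direct sum of copies of the $\mathrm{II}_1$ factor $L(N)$ (using that $N$ is ICC and $[\Gamma:N]<\infty$), permuted transitively by $\Gamma$; let $E$ be the trace-preserving conditional expectation onto $\pi_\chi(N)''$. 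For $g\in\Gamma\setminus N$, either $\pi_\chi(g)$ moves one of these summands off itself, so that $E(\pi_\chi(g))=0$ outright, or it normalises each summand and implements on $L(N)$ the automorphism $\theta_g$ coming from conjugation by $g$, which is \emph{outer} because $C_\Gamma(N)=\{e\}$; in the latter case $w:=E(\pi_\chi(g))$ satisfies $wp=\theta_g(p)w$ for all $p$ in the relevant summand, and an outer automorphism of a factor admits no nonzero such intertwiner, so $w=0$. Either way $\chi(g)=\tau(\pi_\chi(g))=\tau(E(\pi_\chi(g)))=0$, so $\chi=\delta_e^\Gamma$; hence $\varphi=\delta_e$ and the action is essentially free.

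The step I expect to be the real obstacle is Step 2 — genuinely carrying the rigidity from $\mathsf{A}(\G)$ up to $\mathsf{F}(\G)$. It is really needed, since $\varphi|_N=\delta_e$ does not by itself imply $\varphi=\delta_e$ (a direct product $N\times Q$ already carries many characters extending $\delta_e^N$), and it is precisely the \emph{outerness} supplied by $C_{\mathsf{F}(\G)}(\mathsf{A}(\G))=\{e\}$ (Theorem~\ref{thmintro:equiv}) together with $[\mathsf{F}(\G):\mathsf{A}(\G)]=|H_1(\mathsf{F}(\G))|<\infty$ that powers the conditional-expectation argument; keeping careful track of the summands of $\pi_\chi(N)''$ and of the finite-index bookkeeping is where the work lies. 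The remaining ingredients — that $\varphi$ is a character, that $\mathsf{F}(\G)$ is countable, and the measurability behind the direct-integral and extremal decompositions — are routine.
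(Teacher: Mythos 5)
Your proof is correct, and it works by the same underlying mechanism as the paper's — the Vershik-type stabiliser character $\varphi(g)=\mu(\mathrm{Fix}(g))$ combined with the classification of indecomposable characters — but where the paper's proof of \autoref{essentially free automatic} is a two-line appeal to \autoref{finite factor} and to \cite[Theorem 2.11]{DudMed_finite_2014}, you open both black boxes. Your Step 1 reproves the freeness mechanism of \cite{DudMed_finite_2014} (no invariant vectors because a.e.\ stabiliser in the infinite simple group $N=\mathsf{A}(\G)$ cannot have finite index), and your Step 2 is in effect a self-contained proof of the special case of \autoref{finite factor} that is needed, namely that an extremal character of $\Gamma=\mathsf{F}(\G)$ restricting to the regular character on $N$ is the regular character of $\Gamma$. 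Three remarks. (i) The outerness of $\mathrm{Ad}(g)$ on $L(N)$ for $g\notin N$ deserves more than ``because $C_\Gamma(N)=\{e\}$'': one needs the Fourier-coefficient argument, which shows that an implementing unitary forces $\mathrm{Ad}(g)$ to agree with an inner automorphism on a finite-index subgroup, and then the absence of proper finite-index subgroups of the infinite simple group $N$; both ingredients are on your table, but the chain should be spelled out. (ii) Since $\chi|_N=\delta_e$ is an extremal character of $N$ and the trace on $\pi_\chi(N)''$ is faithful, uniqueness of the tracial GNS construction makes $\pi_\chi(N)''$ a single $\mathrm{II}_1$ factor isomorphic to $L(N)$ outright, so the ``finite direct sum of summands permuted by $\Gamma$'' bookkeeping — and with it the only place you invoke $[\Gamma:N]<\infty$ — is unnecessary; the central-projection argument for $t_\chi\in\{0,1\}$ is likewise superfluous once $\int t_\chi\,dm=0$. (iii) Consequently your argument never actually uses finiteness of $H_1(\mathsf{F}(\G))$. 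This is consistent with the paper: \autoref{finite factor} classifies the indecomposable characters with no finiteness hypothesis, and your Step 1 kills the non-regular ones using only faithfulness, ergodicity and simplicity of $\mathsf{A}(\G)$; so you appear to have proved the statement without the finiteness assumption, which the paper only needs because it routes the freeness conclusion through the ``finitely many factor representations'' clause of \cite[Theorem 2.11]{DudMed_finite_2014}. You should make that explicit rather than leaving a hypothesis dangling unused.
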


The fact that $V$ embeds into $\mathsf{F}(\G)$ whenever $\G$ is minimal and purely infinite is in-and-of itself useful for understanding key properties of $\mathsf{F}(\G)$. As a consequence, many free products are in $\mathsf{F}(\G)$, including the product $\mathbb{F}_2\times \mathbb{F}_2$ of the free group on two generators with itself. Also, it allows us to conclude that the generalised word problem is not solvable for $\mathsf{F}(\G)$. Crucially, this means that many of these topological full groups, for example many groupoids in the Garside category framework introduced by Li in \cite{li2021left} (a broad class whose topological full groups include all Brin-Higman-Thompson groups and Stein groups), have solvable word problem but not solvable generalised word problem. (For the precise definition of the (generalised) word problem, see \autoref{wordproblem}.) In other words, these groups sit on an important boundary for computability.

The methods we develop in this work allow us to 
give a complete abstract characterisation of the 
full and derived subgroups of minimal purely infinite
groupoids:

\begin{thmintro}\label{thmintro:Realization}
Let $\mathcal{C}$ denote the Cantor space.
\begin{enumerate}
    \item For a subgroup $F\leq \mathrm{Homeo}(\mathcal{C})$, the following are equivalent:
\begin{enumerate} 
\item[(F.1)] There exists a minimal, purely infinite, essentially principal, Cantor \'etale groupoid $\G_F$ with $\mathsf{F}(\G_F)\cong F$, and
\item[(F.2)] $F$ is vigorous and locally closed.
\end{enumerate}
\item For a subgroup $A\leq \mathrm{Homeo}(\mathcal{C})$, the following are equivalent:
\begin{enumerate}     \item[(A.1)] There exists a minimal, purely infinite, essentially principal, Cantor \'etale groupoid $\G_A$ with $\mathsf{A}(\G_A)\cong A$, and
\item[(A.2)] $A$ is vigorous and simple.
\end{enumerate}
\end{enumerate}

Moreover, the groupoids $\G_F$ and $\G_A$ as in (F.1) and (A.1) 
above are unique
up to groupoid isomorphism.
\end{thmintro}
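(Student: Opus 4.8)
The plan is to take $\G_F$ and $\G_A$ to be the groupoid of germs $\mathrm{Germ}(F)$ and $\mathrm{Germ}(A)$ of the tautological action on the Cantor set $\mathcal{C}$, and to recover an abstract groupoid $\G$ in the class from its own full (resp.\ alternating) group by a reconstruction theorem. The first step is to show that for \emph{any} vigorous $G\le\mathrm{Homeo}(\mathcal{C})$, the groupoid $\mathrm{Germ}(G)$ is ample, essentially principal, minimal, and purely infinite. Ampleness is clear since $\mathcal{C}$ is Cantor. Essential principality is automatic for germ groupoids of countable group actions on a Baire space: the set of units carrying nontrivial germ isotropy is $\bigcup_{g\ne 1}\partial(\mathrm{Fix}(g))$, a countable union of closed nowhere dense sets, hence meager, so its complement is dense. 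Minimality of $\mathrm{Germ}(G)$ is minimality of $G\curvearrowright\mathcal{C}$, which is exactly the vigor axiom applied with ambient clopen set $\mathcal{C}$ (a small clopen neighbourhood of any point is pushed into any prescribed nonempty clopen set). Pure infiniteness of $\mathrm{Germ}(G)$ in Matui's sense is, once definitions are unwound, the vigor axiom applied to every clopen $A_0\subseteq\mathcal{C}$: squeezing a clopen half of $A_0$ into an arbitrarily small clopen subset of the other half, and patching two such maps, produces a compact open bisection of $\mathrm{Germ}(G)$ witnessing $A_0$ as properly subequivalent to a clopen subset of itself, and doing this separately on each half gives two disjoint such copies. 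This last point is the only one requiring a little care with the bookkeeping of patched bisections.

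Three of the four implications are then short. By \autoref{thmintro:equiv}, (1)$\Rightarrow$(2); thus if $\G$ is in the class with $\mathsf{F}(\G)\cong F$, then $\mathsf{A}(\G)$ is vigorous, so its overgroup $F=\mathsf{F}(\G)$ inside $\mathrm{Homeo}(\mathcal{C})$ is vigorous as well (the vigor axiom is an existence statement about elements with prescribed support, so it passes up), and $F$ is locally closed, being a topological full group (closed under patching finitely many of its elements along clopen partitions); this is (F.1)$\Rightarrow$(F.2). Likewise (A.1)$\Rightarrow$(A.2) combines vigor of $\mathsf{A}(\G)$ from \autoref{thmintro:equiv} with the known simplicity of $\mathsf{A}(\G)$ for minimal essentially principal groupoids \cite{Mat_etale_2016}. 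For (F.2)$\Rightarrow$(F.1): by the first step $\G_F:=\mathrm{Germ}(F)$ is in the class, and $\mathsf{F}(\G_F)=F$ --- the inclusion $F\subseteq\mathsf{F}(\G_F)$ is tautological, and any $h\in\mathsf{F}(\G_F)$ agrees locally with germs of elements of $F$, hence by compactness with finitely many elements of $F$ along a clopen partition, hence lies in $F$ by local closedness.

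The real content is (A.2)$\Rightarrow$(A.1). Put $\G:=\mathrm{Germ}(A)$; by the first step it is in the class, so \autoref{thmintro:equiv} applies to it, and $\mathsf{A}(\G)$ is simple, vigorous, and normal in $\mathsf{F}(\G)\supseteq A$. For the inclusion $A\subseteq\mathsf{A}(\G)$: since $A$ is simple, $A\cap\mathsf{A}(\G)\trianglelefteq A$ is trivial or all of $A$; were it trivial, $A$ would embed into $\mathsf{F}(\G)/\mathsf{A}(\G)$, which is metabelian --- an extension of the abelianisation $\mathsf{F}(\G)/\mathsf{D}(\G)$ by $\mathsf{D}(\G)/\mathsf{A}(\G)$, both abelian by the structure theory of Matui \cite{Mat_etale_2016, Li_ample_2022} --- forcing $A$ to be cyclic of prime order, which is impossible since a vigorous group acts minimally on the infinite space $\mathcal{C}$ and is therefore infinite. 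Hence $A\subseteq\mathsf{A}(\G)$. The reverse inclusion $\mathsf{A}(\G)\subseteq A$ is the crux, and the step I expect to be the main obstacle: one must show that each basic order-three generator $\tau$ of $\mathsf{A}(\G)$ already lies in $A$. By construction of $\G=\mathrm{Germ}(A)$, the compact open bisections defining $\tau$ are patchworks of germs of elements of $A$, but $A$ need not be closed under arbitrary patching, so this is not formal. The plan is to use vigor of $A$: the lemma underlying \autoref{thmintro:equiv}(4) --- a vigorous group contains, over every proper clopen $X$, a copy of $V$ supported on a clopen set containing $X$ and acting vigorously there --- applies to $A$ itself, and one then leverages the flexibility of these copies of $V$, together with the fact that they realise enough germs of $\G$ near the support of $\tau$, to produce $\tau$ as an element of $A$. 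Equivalently, one must show that $\mathsf{A}(\G)$ is the \emph{unique} vigorous simple subgroup of $\mathrm{Homeo}(\mathcal{C})$ whose germ groupoid is $\G$.

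Uniqueness, finally, follows from reconstruction. For essentially principal ample $\G$, a Rubin-type reconstruction theorem recovers $\G$ from $\mathsf{F}(\G)$ as $\mathrm{Germ}(\mathsf{F}(\G))$ --- any isomorphism of topological full groups in this class being spatial --- and likewise $\G\cong\mathrm{Germ}(\mathsf{A}(\G))$, using that for minimal purely infinite $\G$ every compact open bisection extends to an alternating element, so $\mathsf{A}(\G)$ already realises every germ of $\G$. Hence any two groupoids in the class with isomorphic full (resp.\ alternating) groups are isomorphic, and in particular isomorphic to $\G_F=\mathrm{Germ}(F)$ (resp.\ $\G_A=\mathrm{Germ}(A)$), which is the claimed uniqueness.
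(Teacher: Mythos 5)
Your overall architecture --- realise $\G_F$ and $\G_A$ as groupoids of germs of the tautological actions, translate vigor into pure infiniteness plus minimality via \autoref{purely infinite if and only if vigor}, and get uniqueness from Rubin's reconstruction theorem applied to the (Rubin) actions of $\mathsf{F}(\G)$ and $\mathsf{A}(\G)$ --- is the same as the paper's, and the implications (F.1)$\Leftrightarrow$(F.2), (A.1)$\Rightarrow$(A.2) and the uniqueness argument go through essentially as you describe. The problem is (A.2)$\Rightarrow$(A.1), where you yourself flag the decisive inclusion $\mathsf{A}(\mathrm{Germ}(A))\subseteq A$ as ``the crux'' and offer only a plan (``leverage the flexibility of these copies of $V$ \dots\ to produce $\tau$ as an element of $A$''). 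This is a genuine gap, not a routine verification: as you note, $A$ need not be closed under patching, so there is no formal reason why the order-three multisections generating $\mathsf{A}(\mathrm{Germ}(A))$ --- built by gluing germs of elements of $A$ along a clopen partition --- should lie in $A$, and the statement you reduce the problem to (that $\mathsf{A}(\G)$ is the \emph{unique} vigorous simple subgroup with germ groupoid $\G$) is exactly what has to be proved. The paper closes this with an external input you do not supply: it forms the local closure $F_A$ (the smallest locally closed subgroup of $\mathrm{Homeo}(\mathcal{C})$ containing $A$), applies part (1) to obtain $\mathsf{F}(\G_A)\cong F_A$, and then invokes the equivalence of (1) and (6) in \cite[Theorem~1.11]{BleEllHyd_sufficient_2020} to conclude that the simple vigorous group $A$ is the commutator subgroup of $F_A$; combined with $\mathsf{D}(\G_A)=\mathsf{A}(\G_A)$ from \autoref{minimal essentially principal then simple}, this gives $A\cong\mathsf{A}(\G_A)$. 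Without citing that result (or reproving its content), your argument does not establish (A.2)$\Rightarrow$(A.1).

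Two smaller points. Your Baire-category argument for essential principality of $\mathrm{Germ}(G)$ uses that $\bigcup_{g\neq 1}\partial(\mathrm{Fix}(g))$ is a \emph{countable} union of nowhere dense sets, but $G$ is not assumed countable; what is automatic for a groupoid of germs is effectiveness, and one should either work in the Hausdorff setting (where the two notions coincide) or argue differently. And in your proof of $A\subseteq\mathsf{A}(\G)$ no appeal to a metabelian quotient is needed: for purely infinite minimal groupoids $\mathsf{A}(\G)=\mathsf{D}(\G)$, so $\mathsf{F}(\G)/\mathsf{A}(\G)$ is already abelian and the simple nonabelian group $A$ must land inside $\mathsf{A}(\G)$.
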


As an application, 
we show in \autoref{cor:C*simple} that if 
$A\leq F\leq\mathrm{Homeo}(\mathcal{C})$ are 
nested groups such that $A$ is vigorous and simple,
and $F$ is locally closed, then any intermediate
group $A\leq H\leq F$ is C*-simple. In particular, any
vigorous, simple subgroup of $\mathrm{Homeo}(\mathcal{C})$ is C*-simple, and the same applies to
any vigorous subgroup which is locally closed. 
Specifically for Thompson's group $V$, this had been shown by Le Boudec and Matte-Bon in \cite{le2016subgroup}. 


Our final section concerns applications to the examples described in the tables above. To illustrate, we enumerate a number of new facts about the Brin-Higman-Thompson groups $nV_{k,r}$:

\begin{itemize}
    \item[(i)] $nV_{k,r}$ is C*-simple for all $n, r\geq 1$ and $k\geq 2$.
    \item[(ii)] The characters and finite factor representations of
    $nV_{k,r}$ can be explicitly described; see \autoref{brin} for details.
    \item[(iii)] For any $H$ with $\mathsf{D}(n V_{k,r}) \leq H \leq nV_{k,r}$, any faithful, ergodic, measure-preserving action of $H$ is automatically essentially free. 
\end{itemize}


The structure of this paper is as follows. First, in \autoref{s2}, we give some basic preliminaries on  \'etale groupoids and their topological full groups. In \autoref{s3} we prove our main results, Theorem \ref{thmintro:equiv} and \ref{thmintro:Realization} in \autoref{s3last}, and we derive general consequences from them. Finally, in \autoref{s5} we give examples and applications.

\vspace{0.2cm}

\textbf{Acknowledgements:} The authors thank James Belk, Collin Bleak, Jan Gundelach, David Kerr, Xin Li and Eduardo Scarparo for useful comments throughout the development of this project. We also thank Matt Brin for sending us valuable feedback once a first
version of this manuscript circulated, as well as the referee for pointing out a mistake in an earlier version and for kindly
providing \autoref{eg:referee}.
This paper forms a part of the PhD thesis of the second named author at the University of Glasgow \cite{mythesis}.

\section{Preliminaries}
\label{s2}
In this section, we collect a number of basic definitions and 
results that will be used throughout our work. For further discussion on groupoids, we 
refer the reader to \cite{simsgpds}. 

\begin{df}
A \textit{groupoid} is a small category where every morphism is an isomorphism. More explicitly, a groupoid is a set $\G$ 
with partially defined multiplication and everywhere 
defined involutive operation 
$\gamma \mapsto \gamma ^{-1}$, satisfying:
\begin{enumerate}
    \item Associativity: If $\gamma_1\gamma_2$ and $(\gamma_1\gamma_2)\gamma_3$ are defined, then $\gamma_2\gamma_3$ is defined and $(\gamma_1\gamma_2)\gamma_3=\gamma_1(\gamma_2\gamma_3)$
    \item For every $\gamma\in \G$, the product $\gamma\gamma^{-1}$ is always defined. 
    \item For $\gamma_1,\gamma_2\in \G$, if $\gamma_1\gamma_2$ is 
    defined, then 
    \[\gamma_1=\gamma_1\gamma_2 \gamma_2^{-1} \ \ \mbox{ and } \ \ \gamma_2=\gamma_1^{-1} \gamma_1 \gamma_2.\]
\end{enumerate}
\end{df}
Elements of the form $\gamma\gamma^{-1}$ are called \emph{units}, and the space of units in $\G$ is denoted $\G^{(0)}$. The subset of $\G\times\G$ consisting of composable pairs is denoted by $\Gn{2}$. We define the \emph{source} and \emph{range} maps 
$s,r\colon \G\to \G^{(0)}$ by $s(\gamma)=\gamma^{-1}\gamma$ and
$r(\gamma)=\gamma\gamma^{-1}$ for all $\gamma\in\G$.

A \textit{topological groupoid} is a groupoid endowed with a topology such that the multiplication and inverse operations are continuous. We assume further that the topology on our groupoid is Hausdorff.
An (open) \emph{bisection} in $\G$ is an open subset $B\subseteq\G$ for which
the restrictions of $s$ and $r$ are homeomorphisms and $s(B), r(B)$ are open. We will work exclusively with ample 
groupoids in this paper, which we define next.

\begin{df}
Let $\G$ be a topological groupoid. 
\begin{enumerate} 
 \item We say that $\G$ is \emph{\'etale}
if its source and range maps are local homeomorphisms.
Equivalently, the set of all open bisections in $\G$ forms a 
basis for the topology of $\G$. 
\item We say that it is \emph{ample} if 
the set of all compact, open bisections in $\G$ forms a basis 
for the topology of $\G$. Equivalently, $\G$ is \'etale and 
$\G^{(0)}$ is totally disconnected
\item We say that $\G$ is a \emph{Cantor groupoid} 
if $\G^{(0)}$ is a Cantor space. 
\end{enumerate}

For an ample groupoid $\G$, we will write $\mathcal{B}(\G)$ for 
the set of its compact open bisections. In this case, 
$\B(\G)$ is an inverse monoid with respect to set 
multiplication and inverses.
\end{df}

Note that \'etale, Cantor groupoids are ample.
We will often need to assume minimality and essential principality for our groupoids, so we define these.


\begin{df}
Let $\G$ be a topological groupoid, and let $x\in \Gn{0}$.
\begin{enumerate}
 \item We denote by $\G x$ the $\G$-\emph{orbit} of $x$, which is
 given by 
 \[\G x=\{r(\gamma)\colon \gamma\in \G, s(\gamma)=x\}.\]
 \item We say that $\G$ is \emph{minimal} if $\G x$ is dense in $\Gn{0}$ for all $x \in \Gn{0}$. 
 \item We denote by $\G_x$ the \emph{isotropy group} of $x$, which
 is given by 
 \[\G_x=\{\gamma\in\G\colon r(\gamma)=s(\gamma)=x\}.\]
 \item We say that $\G$ is \emph{principal} if $\G_x=\{x\}$ for 
 every $x\in \G^{(0)}$. 
 \item We say that $\G$ is \emph{essentially principal} if the set $ \big\{x\in \G^{(0)}\colon \G_x=\{x\}\big\}$ is dense in $\G^{(0)}$. 
 \item We say that $\G$ is \emph{effective} if for all $x \in \G^{(0)}$, all $\gamma \in \G_x \setminus \{ x \}$, and all open compact bisections $B $ such that $\gamma \in B$, there exists $\gamma' \in B$ such that $s(\gamma') \neq r(\gamma')$
\end{enumerate}
\end{df}

Some references in the literature define essential principality using the condition that the units with trivial isotropy have full measure for every invariant measure on the unit space. This notion is closely related to effectiveness, and the two agree when $\G$ is Hausdorff. In this text, we have a blanket assumption of $\G$ being effective, since this is what allows us to think of open compact bisections as local homeomorphisms without losing information.  

We now give a key example for this paper: the transformation groupoid associated to a topological 
dynamical system. 

\begin{eg}\label{eg:TransfGpd}
Let $\Gamma \acts X$ be a discrete group acting by homeomorphisms on a topological space $X$. Then we define the \textit{transformation groupoid} $\Gamma \ltimes X$ to be $\Gamma\ltimes X$ as a set, 
with multiplication determined by $(g,h(x))(h,x)=(gh,x)$, and 
inverses given by $(g,x)=(g^{-1},g(x))$. Here $s(g,x)=(1,x)$ and $r(g,x)=(1,g(x))$, and the unit space is canonically identified with $X$. Moreover, $\Gamma\ltimes X$ is \'etale if and only if 
$\Gamma$ is discrete. Many properties of $\Gamma \acts X$ translate into properties of the transformation groupoid, for example:
\begin{enumerate}
\item The action is \textit{free}, namely for all $g \in \Gamma\setminus\{1\}$ and $x \in X$, one has $gx\neq x$,
if and only if $\Gamma\ltimes X$ is principal. 
\item The action is \textit{topologically free}, namely for all $g \in \Gamma\setminus\{1\}$, the set $\{x \in X \colon gx=x\}$ has empty interior, if and only if $\Gamma\ltimes X$ is essentially principal. 
\item The action is minimal, namely for each $x\in X$, the orbit $\Gamma x$ is dense in $X$, if and only if the groupoid is minimal. 
\item A basis for the (compact) open bisections consists of sets of the form $(g,U)$, where $g \in \Gamma$ and $U\subseteq X$ is (compact and) open. In particular, $\Gamma\ltimes X$ is ample if and only if $\Gamma$ is
discrete and $X$ is totally disconnected.
\end{enumerate}
\end{eg}

Crucial in our paper is the construction of a groupoid of germs.
\begin{eg}[Groupoid of Germs]
  Let $\Gamma \acts X$ be an action as in \autoref{eg:TransfGpd}. The associated \emph{groupoid of germs} $\text{Germ}(\Gamma\acts X)$ is defined as
\[\text{Germ}(\Gamma\acts X):= \Gamma \ltimes X /\sim,\]
where $\sim$ is the equivalence relation given by $(g,x) \sim (h,y)$ whenever $x=y$ and there exists a neighbourhood $U$ of $x$ such that $g|_U=h|_U$. 
The groupoid structure, as well as its topology, are inherited from 
$\Gamma\ltimes X$. We warn the reader that $\mathrm{Germ}(\Gamma\acts X)$ may fail to be Hausdorff in general (although $\Gamma\ltimes X$ always is). 
\end{eg}

We are ready to define the topological full group of a groupoid. This concept was first introduced in the setting of Cantor minimal systems by Giordano-Putnam-Skau in \cite{giordano1999full}, and later generalised by Matui to more general \'etale groupoids in \cite{Mat_homology_2012}. These groups have been extensively studied in recent years, among others by Matui \cite{Mat_etale_2016, Mat_topological_2015} and Nekrashevych \cite{Nek_simple_2019}. The definition below, allowing for locally compact unit spaces, is due to Nyland-Ortega \cite{NylOrt_matuis_2021}.

\begin{df}\label{groupoid tfg}
Let $\G$ be an ample, effective groupoid. For each compact open subset 
$K \subseteq \G^{(0)}$, set 
\[ \mathsf{F}(\G)_K=\big\{ B \in \mathcal{B}(\G)\colon s(B)=r(B)=K\big\} \]
Note that $\mathsf{F}(\G)_K$ is a group, with unit given by the (trivial) bisection $K$. There is a canonical action $\alpha^K$
of $\mathsf{F}(\G)_K$ on $K$ by homeomorphisms. 
Extending these homeomorphisms as the identity outside of $K$ yields an action of $\mathsf{F}(\G)_K$ on $\G^{(0)}$, which we also
denote by $\alpha^K$. If $L\subseteq\G^{(0)}$ is another compact
open subset with $K\subseteq L$, it is then easy to see 
that $\alpha^K(\mathsf{F}(\G)_K) \subseteq \alpha^L(\mathsf{F}(\G)_L)$. 

The \emph{topological full group} of $\G$ is then the inductive limit
\[ \mathsf{F}(\G)=\varinjlim_{K\subseteq \G^{(0)}} \alpha^K(\mathsf{F}(\G)_K) \ \subseteq \mathrm{Homeo}(\G^{(0)}),\]
where $K$ ranges over all compact open subsets of $\G^{(0)}$.
\end{df}

When $\G^{(0)}$ is compact, we have $\mathsf{F}(\G)=\mathsf{F}(\G)_{ \mathcal{G}^{(0)}}$. In particular, for \'etale Cantor groupoids we have:
\[\mathsf{F}(\G)=\big\{B \in \mathcal{B}(\G)\colon s(B)=r(B)=\G^{(0)} \big\}.\]

One can think of the topological full group as the subgroup of homeomorphisms of $\G^{(0)}$ that locally look like bisections. 
In fact, an equivalent definition is the following:
\[\mathsf{F}(\G)=\left\{ g \in \mathrm{Homeo}(\G^{(0)})\colon \begin{aligned} \mbox{for all } x \in \G^{(0)} \mbox{ there is } B \in \mathcal{B}(\G)\\  
\mbox{with } x \in s(B), \text{ and } g|_{s(B)}=B \ \ \end{aligned}\ \right\}.\]                                                                                                                
In other words, the elements of the
topological full group are global symmetries (homeomorphisms) built from local symmetries (partial homeomorphisms) induced by compact open bisections.

For transformation groupoids, the following convenient description
of the topological full group goes back to the seminal work of 
Giordano-Putnam-Skau \cite{giordano1999full}:

\begin{eg}\label{TFGdefgroupaction}
Let $\Gamma$ be a discrete group, let $X$ be a locally
compact Hausdorff space, and let $\Gamma \acts X$ be a topologically free action by homeomorphisms. 
Then $\mathsf{F}(\Gamma\ltimes X)$ can be naturally identified with the subgroup of homeomorphisms $g \in \mathrm{Homeo}(X)$ for which there exists a continuous map $f_g \colon X \to \Gamma $ such that $g(x)=f_g(x)\cdot x$ for all $ x \in X$. Such a map $f_g$ is called an \emph{orbit cocycle} for $g$.  

For the associated groupoid of germs, it follows from 
\cite[Proposition 4.6]{NylOrt_topological_2019} that
$\mathsf{F}(\mathrm{Germ}(\Gamma\acts X))$ agrees with $\mathsf{F}(\Gamma \ltimes X)$, so it can be computed using orbit cocycles for $\Gamma\acts X$ as in the 
previous paragraph.
\end{eg}

In \cite{Nek_simple_2019}, Nekrashevych introduced the \emph{alternating subgroup} $\mathsf{A}(\G)$ of $\mathsf{F}(\G)$, in analogy with the alternating subgroup $A_n$ of $S_n$, defined via so-called multisections. In this paper, since all of 
our orbits are infinite, we have the luxury of a much simpler definition. 

\begin{df}\label{df:A(G)}
Let $\G$ be an ample groupoid such that all orbits are infinite. Its \emph{alternating group} 
$\mathsf{A}(\G)$ is the subgroup of $\mathsf{F}(\G)$ generated by the elements of order three coming from multisections, that is, elements $g\in\mathsf{F}(\G)$ of order three such that there exist disjoint compact open subsets $X_0,X_1,X_2 \subseteq \G^{(0)}$ satisfying $g(X_j)=X_{j+1}$ for $j=0,1,2$ (with indices taken modulo 3), and such that $g$ is the identity on $\G^{(0)} \setminus (X_0 \sqcup X_1 \sqcup X_2)$. 
\end{df}

When $\G$ is purely infinite and minimal, the definition given above agrees with Nekrashevych's definition by  \cite[Proposition~3.6]{Nek_simple_2019}, and thus $\mathsf{A}(\G)$ is normal in $\mathsf{F}(\G)$. 


Another subgroup of $\mathsf{F}(\G)$ considered by Nekrashevych is
the derived subgroup:

\begin{df}
Let $\G$ be an ample groupoid. Then the \emph{commutator subgroup} (also known as the \emph{derived subgroup}) $\mathsf{D}(\G)$ is the subgroup of $\mathsf{F}(\G)$ generated by the commutators $[g,f]$, for $g,f \in \mathsf{F}(\G)$. 
\end{df}

It is immediate from the definition that $\mathsf{D}(\G)$ is
normal in $\mathsf{F}(\G)$. One can also show that $\mathsf{A}(\G)\subseteq \mathsf{D}(\G)$; see \cite[Theorem~4.1]{Nek_simple_2019}. As it turns out, it is often the case that $\mathsf{A}(\G)=\mathsf{D}(\G)$, and 
this is in particular always the case for minimal, purely infinite groupoids (see \autoref{minimal essentially principal then simple}), which is the main focus of this work. In fact, there is 
presently no known example of an \'etale groupoid that does not have this property. 
In the rest of this work, we will mostly focus on alternating groups for technical reasons, but the reader should keep in mind that there is no distinction between them for purely 
infinite groupoids.

\section{Characterisations of Pure Infiniteness and Minimality}
\label{s3}

We begin by recalling the notion of pure infiniteness for
\'etale groupoids due to Matui \cite{Mat_topological_2015}.
All groupoids in this section are effective and ample.

\begin{df}\label{df:PI}
A groupoid $\G$ is said to be \emph{purely infinite} if for every compact and open set $X \subseteq \G^{(0)}$ there exist compact open bisections $B,B'\subseteq \G$ such that $s(B)=s(B')=X$, and $r(B),r(B')$ are disjoint and contained in $X$. 
\end{df}


The terminology above is justified by the fact that 
the reduced groupoid C*-algebra of a minimal, 
essentially principal, purely infinite \'etale groupoid is 
purely infinite and simple. The following 
result is known to the experts \cite{Mat_topological_2015}, but we provide a proof nonetheless for 
the convenience of the reader. 
Recall that a projection in a C*-algebra is said to be 
\emph{properly infinite} if it is Murray-von Neumann equivalent
to two pairwise orthogonal subprojections of itself.

\begin{prop}\label{prop:PIgroupoidC*alg}
Let $\mathcal{G}$ be a minimal, 
essentially principal, purely infinite, ample groupoid. Then $C^*_r(\mathcal{G})$ is purely 
infinite and simple.
\end{prop}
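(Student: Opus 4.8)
**Proof plan for Proposition (C*_r(G) is purely infinite and simple):**

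The plan is to show simplicity and pure infiniteness separately, reducing pure infiniteness to the statement that every nonzero projection in $C^*_r(\G)$ is properly infinite, which combined with simplicity gives the result by the standard characterisation of purely infinite simple C*-algebras (every nonzero hereditary subalgebra contains an infinite projection). First I would establish simplicity: since $\G$ is minimal and essentially principal (hence effective, which is our blanket assumption, and Hausdorff), $C^*_r(\G)$ is simple by the well-known result of Brown--Clark--Farthing--Sims (or Archbold--Spielberg in the transformation-group case). This is a black box I would cite rather than reprove. The key point is that minimality prevents proper ideals coming from invariant open subsets of $\G^{(0)}$, and essential principality (effectiveness) guarantees the canonical conditional expectation $E\colon C^*_r(\G)\to C_0(\G^{(0)})$ is faithful on ideals, so any nonzero ideal meets $C_0(\G^{(0)})$ nontrivially and then exhausts everything by minimality.

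Next, for pure infiniteness, it suffices by simplicity to show that for every nonzero projection $p\in C^*_r(\G)$, $p$ is properly infinite; indeed in a simple C*-algebra where every nonzero projection is properly infinite, every hereditary subalgebra contains a projection (using that the algebra has "enough" projections — here coming from compact open subsets of $\G^{(0)}$), and that projection is infinite, which is one of the standard definitions of purely infinite simple. The heart of the matter is the case $p = \mathbbm{1}_X$ for $X\subseteq\G^{(0)}$ a nonempty compact open set. For such $p$, \autoref{df:PI} hands us compact open bisections $B, B'$ with $s(B)=s(B')=X$ and $r(B), r(B')$ disjoint and contained in $X$; then $v = \mathbbm{1}_B$ and $v'=\mathbbm{1}_{B'}$ are partial isometries in $C^*_r(\G)$ with $v^*v = \mathbbm{1}_X = (v')^*v'$ and $vv^* = \mathbbm{1}_{r(B)}$, $v'(v')^* = \mathbbm{1}_{r(B')}$ orthogonal subprojections of $\mathbbm{1}_X$. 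This exhibits $\mathbbm{1}_X$ as Murray--von Neumann equivalent to two orthogonal subprojections of itself, i.e. properly infinite. Then I would bootstrap to an arbitrary nonzero projection $p$: by simplicity $C^*_r(\G) p C^*_r(\G) = C^*_r(\G)$, so there is a nonzero $\mathbbm{1}_X$ (some compact open $X\neq\emptyset$, since these projections span a dense subalgebra and by a standard approximation argument) that is Murray--von Neumann subequivalent to $p$; since $\mathbbm{1}_X$ is properly infinite and simplicity makes it full, $p$ is properly infinite as well (a full properly infinite projection dominates, up to MvN equivalence, any projection, and properly infinite is inherited appropriately). Equivalently, invoke Cuntz's result that in a simple C*-algebra, if one nonzero projection is properly infinite and full then all are.

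The main obstacle I expect is the approximation/perturbation argument needed to pass from the explicitly-constructed infinite projections $\mathbbm{1}_X$ (living in the "diagonal" $C_0(\G^{(0)})$) to an arbitrary nonzero projection or nonzero hereditary subalgebra of $C^*_r(\G)$ — one must use faithfulness of the conditional expectation $E$ together with a standard trick (cut down by a spectral projection of $E(a^*a)$ for a suitable positive element $a$, then perturb) to produce a projection in the given hereditary subalgebra that is Murray--von Neumann equivalent to a subprojection of some $\mathbbm{1}_X$; this is routine but genuinely the technical core. I would handle it by citing the analogous argument from Matui's work or from the Kirchberg--Sims or Anantharaman-Delaroche framework for purely infinite groupoid C*-algebras, indicating that essential principality is exactly what makes $E$ faithful and hence makes the perturbation go through, and that minimality upgrades "some compact open $X$" to "every compact open $X$."
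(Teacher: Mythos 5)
Your proposal matches the paper's proof in its essentials: the key step is identical (taking $v=\mathbbm{1}_B$ and $w=\mathbbm{1}_{B'}$ for the bisections supplied by pure infiniteness of $\G$ to exhibit each diagonal projection $\mathbbm{1}_X$ as Murray--von Neumann equivalent to two orthogonal subprojections of itself), simplicity is black-boxed in both, and the reduction of pure infiniteness of $C^*_r(\G)$ to proper infiniteness of the projections in $C_0(\G^{(0)})$ --- which you sketch by hand via the faithful conditional expectation and a perturbation argument, with a detour through arbitrary projections --- is exactly what the paper outsources to Theorem~B of B\"onicke--Li \cite{BonLi_ideal_2020}. The approach is therefore essentially the same; the paper's citation simply makes the reduction cleaner than your hand-rolled bootstrap.
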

\begin{proof}
Simplicity of $C^*_r(\G)$ is well-known, so we only prove
pure infiniteness.
By \cite[Theorem~B]{BonLi_ideal_2020}, it suffices to show 
that every projection in $C_0(\G^{(0)})$ is properly infinite
in $C^*_r(\G)$. Let $p\in C_0(\G^{(0)})$ and let $X$ denote its 
support, which is compact and open.
Use pure infiniteness of $\G$ 
to obtain compact open bisections $B,B'\subseteq \G$ 
satisfying $s(B)=s(B')=X$ and $r(B)\sqcup r(B')\subseteq X$. 
Set $v=\mathbbm{1}_B$ and $w=\mathbbm{1}_{B'}$, which belong to 
$C_c(\mathcal{G})$ since $B$ and $B'$ are compact. 
The conditions on $B$ and $B'$ easily give 
\[v^*v=w^*w=p, \ \ \ vv^*\perp ww^*, \ \mbox{ and } \
 vv^*,ww^*\leq p.
\]
Thus $vv^*$ and $ww^*$ are pairwise orthogonal subprojections of $p$ which are Murray-von Neumann equivalent to $p$. We conclude that $p$ is properly
infinite, and hence that $C^*_r(\G)$ is purely infinite, as desired.
\end{proof}

Note that many ample groupoids are known to be purely infinite and minimal, for example: 
\begin{enumerate}
    \item Transformation groupoids of certain nonamenable groups acting amenably, such as those studied in \cite{GarGefKraNar_classifiability_2023,elek2020amenable, rordam2012purely}.
    \item Shift of finite type groupoids \cite{Mat_topological_2015}, or more generally, large classes of graph groupoids \cite{NylOrt_topological_2019}. 
    \item Groupoids arising from Beta expansions \cite{MatMat_topological_2014}. 
    \item Certain groupoids arising from left regular representations of Garside categories \cite{li2021left}; see also \cite[Theorem~A]{li2022cstar}. 
\end{enumerate}
We will need the following result of Matui:

\begin{lma}\label{lma:PIcompact and open}(\cite[Proposition 4.11]{Mat_topological_2015}).
Let $\G$ be an essentially principal, \'etale, ample groupoid. The following are equivalent:
\begin{enumerate}
    \item $\G$ is purely infinite and minimal.
    \item For all compact and open subsets $X,Y \subseteq \G^{(0)}$, there exists a compact open bisection $B\subseteq \G$ such that $s(B)=X$ and  $r(B) \subseteq Y$.
    \item For all compact and open subsets $X,Y\subseteq \G^{(0)}$
    with $X\neq \G^{(0)}$ and $Y\neq \emptyset$,
    there exists $\alpha\in \mathsf{F}(\G)$ such that 
    $\alpha(X)\subseteq Y$.
\end{enumerate}
\end{lma}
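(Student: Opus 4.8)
The plan is to prove the cycle of implications $(1)\Rightarrow(2)\Rightarrow(3)\Rightarrow(1)$. For $(1)\Rightarrow(2)$ (one may assume $X$ and $Y$ non-empty, the remaining cases being trivial) I would proceed in three moves. First, iterating the definition of pure infiniteness, for every $n\ge 1$ there exist compact open bisections $C_1,\dots,C_n$ with $s(C_i)=Y$ and with $r(C_1),\dots,r(C_n)$ pairwise disjoint and contained in $Y$ — at each step one splits the range of one of the $C_i$ into two disjoint pieces. Second, minimality guarantees that every $\G$-orbit meets $Y$, so covering $X$ by finitely many compact open bisections that carry pieces of $X$ into $Y$ and disjointifying their sources yields compact open bisections $B_1,\dots,B_n$ with the $s(B_i)$ pairwise disjoint, $\bigsqcup_i s(B_i)=X$, and $r(B_i)\subseteq Y$. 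Finally, the composites $C_iB_i$ have pairwise disjoint ranges inside $Y$ (since $r(C_iB_i)\subseteq r(C_i)$), so $B:=\bigsqcup_i C_iB_i$ is a compact open bisection with $s(B)=X$ and $r(B)\subseteq Y$. This step is routine.

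For $(2)\Rightarrow(3)$ the key device is a transposition trick. When $X\cap Y=\varnothing$, use (2) to obtain a compact open bisection $B$ with $s(B)=X$ and $r(B)\subseteq Y$; then $B\sqcup B^{-1}$ is a compact open bisection with source and range both equal to $X\sqcup r(B)$, so extending it by the identity produces an element $\alpha\in\mathsf{F}(\G)$ with $\alpha(X)=r(B)\subseteq Y$. The general case reduces to this one: if $Y\not\subseteq X$, replace $Y$ by $Y\setminus X$; if $Y\subseteq X$, first move $X$ by such an element into a non-empty compact open set $Z\subseteq\G^{(0)}\setminus X$ (which is then automatically disjoint from $Y$) and afterwards move the image into $Y$, composing the two elements of $\mathsf{F}(\G)$. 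The case $X=\varnothing$ is trivial.

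For $(3)\Rightarrow(1)$ I would first observe that restricting and disjointifying the local bisection data of an element $\alpha$ as in (3) yields the weakened form of (2) in which $X$ runs only over \emph{proper} compact open subsets of $\G^{(0)}$. Minimality then follows by tracking a point: if $\G x$ were not dense, pick a compact open neighbourhood $X\subsetneq\G^{(0)}$ of $x$ and a compact open $Y$ disjoint from $\overline{\G x}$; any $\alpha\in\mathsf{F}(\G)$ with $\alpha(X)\subseteq Y$ would carry $x$ outside $\overline{\G x}$, because $\alpha$ agrees near $x$ with a bisection — a contradiction. Minimality together with the weak form of (2) forces $\G^{(0)}$ to be perfect (an isolated point makes $\G$ transitive and $\G^{(0)}$ discrete, contradicting a cardinality count in the weak (2); the cases $\card(\G^{(0)})\le 2$ are trivial). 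Perfectness lets one split any compact open set into two non-empty compact open pieces, so applying the weak (2) to each piece gives pure infiniteness at every proper compact open set; finally, when $\G^{(0)}$ is compact, pure infiniteness at $\G^{(0)}$ itself is obtained by fixing a partition $\G^{(0)}=X_1\sqcup X_2$, using that $X_1$ and $X_2$ are proper and hence already properly infinite to embed each $X_i$ into two disjoint subsets of the other, and gluing along the partition to produce two compact open bisections with source $\G^{(0)}$ and disjoint ranges.

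The hard part is precisely this last implication, namely recovering pure infiniteness of the \emph{whole} unit space from a statement about the full group that by its nature only produces maps with \emph{proper} compact open domains. I expect the natural shortcut — upgrading the mutual embeddings of compact open sets supplied by (2)/(3) to honest equivalences by a Cantor--Schr\"oder--Bernstein argument — to fail as literally stated, since the ascending union appearing in the back-and-forth construction need not be compact; it is precisely the transposition trick above that circumvents the need for full equivalence of compact open sets.
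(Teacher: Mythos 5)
The paper itself gives no proof of this lemma: it is imported verbatim from Matui's Proposition 4.11, where it is proved under the standing assumption that $\G^{(0)}$ is a Cantor set. Measured against that, your argument is essentially the standard one and, in the intended setting, correct: the cover-and-disjointify construction for $(1)\Rightarrow(2)$ (iterate pure infiniteness on $Y$ to get $n$ bisections with source $Y$ and pairwise disjoint ranges in $Y$, use minimality and ampleness to partition $X$ into sources of bisections landing in $Y$, then compose), the transposition $B\sqcup B^{-1}\sqcup\mathrm{id}$ for $(2)\Rightarrow(3)$ with the two-step reduction when $Y\subseteq X$, and the recovery of pure infiniteness from the ``proper-$X$'' form of $(2)$ by splitting target sets and gluing along a partition $\G^{(0)}=X_1\sqcup X_2$ all check out. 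Your closing remark about why a Cantor--Schr\"oder--Bernstein shortcut fails is also well taken.

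Two caveats, both traceable to the paper's transcription of Matui rather than to your strategy. First, condition (2) as printed omits $Y\neq\emptyset$, under which $(1)\Rightarrow(2)$ is false for $X\neq\emptyset$; your silent restriction to nonempty $Y$ is the intended reading. Second, and more substantively, your assertion that ``the cases $\card(\G^{(0)})\le 2$ are trivial'' is wrong in the opposite direction: for the one- or two-point (transitive, principal) groupoid, conditions (2) and (3) hold while (1) fails, since no nonempty finite compact open set can be properly infinite. So the equivalence is genuinely false there, and what your cardinality count actually proves is that an isolated point together with the weak form of (2) (take $Y=\{x\}$ and use injectivity of $r|_B$ to force $|X|\le 1$ for every proper compact open $X$) pins $\G^{(0)}$ down to at most two points. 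Note also that minimality plus an isolated point yields a dense set of isolated points, not discreteness of $\G^{(0)}$, so it really is the cardinality count and not the claimed discreteness that does the work. The honest conclusion is that the lemma requires $\G^{(0)}$ infinite (equivalently, perfect) — automatic under Matui's Cantor hypothesis and in every application in this paper — and with that proviso your proof is complete.
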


Next, we record the fact, essentially due to Matui, 
that the subgroups $\mathsf{A}(\G)$ and $\mathsf{D}(\G)$ agree for the class of 
purely infinite, minimal groupoids.

\begin{prop}\label{minimal essentially principal then simple}
Let $\G$ be an ample, effective, purely infinite and minimal groupoid.
Then $\mathsf{D}(\G)$ is simple and thus $\mathsf{A}(\G)=\mathsf{D}(\G)$.
\end{prop}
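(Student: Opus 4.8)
The plan is to invoke the by-now standard machinery for topological full groups of purely infinite minimal groupoids, essentially due to Matui and Nekrashevych, and to bootstrap simplicity of $\mathsf{A}(\G)$ into the equality $\mathsf{A}(\G)=\mathsf{D}(\G)$. First I would recall that for a purely infinite minimal (effective, ample) groupoid, the alternating group $\mathsf{A}(\G)$ is simple: this is \cite[Theorem~4.7 / Theorem~4.10]{Nek_simple_2019}, where Nekrashevych shows that the commutator subgroup of $\mathsf{F}(\G)$ is simple for minimal, purely infinite groupoids, together with the identification $\mathsf{A}(\G)=\mathsf{D}(\G)$ already noted in the excerpt for this class (the remark just after \autoref{df:A(G)} records that the two definitions of $\mathsf{A}(\G)$ coincide and that $\mathsf{A}(\G)\trianglelefteq\mathsf{F}(\G)$ via \cite[Proposition~3.6]{Nek_simple_2019}). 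So the real content to assemble is: (i) $\mathsf{A}(\G)$ is nontrivial and simple; (ii) $\mathsf{A}(\G)\subseteq\mathsf{D}(\G)$; (iii) $\mathsf{D}(\G)\subseteq\mathsf{A}(\G)$; the conjunction of (ii) and (iii) gives $\mathsf{A}(\G)=\mathsf{D}(\G)$, and then (i) gives that $\mathsf{D}(\G)$ is simple.

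For step (ii), the inclusion $\mathsf{A}(\G)\subseteq\mathsf{D}(\G)$ is exactly \cite[Theorem~4.1]{Nek_simple_2019}, cited in the excerpt: an order-three element supported on a disjoint triple $X_0\sqcup X_1\sqcup X_2$ is a commutator of two "transposition-type" full-group elements (a $3$-cycle is the commutator of two double-transpositions, realised bisection-wise), so each generator of $\mathsf{A}(\G)$ lies in $\mathsf{D}(\G)$. For step (iii), I would use that $\mathsf{A}(\G)$ is normal in $\mathsf{F}(\G)$, so the quotient $\mathsf{F}(\G)/\mathsf{A}(\G)$ is defined; the heart of the matter is to show this quotient is abelian, equivalently $\mathsf{D}(\G)\subseteq\mathsf{A}(\G)$. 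Here I would appeal to Nekrashevych's computation (again \cite[Theorem~4.1]{Nek_simple_2019}, or its refinement) that for minimal, purely infinite groupoids the abelianisation $\mathsf{F}(\G)_{\mathrm{ab}}$ factors through $\mathsf{F}(\G)/\mathsf{A}(\G)$, i.e. commutators are generated by the order-three multisection elements; concretely, one shows that any commutator $[g,f]$ of full-group elements can be rewritten, using a suitable partition of $\G^{(0)}$ into compact open pieces on which $g$ and $f$ act as bisections and using pure infiniteness (\autoref{lma:PIcompact and open}) to "make room", as a product of elements supported on disjoint triples — the familiar fact that the commutator subgroup of a symmetric-group-like object is the alternating part.

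Having (ii) and (iii), $\mathsf{A}(\G)=\mathsf{D}(\G)$, and then simplicity of $\mathsf{D}(\G)$ is simplicity of $\mathsf{A}(\G)$, which is \cite[Theorem~4.7]{Nek_simple_2019} under the hypotheses of minimality and pure infiniteness (with nontriviality guaranteed because all orbits are infinite and $\G$ is purely infinite, so $\mathsf{A}(\G)$ certainly contains a nontrivial order-three element). I expect the main obstacle — really the only nonroutine point — to be step (iii): verifying carefully that every commutator in $\mathsf{F}(\G)$ is a product of multisection $3$-cycles, for which one genuinely needs pure infiniteness (to split supports and find disjoint "room" in $\G^{(0)}$) and essential principality/effectiveness (to identify bisections with partial homeomorphisms without loss of information, as flagged in the preliminaries). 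Since all of this is contained in the cited work of Nekrashevych and Matui, the cleanest writeup is to state the needed inputs precisely and cite them, rather than reprove the $3$-cycle combinatorics from scratch.
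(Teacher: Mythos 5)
Your overall architecture --- (i) $\mathsf{A}(\G)$ simple, (ii) $\mathsf{A}(\G)\subseteq\mathsf{D}(\G)$, (iii) $\mathsf{D}(\G)\subseteq\mathsf{A}(\G)$ --- is logically sound, and steps (i) and (ii) are correctly sourced (simplicity of $\mathsf{A}(\G)$ for minimal effective groupoids is Nekrashevych's Theorem~1.1, and $\mathsf{A}(\G)\subseteq\mathsf{D}(\G)$ follows from his Theorem~4.1, which says $\mathsf{A}(\G)$ is contained in \emph{every} nontrivial normal subgroup of $\mathsf{F}(\G)$). The genuine gap is step (iii), which you correctly flag as the only nonroutine point but then do not actually close. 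Theorem~4.1 of Nekrashevych gives only the inclusion $\mathsf{A}(\G)\subseteq\mathsf{D}(\G)$ and has no ``refinement'' yielding the reverse one; and the proposed direct argument --- rewriting an arbitrary commutator $[g,f]$ as a product of multisection $3$-cycles, ``the familiar fact that the commutator subgroup of a symmetric-group-like object is the alternating part'' --- is precisely the statement whose validity for general \'etale groupoids is open (the paper explicitly records that no groupoid with $\mathsf{A}(\G)\neq\mathsf{D}(\G)$ is known, i.e.\ nobody can prove this in general, and it is closely tied to the AH conjecture rather than to elementary permutation combinatorics). So as written, the key inclusion rests on a misattributed citation and an analogy that does not constitute a proof.

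The paper closes the gap by going in the opposite direction: it quotes Matui's Theorem~4.16 of \cite{Mat_etale_2016}, which asserts directly that $\mathsf{D}(\G)$ is \emph{simple} for essentially principal, minimal, purely infinite ample groupoids (this is where pure infiniteness genuinely enters, via a normal-generation argument for arbitrary nontrivial elements of $\mathsf{D}(\G)$, not via a commutator-to-$3$-cycle rewriting). Once $\mathsf{D}(\G)$ is known to be simple, the equality $\mathsf{A}(\G)=\mathsf{D}(\G)$ is immediate: $\mathsf{A}(\G)$ is nontrivial, contained in $\mathsf{D}(\G)$ by Nekrashevych's Theorem~4.1, and normal in $\mathsf{F}(\G)$, hence a nontrivial normal subgroup of the simple group $\mathsf{D}(\G)$. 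If you replace your step (iii) by this appeal to Matui's simplicity theorem for $\mathsf{D}(\G)$, your write-up becomes correct and essentially coincides with the paper's two-line proof.
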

\begin{proof}
Simplicity of $\mathsf{D}(\G)$ follows from \cite[Theorem~4.16]{Mat_etale_2016}. Thus $\mathsf{A}(\G)=\mathsf{D}(\G)$ by 
\cite[Theorem~4.1]{Nek_simple_2019}.
\end{proof}

\subsection{Embeddings of Thompson's group \texorpdfstring{$V$}{V}}

In \cite[Proposition~4.10]{Mat_topological_2015},
Matui proved that if $\G$ is purely infinite, then $\mathsf{F}(\G)$ contains $\mathbb{Z}_2 \ast \mathbb{Z}_3$ as a subgroup (in fact, for this it suffices for $\G$ to be 
\emph{properly infinite}). In
particular, $\mathsf{F}(\G)$ is nonamenable. We will strengthen this
result by showing that the Thompson group $V$ \emph{always} embeds into
$\mathsf{F}(\G)$. In fact, we give a characterisation of 
pure infiniteness of $\G$ in terms of the 
existence of certain embeddings of $V$ into $\mathsf{F}(\G)$; 
see \autoref{thm:Vembeds}. 

We need some preparation first. The 
following notion is due to Bleak, Elliott and Hyde \cite{BleEllHyd_sufficient_2020}.

\begin{df}\label{df:Vigorous}
(\cite[Definition 1.1]{BleEllHyd_sufficient_2020}). Let $\Gamma \leq \mathrm{Homeo}(\mathcal{C})$ be a subgroup of homeomorphisms of the Cantor space $\mathcal{C}$. We say that $\Gamma$ is \emph{vigorous}\footnote{It would be 
more technically correct to say that the action of $\Gamma$ on $\mathcal{C}$ is vigorous, since vigor is not a property of $\Gamma$, but rather of the way it sits in $\mathrm{Homeo}(\mathcal{C})$.} if whenever 
$X, Y_1,Y_2 \subseteq \mathcal{C}$ are compact and open with $Y_1\neq X$ and $Y_2\neq \emptyset$, and satisfy $Y_1,Y_2 \subseteq X$, then there exists $g \in \Gamma$ such that $g$ is the identity on $\mathcal{C} \setminus X$, and $g(Y_1) \subseteq Y_2$. 
\end{df}

We will need the following observations.

\begin{prop}\label{lemma:NoInvMeas}
Let $\Gamma \leq \mathrm{Homeo}(\mathcal{C})$ be a vigorous group. Then: 
\begin{enumerate}
 \item There
is no $\Gamma$-invariant Borel probability measure on $\mathcal{C}$. In particular,
$\Gamma$ is not amenable.
\item $\Gamma$ has infinite conjugacy classes (ICC).
\end{enumerate}
\end{prop}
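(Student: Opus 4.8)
The plan is to extract one elementary consequence of vigor and feed it into two routine arguments. For a clopen set $U\subsetneq\mathcal{C}$, write $\Gamma_U=\{g\in\Gamma : g|_{\mathcal{C}\setminus U}=\mathrm{id}\}$ for the rigid stabiliser of $U$; every $g\in\Gamma_U$ fixes $\mathcal{C}\setminus U$ pointwise and hence satisfies $g(U)=U$. Applying the defining property of vigor (\autoref{df:Vigorous}) with ambient set $X=U$ gives the single input both parts use: whenever $A,B\subseteq U$ are clopen with $B\neq\emptyset$ and $A\subsetneq U$, there is $f\in\Gamma_U$ with $f(A)\subseteq B$. The one constraint to respect throughout is that $\Gamma_U$ cannot compress $U$ into a proper subset of itself, so every application of vigor must be arranged so that the set being moved is a \emph{proper} clopen subset of the ambient clopen set. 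I will also use freely that every nonempty clopen subset of $\mathcal{C}$ is infinite, hence splits into two nonempty clopen pieces and contains proper nonempty clopen subsets, and that complements of clopen sets are clopen.

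For part (1), suppose for contradiction that $\mu$ is a $\Gamma$-invariant Borel probability measure on $\mathcal{C}$. I claim $\mu(W)=0$ for every nonempty clopen $W\subsetneq\mathcal{C}$. Fix such a $W$; since $\mathcal{C}\setminus W$ is a nonempty clopen set, write $\mathcal{C}\setminus W=C\sqcup D$ with $C,D$ nonempty clopen, and pick a nonempty clopen $B\subsetneq C$. Put $U:=W\sqcup C$ and $A:=W\sqcup B$, so that $A\subsetneq U\subsetneq\mathcal{C}$ and $B\subseteq U$ is nonempty clopen; the internal compressibility above yields $f\in\Gamma_U$ with $f(A)\subseteq B$, whence $\mu(W)+\mu(B)=\mu(A)=\mu(f(A))\leq\mu(B)$ and so $\mu(W)=0$. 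Applying this to a partition $\mathcal{C}=W_1\sqcup W_2$ into nonempty clopen sets gives $1=\mu(\mathcal{C})=\mu(W_1)+\mu(W_2)=0$, a contradiction. Non-amenability then follows because an amenable group acting by homeomorphisms on a nonempty compact Hausdorff space always admits an invariant regular Borel probability measure.

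For part (2), let $g\in\Gamma$ with $g\neq 1$; I will show its $\Gamma$-conjugacy class is infinite. Choose $x$ with $g(x)\neq x$ and a clopen neighbourhood $U$ of $x$ small enough that $U\cap g(U)=\emptyset$; then $\emptyset\neq U\subsetneq\mathcal{C}$. First I would show that the $\Gamma_U$-orbit of any $y_0\in U$ is infinite: if that orbit $F$ were finite, I would pick a clopen $A$ with $y_0\in A\subsetneq U$ and $A\cap F=\{y_0\}$, and a nonempty clopen $B\subseteq U$ with $B\cap F=\emptyset$, and use internal compressibility to get $f\in\Gamma_U$ with $f(A)\subseteq B$; then $f(y_0)\in B$ lies outside $F$, contradicting $f(y_0)\in F$. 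Now fix $y_0\in U$. For $f\in\Gamma_U$, since $f$ preserves $U$ we have $f^{-1}(y_0)\in U$, hence $g(f^{-1}(y_0))\in g(U)\subseteq\mathcal{C}\setminus U$, which $f$ fixes; therefore $(fgf^{-1})(y_0)=f\big(g(f^{-1}(y_0))\big)=g(f^{-1}(y_0))$. As $f$ runs over $\Gamma_U$, the point $f^{-1}(y_0)$ runs over the infinite $\Gamma_U$-orbit of $y_0$, so $g(f^{-1}(y_0))$ takes infinitely many values because $g$ is injective; hence the conjugates $fgf^{-1}$ are pairwise distinct for distinct values of $f^{-1}(y_0)$, and the conjugacy class of $g$ is infinite.

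I do not expect a serious obstacle: the content is recognising that vigor delivers the rigid-stabiliser compression above, and that conjugating $g$ by elements of $\Gamma_U$ spreads out the image of a point with infinite orbit. The only delicate point is the bookkeeping with clopen sets, in particular respecting that vigor can only move a \emph{proper} clopen subset of its ambient set — this is what forces the slightly roundabout choice of $U$ and $A$ in part (1) and the two-step structure in part (2). A pleasant by-product of this route is that it needs no separate treatment of atomic measures (e.g.\ point masses), since it shows directly that any invariant $\mu$ vanishes on every proper clopen set.
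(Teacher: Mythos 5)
Your proof is correct, and while part (1) is in essence the paper's argument, part (2) takes a genuinely different route. For (1) the paper simply picks clopen sets $X,Y$ with $\mu(X)<\mu(Y)$ and compresses $Y$ into $X$; you instead prove the stronger statement that an invariant $\mu$ vanishes on every proper nonempty clopen set, by compressing $W\sqcup B$ into $B$ inside a proper ambient set. The core mechanism (vigor compresses, invariant measures cannot shrink) is the same, but your version is more scrupulous about the ambient-set constraint $X\neq\mathcal{C}$ in the definition of vigor, which the paper's two-line argument glosses over. For (2) the paper fixes a clopen $Y_1$ with $g(Y_1)\cap Y_1=\emptyset$ and, for each member of an infinite disjoint family of targets, produces a conjugator supported off $Y_1$ moving $g(Y_1)$ into that target — one application of vigor per conjugate. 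You instead isolate the intermediate lemma that the rigid stabiliser $\Gamma_U$ of a proper clopen set has infinite orbits on $U$, and then exploit the identity $(fgf^{-1})(y_0)=g(f^{-1}(y_0))$ for $f\in\Gamma_U$ when $g(U)\cap U=\emptyset$. Both arguments rest on the same engine (vigor supplies many elements with prescribed support), but your decomposition cleanly separates the source of infinitude from the conjugation bookkeeping, and as a side effect it avoids a slip in the paper's write-up, where the targets $Y_2^{(n)}$ are taken inside $Y_1$ even though $h_n(g(Y_1))$ must remain in $\mathcal{C}\setminus Y_1$ when $h_n$ is supported there (the intended targets should of course sit in the complement of $Y_1$). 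Your infinite-orbit lemma is also reusable elsewhere in the paper, e.g.\ in the relative ICC statement for the $\mathsf{D}(\G)$-conjugacy classes.
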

\begin{proof}
(1). Let $\mu$ be a Borel probability measure on $\mathcal{C}$. Choose nontrivial 
compact and open
sets $Y_1,Y_2\subseteq \mathcal{C}$ with $\mu(Y_1)<\mu(Y_2)$. By vigor
(with $X=\mathcal{C}$),
there is $g\in \Gamma$ with $g(Y_1)\subseteq Y_2$. Then
$\mu(gY_2)\leq \mu(Y_1)< \mu(Y_2)$, so $g\cdot\mu\neq\mu$ 
and hence $\mu$ is not $\Gamma$-invariant.

The last claim follows since a discrete group is amenable if and 
only if every action on a compact Hausdorff space admits an invariant Borel probability measure.

(2). Let $g\in \Gamma\setminus\{1\}$; we will show that 
the conjugacy class of $g$ is infinite.
Find a compact and open subset $Y_1\subseteq \mathcal{C}$ such that  $g(Y_1) \cap Y_1= \emptyset$, and
find an infinite family $\{Y_2^{(n)}\colon n\in\N\}$ of pairwise disjoint 
compact and open subsets of $Y_1$. 
Use vigor of $\Gamma$ with $X=\mathcal{C}$ to find, for every $n\in\N$,
a group element $h_n\in \Gamma$ with support contained in $\mathcal{C}\setminus Y_1$, 
such that $h_n(g(Y_1)) \subseteq Y_2^{(n)}$. 
One readily checks that $h_n^{-1}gh_n(Y_1)\subseteq Y_2^{(n)}$ for all $n\in\N$; in particular, this implies that 
$h_n^{-1}gh_n\neq h_m^{-1}gh_m$ if $n\neq m$, as 
desired.\end{proof}

Note that in the following theorem, we must
exclude $X=\G^{(0)}$, since the existence of an embedding 
$V\hookrightarrow \mathsf{F}(\G)$ with full support implies that 
$\G^{(0)}$ has trivial homology class. 

\begin{prop}\label{thm:Vembeds}
Let $\G$ be an \'etale Cantor groupoid which is purely infinite and minimal.
Then for every nontrivial compact and open subset $X\subseteq \G^{(0)}$, there exists
an embedding $\varphi_X\colon V \hookrightarrow \mathsf{A}(\G)$ 
such that $\mathrm{supp}(\varphi_X(V))$ contains $X$.
\end{prop}
\begin{proof}
Let $X \subseteq \G^{(0)}$ be nontrivial, compact and open. 
Use \autoref{lma:PIcompact and open} to find a compact and open bisection $B$
such that $s(B)=\G^{(0)}$ and $r(B) \subsetneq \G^{(0)}\setminus X$. Set $Y=\G^{(0)}\setminus r(B)$, which is a compact and open subset 
of $\G^{(0)}$ containing $X$. Moreover, 
\[\![Y]=[\G^{(0)}]-[r(B)]=[s(B)]-[r(B)]=0.\]

Denote by $\G |_Y$ the \emph{reduction} of $\G$ to $Y$, namely
\[\G |_Y=r^{-1}(Y)\cap s^{-1}(Y),\]
which is an \'etale subgroupoid of $\G$ with unit space 
equal to $Y$.
Since $Y$ is itself nonempty and compact and open,
there is an embedding $\mathsf{F}(\G_Y)\hookrightarrow \mathsf{F}(\G)$
whose support is exactly $Y$. 
Denote by $\mathcal{E}_2$ the groupoid associated to the 
shift of finite type corresponding to the single vertex graph with 2 loops.
(This groupoid is also called the \emph{Cuntz groupoid}, since its C*-algebra is
canonically isomorphic to the Cuntz algebra $\mathcal{O}_2$.) 
Recall that $\mathsf{F}(\mathcal{E}_2)\cong V$ and that $H_0(\mathcal{E}_2)=\{0\}$. 
Since the class of
the unit space of $\G |_Y$ is trivial in homology, 
by \cite[Proposition~5.14]{Mat_etale_2016} 
there exists a unital homomorphism $\pi\colon C^*_r(\mathcal{E}_2)\to C^*_r(\mathcal{G}_Y)$ satisfying
\begin{enumerate}
 \item[(a)] $\pi(C(\mathcal{E}_2^{(0)}))\subseteq C(Y)=C(\mathcal{G}_Y^{(0)})$, and 
 \item[(b)] for every compact, open bisection $B\subseteq \mathcal{E}_2$, there exists
 a compact, open bisection $B'\subseteq \G_Y$ such that $\pi(\mathbbm{1}_{B})=\mathbbm{1}_{B'}$. 
\end{enumerate}
Since $C^*_r(\mathcal{E}_2)\cong \mathcal{O}_2$ is simple, the map $\pi$ is
injective. In particular, $\pi$ induces an embedding
\[\varphi_X\colon V\cong \mathsf{F}(\mathcal{E}_2) \hookrightarrow \mathsf{F}(\G_Y) \subseteq  \mathsf{F}(\G).\]

It remains to show that the image of $\varphi_X$
is actually
contained in $\mathsf{A}(\G_Y)$. By \autoref{minimal essentially principal then simple}, it suffices to show that the image of this embedding is
contained in $\mathsf{D}(\G_Y)=\big[\mathsf{F}(\G_Y),\mathsf{F}(\G_Y)\big]$. Since group homomorphisms map commutators to commutators, and since $V$ is equal to its own commutator (because it is simple), it follows that the image of the embedding $V \hookrightarrow \mathsf{F}(\G|_Y )$ is contained in $\mathsf{D}(\G_Y)=\mathsf{A}(\G_Y)$.\end{proof}

The last statement in \autoref{thm:Vembeds} is not equivalent
to the remaining ones, that is, having
sufficiently many embeddings of $V$ into the topological full group
does not imply pure infiniteness of the groupoid. For example, if we let Thompson's group $V$ act trivially on
the Cantor space and denote by $\G$ the associated 
transformation groupoid, then $\G$ is not purely infinite
even though $V$ embeds into $\mathsf{F}(\G)=V$ with full support. 

Also, we do not seem to be able to obtain much information about the 
way that $V$ acts on $\mathcal{G}^{(0)}$ through these embeddings.
The next example, which was provided to us by the referee, shows that 
$V$ does not act vigorously on any parts of $\mathcal{G}^{(0)}$.

\begin{eg}\label{eg:referee}
Let the trivial group $\mathbf{1}$ act (trivially) on the natural 
numbers $\mathbb{N}$, and let $\N V_{\mathbf{1}}$ denote the associated twisted higher-dimensional Brin-Thompson group defined in 
\cite[Section~1]{belk2022twisted}. Writing $\mathcal{C}$ for the Cantor space, we consider the natural action of $\N V_{\mathbf{1}}$ on $\mathcal{C}^{\mathbb{N}}$. 
By \cite[Theorem~A]{belk2022twisted}, this group 
is not finitely generated. It is clear from the definition of 
$\mathbb{N}V_{\mathbf{1}}$ that this group agrees with the ascending union $\bigcup_{n\in\mathbb{N}}nV$ of the higher-dimensional 
Thompson-Brin groups. For every $n\in\mathbb{N}$, it is easy to see 
that the restriction of the action $\N V_{\mathbf{1}}\curvearrowright \mathcal{C}^{\mathbb{N}}$ to $nV$ is the product of the canonical
action of $nV$ on $\mathcal{C}^{\{1,\ldots,n\}}$ and the trivial action on the remaining coordinates. In particular, this action is not vigorous. Since every finitely generated subgroup of $\N V_{\mathbf{1}}$ must be contained in $nV$ for some $n\in\N$, we deduce that no
finitely generated subgroup of $\N V_{\mathbf{1}}$ acts vigorously
on $\mathcal{C}^{\N}$. By letting $\mathcal{G}$ denote the groupoid of germs of the action $\N V_{\mathbf{1}}\curvearrowright \mathcal{C}^{\mathbb{N}}$, we conclude that $\mathcal{G}$ satisfies
the assumptions of \autoref{thm:Vembeds} but the embeddings of $V$
into its topological full group provided by said proposition do not act vigorously on their supports.
\end{eg}

The following is a curious feature of the example described above,
which suggests that it is challenging to detect pure infiniteness
and minimality of a groupoid by looking at finitely generated 
subgroups of its topological full group.

\begin{rem}
Let the notation be as in \autoref{eg:referee}, and write 
$\mathcal{G}$ for the transformation groupoid of the action $\N V_{\mathbf{1}}\curvearrowright \mathcal{C}^{\mathbb{N}}$. Then 
$\mathcal{G}$ is purely infinite and minimal. On the other hand, if 
$G\leq \mathbb{N}V_{\mathbf{1}}$ is any finitely generated subgroup,
then the transformation groupoid associated to the restriction to $G$
of $\N V_{\mathbf{1}}\curvearrowright \mathcal{C}^{\mathbb{N}}$
is no longer purely infinite and minimal. 
\end{rem}

We now derive some conclusions regarding the (generalised) word problem for topological full groups. 

\begin{df}
\label{wordproblem} Let $\Gamma$ be a finitely generated group with finite generating set $S$.
We say that $\Gamma$ has 
\begin{enumerate}
    \item \emph{solvable word problem} (with respect to $S$), if given a finite word $w\in S^n$ there exists an algorithm that determines, in finitely many steps, whether or not $w$ is the identity.
     \item \emph{solvable generalised word problem} (with respect to $S$), if for all subgroups $\Lambda \subseteq \Gamma$, given a finite word $w\in S^n$, there exists an algorithm that determines, in finitely many steps, whether $w$ belongs to $\Lambda$.  
\end{enumerate}
\end{df}

Knowing that topological full groups contain $V$ allows us to deduce that 
they have unsolvable generalised word problem:

\begin{cor}
 Let $\G$ be a minimal, essentially principal, ample, purely infinite groupoid. Then $\mathsf{F}(\G)$ and $\mathsf{A}(\G)$ do not have a solvable generalised word problem. 
 \end{cor}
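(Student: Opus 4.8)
The plan is to combine \autoref{thm:Vembeds} with the classical undecidability results of Novikov--Boone and Mihailova. By \autoref{thm:Vembeds} there is an embedding $V \hookrightarrow \mathsf{A}(\G) \subseteq \mathsf{F}(\G)$, so it will suffice to exhibit a single finitely generated subgroup of $\mathsf{A}(\G)$ whose membership problem is undecidable. (Here, as the statement implicitly requires, we take $\mathsf{F}(\G)$ and $\mathsf{A}(\G)$ to be finitely generated, since the generalised word problem is only defined in that case; moreover I may assume they have \emph{solvable} word problem, as the word problem is the instance of the generalised word problem with $H$ trivial, and otherwise there is nothing to prove.)

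I would produce such a subgroup by transporting Mihailova's construction through $V$. Fix, by the Novikov--Boone theorem, a finitely presented group $P$ with generating set $x_1,\dots,x_n$ and unsolvable word problem, and let $\mathbb{F}_n$ denote the free group on $x_1,\dots,x_n$, with quotient map $\pi\colon \mathbb{F}_n \to P$. By Mihailova's theorem the fibre product $H_P := \{(u,v) \in \mathbb{F}_n \times \mathbb{F}_n : \pi(u) = \pi(v)\}$ is finitely generated, and evidently $(w,1) \in H_P$ if and only if $w = 1$ in $P$. Next fix an explicit embedding $j\colon \mathbb{F}_n \times \mathbb{F}_n \hookrightarrow V$ (standard: $\mathbb{F}_n$ embeds into $V$, and two copies of it supported on disjoint proper clopen subsets of $\mathcal{C}$ commute), and compose with the embedding $V \hookrightarrow \mathsf{A}(\G)$ of \autoref{thm:Vembeds} to obtain $\iota\colon \mathbb{F}_n \times \mathbb{F}_n \hookrightarrow \mathsf{A}(\G) \subseteq \mathsf{F}(\G)$. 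Set $H := \iota(H_P)$, a finitely generated subgroup of both $\mathsf{A}(\G)$ and $\mathsf{F}(\G)$. The reduction is now immediate: given a word $w$ in $x_1,\dots,x_n$, compute a word over a fixed finite generating set of $\mathsf{A}(\G)$ (respectively $\mathsf{F}(\G)$) representing $\iota\big((w,1)\big)$; then this element lies in $H$ if and only if $w = 1$ in $P$. An algorithm deciding membership in $H$ would therefore solve the word problem of $P$, which is impossible, and hence neither $\mathsf{A}(\G)$ nor $\mathsf{F}(\G)$ has solvable generalised word problem.

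The step demanding genuine care --- and the main, if modest, obstacle --- is \emph{effectiveness}: the reduction $w \mapsto \iota\big((w,1)\big)$ must be realised as a computable map into words over the chosen generating set. Mihailova's subgroup and the embedding $\mathbb{F}_n \times \mathbb{F}_n \hookrightarrow V$ are visibly algorithmic, so the only point is whether the embedding $V \hookrightarrow \mathsf{A}(\G)$ furnished by \autoref{thm:Vembeds} can be taken so. It can: there $V$ is embedded by passing to a reduction $\G|_Y$ with $[Y] = 0$ in homology and applying Matui's proof of \cite[Proposition~5.14]{Mat_etale_2016}, which realises the standard generators of $V$ by an explicit finite manipulation of compact open bisections of $\G$; this manipulation uses only the combinatorial data describing $\G$, which is precisely the data one needs in hand for $\mathsf{F}(\G)$ to be finitely generated with solvable word problem. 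For all of the concrete families treated later in the paper --- shifts of finite type, graph groupoids, Katsura--Exel--Pardo groupoids, and the groupoids arising from Li's Garside framework --- this is completely explicit.
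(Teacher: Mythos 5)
Your argument is correct and is essentially the paper's proof with the black boxes opened: the paper simply notes that an unsolvable generalised word problem passes to containing (finitely generated) groups and that $V$ has one by Mihailova's theorem \cite{russian}, which is exactly your fibre-product reduction pushed through the embedding of \autoref{thm:Vembeds}. The one superfluous step is your worry about effectiveness of $V\hookrightarrow\mathsf{A}(\G)$: since $\mathbb{F}_n\times\mathbb{F}_n$ is finitely generated, one may simply fix once and for all words over the chosen generating set of $\mathsf{A}(\G)$ (resp.\ $\mathsf{F}(\G)$) representing the images of its finitely many generators and hard-code them into the reduction, so no computability of the embedding itself is required.
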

\begin{proof}
Note that 
unsolvable generalised word problems are inherited by containing groups. Thus, it suffices to show the statement for the alternating group. In turn, since $\mathsf{A}(\G)$ contains $V$ by \autoref{thm:Vembeds}, it suffices to argue that $V$ has an unsolvable generalized word problem; this follows from \cite{russian}. 
\end{proof}

In contrast to the above corollary, in many cases $\mathsf{F}(\G)$ 
\emph{does} have a solvable word problem, for example the topological full groups of many groupoids that arise as left regular representations of Garside Categories (see \cite[Corollary~C]{li2021left}) are known to have solvable word problem. Therefore, we obtain infinitely many nonisomorphic groups spanning many families that demonstrate this boundary between the word problem and the generalised word problem. 

\subsection{Vigor}
Note that $\mathsf{A}(\G)$ is vigorous whenever $\mathcal{G}$ is an \'etal, Cantor, purely infinite minimal groupoid, by \autoref{thm:Vembeds}. 
In fact, vigor of the derived subgroup is \textit{equivalent} to the groupoid being purely infinite and minimal even in the effective case. 

\begin{prop}\label{lma:DGvigor}\label{purely infinite if and only if vigor} 
Let $\G$ be an \'etale, effective Cantor groupoid. 
Then the following are equivalent:
\begin{enumerate}
    \item $\G$ is purely infinite and minimal
    \item One (equivalently, both) of $\mathsf{A}(\G)$ or $\mathsf{F}(\G)$ is vigorous.
\end{enumerate}  
\end{prop}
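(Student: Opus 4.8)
The plan is to prove \autoref{lma:DGvigor} by combining \autoref{thm:Vembeds} with \autoref{lma:PIcompact and open}, treating it as essentially a packaging of results already available. Observe first that the two assertions inside condition (2) — vigor of $\mathsf{A}(\G)$ and vigor of $\mathsf{F}(\G)$ — are not obviously equivalent a priori, so I would be careful to set up the logical structure as a cycle: (1) $\Rightarrow$ ``$\mathsf{A}(\G)$ is vigorous'' $\Rightarrow$ ``$\mathsf{F}(\G)$ is vigorous'' $\Rightarrow$ (1). This is cleaner than proving the biconditional with ``one (equivalently, both)'' directly.

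For (1) $\Rightarrow$ ``$\mathsf{A}(\G)$ is vigorous'': assume $\G$ is purely infinite and minimal. Let $X, Y_1, Y_2 \subseteq \G^{(0)}$ be compact and open with $X \neq \G^{(0)}$, $Y_2 \neq \emptyset$, and $Y_1, Y_2 \subseteq X$. Apply \autoref{thm:Vembeds}(2) with the nontrivial compact open set $X$ to obtain an embedding $\varphi_X \colon V \hookrightarrow \mathsf{A}(\G)$ with $X \subseteq \supp(\varphi_X(V))$ and such that $V$ acts vigorously on $S := \supp(\varphi_X(V))$. Since $X \subseteq S$ and $X \neq \G^{(0)}$, and since $Y_1, Y_2 \subseteq X \subseteq S$, vigor of the $V$-action on $S$ supplies $v \in V$ which is the identity on $S \setminus X$ and satisfies $\varphi_X(v)(Y_1) \subseteq Y_2$; note $\varphi_X(v)$ is the identity on $\G^{(0)} \setminus S$ by definition of support, hence the identity on all of $\G^{(0)} \setminus X$. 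Thus $g = \varphi_X(v) \in \mathsf{A}(\G)$ witnesses vigor for $(X, Y_1, Y_2)$. The implication ``$\mathsf{A}(\G)$ is vigorous'' $\Rightarrow$ ``$\mathsf{F}(\G)$ is vigorous'' is immediate since $\mathsf{A}(\G) \subseteq \mathsf{F}(\G)$: the same witness $g$ works. Finally, ``$\mathsf{F}(\G)$ is vigorous'' $\Rightarrow$ (1): given nontrivial compact open $X, Y \subseteq \G^{(0)}$ with $X \neq \G^{(0)}$ and $Y \neq \emptyset$, apply vigor to the triple $(X, X, Y)$ (legitimate since $X, Y \subseteq X$ is false in general — here I should instead take the triple $(X \cup Y', X, Y')$ after shrinking $Y$ to $Y' \subseteq Y$ so that $X \cup Y' \neq \G^{(0)}$, exactly as in the proof of (2) $\Rightarrow$ (1) of \autoref{thm:Vembeds}) to obtain $g \in \mathsf{F}(\G)$ with $g(X) \subseteq Y' \subseteq Y$, which is condition (3) of \autoref{lma:PIcompact and open}, hence $\G$ is purely infinite and minimal.

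The main subtlety — really the only one — is the bookkeeping in the last implication: vigor requires $Y_1, Y_2$ to sit inside $X$, so to extract ``$g(X) \subseteq Y$'' one cannot directly feed $(X, X, Y)$ unless $Y \subseteq X$, and in general $X$ and $Y$ are unrelated compact open sets. The fix is the standard shrinking trick used already in \autoref{thm:Vembeds}: replace $Y$ by a smaller nonempty compact open $Y' \subseteq Y$ with $X \cup Y' \subsetneq \G^{(0)}$ (possible because $\G^{(0)}$ is a Cantor space, hence has no isolated points and is not covered by any two proper clopen sets once one shrinks appropriately — more carefully, pick any point outside $X$, take a small clopen neighbourhood of it disjoint from $X$ and intersect with $Y$ if needed), set $Z = X \cup Y'$, and apply vigor of $\mathsf{F}(\G)$ to the triple $(Z, X, Y')$. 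This is routine, and everything else is a direct citation, so the proof should be short.

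\begin{proof}
We show (1) $\Rightarrow$ ``$\mathsf{A}(\G)$ is vigorous'' $\Rightarrow$ ``$\mathsf{F}(\G)$ is vigorous'' $\Rightarrow$ (1); this establishes the equivalence together with the parenthetical ``equivalently, both''.

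Assume (1). Let $X, Y_1, Y_2 \subseteq \G^{(0)}$ be compact and open with $X \neq \G^{(0)}$, $Y_2 \neq \emptyset$ and $Y_1, Y_2 \subseteq X$. Since $X$ is nontrivial, compact and open, \autoref{thm:Vembeds} provides an embedding $\varphi_X \colon V \hookrightarrow \mathsf{A}(\G)$ such that $S := \supp(\varphi_X(V))$ contains $X$ and $V$ acts vigorously on $S$. As $X \subsetneq \G^{(0)}$ we have $X \subsetneq S$, and $Y_1, Y_2 \subseteq X \subseteq S$ with $Y_2 \neq \emptyset$; applying vigor of the $V$-action on $S$ to the triple $(X, Y_1, Y_2)$ yields $v \in V$ which is the identity on $S \setminus X$ and satisfies $\varphi_X(v)(Y_1) \subseteq Y_2$. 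Since $\varphi_X(v)$ is the identity on $\G^{(0)} \setminus S$ by definition of $S$, it is the identity on all of $\G^{(0)} \setminus X$, so $g := \varphi_X(v) \in \mathsf{A}(\G)$ witnesses vigor for $(X, Y_1, Y_2)$. Hence $\mathsf{A}(\G)$ is vigorous.

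Since $\mathsf{A}(\G) \subseteq \mathsf{F}(\G)$, if $\mathsf{A}(\G)$ is vigorous then so is $\mathsf{F}(\G)$: any witness in $\mathsf{A}(\G)$ is a witness in $\mathsf{F}(\G)$.

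Finally, assume $\mathsf{F}(\G)$ is vigorous. Let $X, Y \subseteq \G^{(0)}$ be compact and open with $X \neq \G^{(0)}$ and $Y \neq \emptyset$; by \autoref{lma:PIcompact and open} it suffices to produce $\alpha \in \mathsf{F}(\G)$ with $\alpha(X) \subseteq Y$. Choose a point $z \in \G^{(0)} \setminus X$ and a compact open neighbourhood $W$ of $z$ with $W \cap X = \emptyset$; since $\G^{(0)}$ is a Cantor space we may shrink $W$ so that $W \subsetneq \G^{(0)} \setminus X$, and then pick a nonempty compact open $Y' \subseteq Y$ with $Y' \cap W = \emptyset$, if necessary shrinking $Y'$ further so that $X \cup Y' \cup W \neq \G^{(0)}$ (possible as $W$ has nonempty interior complement by construction). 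Set $Z := X \cup Y'$, which is compact, open and satisfies $Z \neq \G^{(0)}$ because $W \cap Z = \emptyset$. Apply vigor of $\mathsf{F}(\G)$ to the triple $(Z, X, Y')$, which is legitimate since $X, Y' \subseteq Z$ and $Y' \neq \emptyset$: there is $\alpha \in \mathsf{F}(\G)$ with $\alpha(X) \subseteq Y' \subseteq Y$. By \autoref{lma:PIcompact and open}, $\G$ is purely infinite and minimal, which is (1).
\end{proof}
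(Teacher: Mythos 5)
Your overall strategy is sound and the cycle (1) $\Rightarrow$ ``$\mathsf{A}(\G)$ vigorous'' $\Rightarrow$ ``$\mathsf{F}(\G)$ vigorous'' $\Rightarrow$ (1) is actually a cleaner way to handle the ``one (equivalently, both)'' phrasing than what the paper does. But your route for the forward implication differs from the paper's: you deduce vigor of $\mathsf{A}(\G)$ from \autoref{thm:Vembeds}, whereas the paper gives a direct construction -- it splits $Y_2\setminus Y_1$ into two nonempty clopen pieces $Z_{2,1}\sqcup Z_{2,2}$, applies \autoref{lma:PIcompact and open} twice to get bisections $Y_1\to Z_{2,1}\to Z_{2,2}$, and assembles an order-three multisection supported in $X$. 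The paper's version is self-contained, produces a witness that is visibly in $\mathsf{A}(\G)$ by \autoref{df:A(G)} (no appeal to $\mathsf{A}(\G)=\mathsf{D}(\G)$ and perfectness of $V$ is needed), and does not lean on the machinery behind \autoref{thm:Vembeds} (Matui's embedding result). Your version is shorter but inherits that dependency. For (2) $\Rightarrow$ (1) you verify condition (3) of \autoref{lma:PIcompact and open} and you are, to your credit, more careful than the paper about the requirement $X\cup Y'\neq\G^{(0)}$ before invoking vigor.

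Two points in your write-up do not quite work as stated, though both are easily patched. First, ``as $X\subsetneq\G^{(0)}$ we have $X\subsetneq S$'' is a non sequitur: the statement of \autoref{thm:Vembeds} only guarantees $X\subseteq\mathrm{supp}(\varphi_X(V))=S$, and vigor of the $V$-action on $S$ cannot be applied to the triple $(X,Y_1,Y_2)$ if $X=S$. The fix is to first enlarge $X$ to a compact open $X'$ with $X\subsetneq X'\subsetneq\G^{(0)}$ and apply \autoref{thm:Vembeds} to $X'$; the resulting witness is still the identity off $X'$, but not off $X$, so you should in fact apply vigor on $S'\supseteq X'\supsetneq X$ to the triple $(X,Y_1,Y_2)$, which is now legitimate since $X\neq S'$. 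Second, in the last implication your recipe chooses $W$ before $Y'$ and then asks for a nonempty compact open $Y'\subseteq Y$ disjoint from $W$; this fails if $Y\subseteq W$. The robust version of the shrinking trick is: if $X\cup Y\neq\G^{(0)}$ take $Y'=Y$; otherwise $\G^{(0)}\setminus X$ is a nonempty clopen subset of $Y$, split it into two nonempty clopen pieces and let $Y'$ be one of them, so that $X\cup Y'$ misses the other. With these repairs your proof is correct.
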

\begin{proof}
Note that vigor of $\mathsf{A}(\G)$ implies vigor of $\mathsf{F}(\G)$. Thus, it suffices
to show that (1) is equivalent to vigor of $\mathsf{A}(\G)\leq \mathrm{Homeo}(\G^{(0)})$.

(1) implies (2).
This proof is entirely constructive. Let $X, Y_1,Y_2 \subseteq \G^{(0)} $ be compact and open sets with $Y_1,Y_2 \subseteq X$, $Y_1\neq X$ and $Y_2\neq \emptyset$. We aim to find a multisection $\alpha\in\mathsf{F}(\G)$ of order 3 such that $\alpha$ is the identity outside of $X$ and takes $Y_1$ inside $Y_2$. Write $Y_2 \setminus Y_1$ as a nontrivial disjoint union $Y_2 \setminus Y_1=Z_{2,1}\sqcup Z_{2,2}$ of compact and open sets. Since $\G$ is minimal and purely infinite,
part~(2) of \autoref{lma:PIcompact and open} provides us with a compact open bisection $B_1\subseteq \G$ with $s(B_1)=Y_1$ and $r(B_1) \subseteq Z_{2,1}$. Use
part~(2) of \autoref{lma:PIcompact and open}
again to find a compact open bisection $B_2$ with $s(B_2)=r(B_1) \subseteq Z_{2,1}$ and $r(B_2) \subseteq Z_{2,2}$. Note that $(B_2 B_1)^{-1} $ is also a bisection.  Set 
\[\alpha=\big((B_2B_1)^{-1} \cup B_1 \cup B_2\big) \cup \big(\G^{(0)} \setminus s(B_1) \cup r(B_1) \cup r(B_2)\big).\]
One readily checks that $\alpha$ is a full bisection, 
so it defines an element of $\mathsf{F}(\G)$. 
It is also clear that it defines a multisection in the sense of \autoref{df:A(G)}, since 
it satisfies
\[\alpha(Y_1)=Z_{2,1}, \ \ \alpha(Z_{2,1})=Z_{2,2}, \ \ \mbox{ and } \ \ \alpha(Z_{2,2})=Y_1,\]
while $\alpha$ acts trivially on the rest of $\G^{(0)} $.
Since 
$\alpha$ has order 3, it follows that $\alpha\in \mathsf{A}(\G)$.
Note that $\alpha$ is the identity outside of $X$, and that $\alpha(Y_1)= r(B_1) \subseteq  Z_{2,1} \subseteq Y_2$. This shows that $\mathsf{A}(\G)$ is vigorous.

(2) implies (1). 
We check condition~(2) of \autoref{lma:PIcompact and open}. 
Let $Y_1,Y_2$ be nonempty compact and open subsets of $\G^{(0)}$ with $Y_1\neq \mathcal{C}$. Use vigor with $X=\mathcal{C}$
to find $g \in \mathsf{A}(\G)$ with $\mathrm{supp}(g)=Y_1 \cup Y_2$ and $g(Y_1) \subseteq Y_2$. If $B\subseteq \G$ denotes the compact
open full bisection determining $g$, then $B|_{Y_1}$ is a compact
open bisection satisfying $s(B|_{Y_1})=Y_1$ and $r(B|_{Y_1})\subseteq Y_2$, as desired.  
\end{proof}

As an immediate consequence, we show that 
$\mathsf{F}(\G)$ enjoys a strengthening of the ICC 
property with respect to $\mathsf{D}(\G)$; this will be needed later.

\begin{cor}\label{cor:RelativeICC}
Let $\G$ be a minimal, purely infinite, \'etale Cantor groupoid. Then the $\mathsf{D}(\G)$-conjugacy class of every 
nontrivial element of $\mathsf{F}(\G)$ is infinite.
\end{cor}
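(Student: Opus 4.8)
The plan is to upgrade the ICC argument of \autoref{lemma:NoInvMeas}(2) to a relative statement, keeping track that the conjugating elements can be chosen inside $\mathsf{D}(\G)$. First I would record the two inputs: since $\G$ is purely infinite and minimal, the group $\mathsf{A}(\G)$ is vigorous by \autoref{purely infinite if and only if vigor}, and $\mathsf{A}(\G)\subseteq\mathsf{D}(\G)$ (indeed $\mathsf{A}(\G)=\mathsf{D}(\G)$ here by \autoref{minimal essentially principal then simple}). So it suffices, given $g\in\mathsf{F}(\G)\setminus\{1\}$, to exhibit infinitely many pairwise distinct elements of the form $hgh^{-1}$ with $h\in\mathsf{A}(\G)$.

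Fix $g\in\mathsf{F}(\G)\setminus\{1\}$ and pick $x\in\G^{(0)}$ with $g(x)\neq x$. Using Hausdorffness of $\G^{(0)}$, continuity of $g$, and the existence of a basis of compact open sets, I would choose a nonempty compact open $Y\subseteq\G^{(0)}$ with $x\in Y$ and $g(Y)\cap Y=\emptyset$; this forces $Y\neq\G^{(0)}$ and $g(Y)\neq\emptyset$. The complement $\G^{(0)}\setminus Y$ is then a nonempty compact open proper subset of $\G^{(0)}$, hence infinite (nonempty clopen subsets of a Cantor space have no isolated points), so it contains a sequence $(Z_n)_{n\in\N}$ of pairwise disjoint nonempty compact open sets. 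For each $n$, apply vigor of $\mathsf{A}(\G)$ with outer set $X=\G^{(0)}\setminus Y$, source set $g(Y)$, and target $Z_n$ --- all legitimate, since $g(Y),Z_n\subseteq X$, $X\neq\G^{(0)}$ and $Z_n\neq\emptyset$ --- to obtain $h_n\in\mathsf{A}(\G)\subseteq\mathsf{D}(\G)$ which is the identity on $Y$ and satisfies $h_n(g(Y))\subseteq Z_n$.

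Finally, set $g_n=h_ngh_n^{-1}$, an element of the $\mathsf{D}(\G)$-conjugacy class of $g$. Since $h_n$, and hence $h_n^{-1}$, fixes $Y$ pointwise, one has $h_n^{-1}(Y)=Y$ and therefore $g_n(Y)=h_n\big(g(Y)\big)\subseteq Z_n$, a set which is nonempty because $g(Y)\neq\emptyset$ and $h_n$ is a homeomorphism. As the $Z_n$ are pairwise disjoint, the sets $g_n(Y)$ are pairwise disjoint and nonempty, so the $g_n$ are pairwise distinct; thus the $\mathsf{D}(\G)$-conjugacy class of $g$ is infinite. There is no serious obstacle in this argument; the one point that must be set up correctly is choosing the target sets $Z_n$ \emph{disjoint from} $Y$ rather than inside it, so that an element of $\mathsf{A}(\G)$ supported off $Y$ can transport $g(Y)$ into them while fixing $Y$ --- this is exactly what forces the conjugates $g_n$ to move $Y$ to genuinely different locations.
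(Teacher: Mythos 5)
Your proof is correct and follows essentially the same route as the paper's: choose a compact open $Y$ with $g(Y)\cap Y=\emptyset$, use vigor of $\mathsf{A}(\G)=\mathsf{D}(\G)$ to produce conjugators supported off $Y$ that carry $g(Y)$ into infinitely many pairwise disjoint targets, and distinguish the resulting conjugates by where they send $Y$. Your placement of the target sets $Z_n$ inside $\G^{(0)}\setminus Y$ is in fact the consistent choice --- the paper's write-up nominally takes them inside $Y_1$, which clashes with the conjugators being supported off $Y_1$ --- so the point you flag at the end is exactly the right one and your version is the cleaner rendering of the argument.
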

\begin{proof}
This proof is similar to that of 
\autoref{lemma:NoInvMeas}. 
Let $g\in \mathsf{F}(\G)\setminus\{1\}$.
Find a compact and open subset $Y_1\subseteq\G^{(0)}$ such that  $g(Y_1) \cap Y_1= \emptyset$, and
find an infinite family $\{Y_2^{(n)}\colon n\in\N\}$ of pairwise disjoint 
compact and open subsets of $Y_1$. 
Use vigor of $\mathsf{A}(\G)$ to find, for every $n\in\N$,
a group element $h_n\in \mathsf{A}(\G)$ with support contained in $\mathcal{G}^{(0)}\setminus Y_1$, 
such that $h_n(g(Y_1)) \subseteq Y_2^{(n)}$. 
One readily checks that 
$h_n^{-1}gh_n\neq h_m^{-1}gh_m$ if $n\neq m$, as they map $Y_1$ to different subsets (namely $Y_2^{(n)}$ and $Y_2^{(m)}$, respectively). This finishes the proof.\end{proof}

Expansivity is a condition for \'etale groupoids which
was introduced by Nekrashevych in \cite{Nek_simple_2019}
in order to show that certain alternating groups
in topological full groups are 2-generated. We
recall his notion below:
  
\begin{df}
Let $\G$ be an \'etale groupoid with Cantor unit space.
\label{expansive groupoid}
\begin{itemize}
    \item[(i)] A compact set $K \subseteq \G$ is said to be 
    \emph{generating} if $\G= \bigcup_{n \in \mathbb{N}}(K \cup K^{-1})^n$.
    \item[(ii)] A finite collection $\mathcal{B}$ of compact open bisections in $\G$ is said to be \emph{expansive} if $\bigcup_{n \in \mathbb{N}} (\mathcal{B} \cup \mathcal{B}^{-1})^n$ is a basis for the topology of $\G$. 
        \label{expansive nekrashevych}
\end{itemize}

We say that $\G$ is \emph{expansive} if there are a compact generating subset $K\subseteq \G$ and a finite cover $\mathcal{B}$ of $K$ which is expansive. 
\end{df}

We obtain the following consequence of our results 
in combination with \cite{Nek_simple_2019, BleEllHyd_sufficient_2020}. 

\begin{cor}\label{cor:twoGeneration}
Let $\G$ be an expansive, ample, essentially principal, minimal, purely infinite groupoid. Given $n>2$, there exist $g_1,g_2 \in \mathsf{A}(\G)$ such that $g_1$ has order $n$, 
the order of 
$g_2$ is finite, and $\mathsf{A}(\G)$ is generated by $\{g_1,g_2\}$. 
\end{cor}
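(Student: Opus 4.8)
The plan is to combine two external results: the two-generation criterion of Bleak--Elliott--Hyde for vigorous groups, and Nekrashevych's theorem that expansivity forces $\mathsf{A}(\G)$ to be finitely generated. First I would invoke \autoref{purely infinite if and only if vigor}: since $\G$ is ample, essentially principal, minimal and purely infinite, the group $\mathsf{A}(\G) \leq \mathrm{Homeo}(\G^{(0)})$ is vigorous. Next, by \autoref{minimal essentially principal then simple}, $\mathsf{A}(\G) = \mathsf{D}(\G)$ is simple (and it is infinite, since $V$ embeds into it by \autoref{thm:Vembeds}). Then I would cite Nekrashevych's result from \cite{Nek_simple_2019}: for an expansive \'etale groupoid with Cantor unit space that is minimal and purely infinite, the alternating group $\mathsf{A}(\G)$ is finitely generated. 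So $\mathsf{A}(\G)$ is a finitely generated, simple, vigorous group.

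The heart of the argument is then to apply the main theorem of \cite{BleEllHyd_sufficient_2020}, which states that a finitely generated, simple, vigorous subgroup of $\mathrm{Homeo}(\mathcal{C})$ is two-generated, and in fact (this is the stronger form of their result one needs here) for every $n > 2$ one may choose the two generators so that one has order exactly $n$ and the other has finite order. I would quote this in the precise form: Bleak--Elliott--Hyde show that in a finitely generated simple vigorous group, for any prescribed $n \geq 2$ there is a generating pair $\{g_1, g_2\}$ with $g_1$ of order $n$ and $g_2$ of finite order (their proof produces torsion generators using the vigor hypothesis to build the requisite elements of prescribed finite order supported on small clopen sets). Applying this to $\Gamma = \mathsf{A}(\G)$ with the given $n$ yields $g_1, g_2 \in \mathsf{A}(\G)$ as required.

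The main obstacle — or rather the point requiring care — is bookkeeping about which exact statement from \cite{BleEllHyd_sufficient_2020} and \cite{Nek_simple_2019} is being cited, and making sure the hypotheses line up: Nekrashevych needs the groupoid to be expansive with Cantor unit space and the orbits infinite (guaranteed here by pure infiniteness and minimality), while Bleak--Elliott--Hyde need simplicity, finite generation, and vigor as a subgroup of $\mathrm{Homeo}(\mathcal{C})$ — all of which we have assembled. There is no genuinely new construction to perform; the corollary is a matter of checking that the class of groupoids in the hypothesis feeds correctly into both black boxes. One small subtlety worth a sentence: one should note that $\mathsf{A}(\G)$ is genuinely infinite (so that the order-$n$ and finite-order elements are not forced to generate a finite group), which follows from the embedding $V \hookrightarrow \mathsf{A}(\G)$ of \autoref{thm:Vembeds}.

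\begin{proof}
Set $\mathcal{C} = \G^{(0)}$, which is a Cantor space. By \autoref{purely infinite if and only if vigor}, the subgroup $\mathsf{A}(\G) \leq \mathrm{Homeo}(\mathcal{C})$ is vigorous, and by \autoref{minimal essentially principal then simple} it is simple. Moreover $\mathsf{A}(\G)$ is infinite, since $V$ embeds into it by \autoref{thm:Vembeds}. Since $\G$ is expansive, \cite[Theorem~5.7]{Nek_simple_2019} shows that $\mathsf{A}(\G)$ is finitely generated. Thus $\mathsf{A}(\G)$ is a finitely generated, infinite, simple, vigorous subgroup of $\mathrm{Homeo}(\mathcal{C})$. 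By the main result of Bleak--Elliott--Hyde \cite[Theorem~1.2]{BleEllHyd_sufficient_2020}, for every $n > 2$ there exist $g_1, g_2 \in \mathsf{A}(\G)$ generating $\mathsf{A}(\G)$ such that $g_1$ has order $n$ and $g_2$ has finite order.
\end{proof}
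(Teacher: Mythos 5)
Your proof is correct and follows essentially the same route as the paper: establish that $\mathsf{A}(\G)$ is simple, vigorous (via \autoref{lma:DGvigor}), and finitely generated (via Nekrashevych's expansivity result), then feed it into the Bleak--Elliott--Hyde two-generation theorem with prescribed order $n$. The only discrepancies are the precise theorem numbers cited from \cite{Nek_simple_2019} and \cite{BleEllHyd_sufficient_2020} (the paper uses Theorem~5.6 and Theorem~1.12 respectively), which is immaterial since you correctly identify the statements needed.
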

\begin{proof}
Since $\G$ is minimal and essentially principal,
it follows from \cite[Theorem~1.1]{Nek_simple_2019} 
that $\mathsf{A}(\G)$ is simple. Moreover, since $\G$ is expansive,
\cite[Theorem~5.6]{Nek_simple_2019} implies 
that $\mathsf{A}(\G)=\mathsf{D}(\G)$ is finitely generated. By \autoref{lma:DGvigor}, $\mathsf{A}(\G)$ is vigorous, so the 
result follows from \cite[Theorem~1.12]{BleEllHyd_sufficient_2020}.
\end{proof}

\subsection{Compressibility}
The existence of compressible actions was shown to have relevance to the representation theory of Thompson-like groups in \cite{DudMed_finite_2014}. We recall the definition below:

\begin{df}\label{df:ComprAction}
An action of a discrete group $\Gamma$ on a locally compact Hausdorff space $X$ is said to be 
\emph{compressible}, if there exists a subbase $\mathcal{U}$ for the topology on $X$ such that: \begin{enumerate}
    \item for all $ g \in \Gamma$, there exists $U \in \mathcal{U}$ such that $\mathrm{supp}(g) \subseteq U$;
    \item for all $ U_1, U_2 \in \mathcal{U}$, there exists $g \in \Gamma$ such that $g(U_1) \subseteq U_2$;
    \item for all $ U_1,U_2,U_3 \in \mathcal{U}$ with $\overline{U}_1 \cap \overline{U}_2 = \emptyset$, there exists $g \in \Gamma$ such that $g(U_1) \cap U_3 = \emptyset$ and $\mathrm{supp}(g) \cap U_2= \emptyset$;
    \item for all $ U_1, U_2 \in \mathcal{U }$, there exists $U_3 \in \mathcal{U}$ such that $U_1 \cup U_2 \subseteq U_3 $.\end{enumerate}
\end{df}
\begin{rem}
Note that if $\Gamma \acts X$ is compressible, then $X$ cannot be compact. This is because $X$ cannot belong to $\mathcal{U}$ by (2), and at the same time $X$ cannot be written as a finite union of elements of $\mathcal{U}$ by (4). Therefore often when dealing with a group acting by homeomorphisms on the Cantor space, we consider point-stabiliser subgroups. 
\end{rem}
Compressibility and vigor are closely related notions. For example, the 
following is 
essentially a generalisation of the discussion in \cite[Subection~3.2]{DudMed_finite_2014}. 

\begin{lma}\label{lma:VigCompress}
Let $\mathcal{C}$ be a Cantor space and let $D \leq \mathrm{Homeo}(\mathcal{C})$ be a vigorous subgroup. For $x_0 \in \mathcal{C}$, set 
\[D_{x_0}:=\{ g \in D \colon \text{there exists a neighbourhood } Y \text{ of } x_0 \text{ such that } g|_Y=\id_Y\}.\] 
Then $D_{x_0}\curvearrowright \mathcal{C}\setminus\{x_0\}$ is compressible. \end{lma}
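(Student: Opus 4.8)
The plan is to produce an explicit subbase $\mathcal{U}$ for the topology of $\mathcal{C} \setminus \{x_0\}$ witnessing compressibility of $D_{x_0}$, and to check the four axioms of \autoref{df:ComprAction} using vigor of $D$ together with the observation that a homeomorphism of $\mathcal{C}$ supported away from $x_0$ automatically lies in $D_{x_0}$. The natural choice is to take $\mathcal{U}$ to be the collection of all compact open subsets $U \subseteq \mathcal{C}$ with $x_0 \notin \overline{U} = U$ (equivalently, $x_0 \notin U$), since $\mathcal{C}$ is totally disconnected so these form a basis for $\mathcal{C} \setminus \{x_0\}$, and in particular a subbase. I should note at the outset that any $g \in D$ with $\mathrm{supp}(g) \subseteq U$ for such a $U$ is automatically in $D_{x_0}$, because the complement of $\overline{U}$ (equivalently a small clopen neighbourhood of $x_0$ disjoint from $U$) is then a neighbourhood of $x_0$ on which $g$ is the identity. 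This is the key point that lets me quote vigor of $D$ rather than working directly inside $D_{x_0}$.

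\textbf{Key steps.} First I would verify axiom (4): given $U_1, U_2 \in \mathcal{U}$, the set $U_3 = U_1 \cup U_2$ is compact, open, and still avoids $x_0$, so $U_3 \in \mathcal{U}$ and $U_1 \cup U_2 \subseteq U_3$ — immediate. Next, axiom (1): given $g \in D_{x_0}$, by definition there is a neighbourhood $Y$ of $x_0$ with $g|_Y = \id$, and by total disconnectedness we may shrink $Y$ to a compact open neighbourhood; then $U := \mathcal{C} \setminus Y$ is compact, open, avoids $x_0$, hence lies in $\mathcal{U}$, and clearly $\mathrm{supp}(g) \subseteq U$. For axiom (2): given $U_1, U_2 \in \mathcal{U}$ with $U_2 \neq \emptyset$, set $X = U_1 \cup U_2 \in \mathcal{U}$; since $x_0 \notin X$ we have $X \neq \mathcal{C}$, so vigor of $D$ (applied with $Y_1 = U_1$, $Y_2 = U_2$, ambient clopen $X$) gives $g \in D$ supported in $X$ with $g(U_1) \subseteq U_2$; and $g$ is supported in $X \subseteq \mathcal{C} \setminus \{x_0\}$ so $g \in D_{x_0}$. (If $U_2 = \emptyset$ one needs $U_1 = \emptyset$ too for $g(U_1) \subseteq U_2$ to be achievable; I would either exclude $\emptyset$ from $\mathcal{U}$ from the start or handle it trivially, whichever the surrounding conventions prefer.) Finally, axiom (3): given $U_1, U_2, U_3 \in \mathcal{U}$ with $\overline{U_1} \cap \overline{U_2} = U_1 \cap U_2 = \emptyset$, I want $g \in D_{x_0}$ with $g(U_1) \cap U_3 = \emptyset$ and $\mathrm{supp}(g) \cap U_2 = \emptyset$. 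The idea: work inside the ambient clopen set $X = U_1 \cup U_3 \setminus U_2$ (or a clopen set containing $U_1$ and avoiding $U_2$ and $x_0$), choose a nonempty clopen $W \subseteq X$ disjoint from $U_3$ — such $W$ exists provided $U_1 \not\subseteq U_3$ or, more robustly, by first enlarging $X$ using minimality/pure-infiniteness-type room, or simply because $\mathcal{C} \setminus (U_2 \cup U_3 \cup \{x_0\})$ is a nonempty clopen set (it is nonempty since $\mathcal{C}$ is infinite) — then apply vigor to push $U_1$ into $W$, keeping support in a clopen set disjoint from $U_2$ and from $x_0$.

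\textbf{Main obstacle.} The routine axioms are (1), (2), (4); the delicate one is (3), specifically guaranteeing the existence of a suitable "landing zone" clopen set $W$ that simultaneously is nonempty, avoids $U_3$, and sits inside a clopen set disjoint from $U_2 \cup \{x_0\}$ into which $U_1$ can be vigorously compressed. The subtlety is that $U_1, U_2, U_3$ are essentially arbitrary (subject only to $\overline{U_1} \cap \overline{U_2} = \emptyset$), so I cannot assume $U_3$ is small; I expect to handle this by choosing the ambient clopen set for the vigor application to be, say, a clopen neighbourhood of $U_1$ that is disjoint from $U_2$ and from $x_0$ (possible since $U_1 \cap U_2 = \emptyset$ and $x_0 \notin U_1$), observing that within $\mathcal{C}$ there is always a nonempty clopen set disjoint from $U_2 \cup U_3 \cup \{x_0\}$ — because if no such set existed then $\mathcal{C} = U_2 \cup U_3 \cup \{x_0\}$, forcing $\mathcal{C} \setminus (U_2 \cup U_3)$ to be the single point $x_0$, which is impossible as $U_2 \cup U_3$ is clopen and $\mathcal{C}$ has no isolated points — and then invoking vigor twice if necessary (once to move $U_1$ away from $U_3$, once to ensure the support condition). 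This bookkeeping, mirroring \cite[Subsection~3.2]{DudMed_finite_2014}, is where the care lies, but each individual application is just a direct citation of \autoref{df:Vigorous}.
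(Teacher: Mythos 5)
Your proposal is correct and follows essentially the same route as the paper: take $\mathcal{U}$ to be the compact open subsets avoiding $x_0$ (closed under finite unions), observe that any element of $D$ supported in such a set lies in $D_{x_0}$, and verify the four axioms by direct applications of vigor. The only cosmetic difference is in axiom (3), where the paper pushes $U_1$ into an ``annulus'' $Y_n\setminus Y_{n+1}$ near $x_0$ taken from a fixed decreasing sequence, whereas you push it into any nonempty clopen set disjoint from $U_2\cup U_3\cup\{x_0\}$ (which exists since $\mathcal{C}\setminus(U_2\cup U_3)$ is clopen, contains $x_0$, and cannot be a singleton); a single application of vigor with ambient set $U_1\cup W$ suffices, so your hedge about invoking vigor twice is unnecessary.
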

\begin{proof}
For an open set $Y\subseteq \mathcal{C}$, we set $D_Y=\{g\in D\colon g|_Y=\id_Y\}$.
Fix $x_0 \in \mathcal{C}$, and let $(Y_n)_{n\in\N}$ be a strictly decreasing sequence of compact and open subsets of $\mathcal{C}$ such that $\bigcap_{n=1}^\infty Y_n=\{x_0\}$. 
Then $D_{x_0}=\bigcup_{n=1}^\infty D_{Y_n}$. 

Let $\mathcal{U}$ be any basis of compact open subsets for the topology on $\mathcal{C} \setminus \{x_0\}$ which is closed under finite unions. 
We verify properties (1) through (4) in \autoref{df:ComprAction} for 
$D_{x_0}\curvearrowright \mathcal{C} \setminus \{x_0\}$ below.

(1). Let $g \in D_{x_0}$, and find a neighbourhood $Y$ of $x_0$ such that $g|_Y=\id_Y$. 
Find a basic open set $U\in \mathcal{U}$ such that $U \subseteq Y \setminus \{x_0\}$. Since 
$\mathcal{C}\setminus Y$ is compact, there exist $U_1,\ldots,U_n\in \mathcal{U}$ such
that the set $U=U_1\cup\ldots \cup U_n$ contains $\mathcal{C}\setminus Y$. Since 
$\mathcal{U}$ is closed under finite unions, it follows that $U$ belongs to $\mathcal{U}$. Since $g$ is supported on $U$, this shows (1).

(2). Let $U_1,U_2 \in \mathcal{U}$ be given. Since $x_0 \notin U_1, U_2$, this follows immediately by
using vigor of $D\curvearrowright \mathcal{C}$. 

(3). 
Let $U_1,U_2,U_3 \in \mathcal{U}$ satisfy $\overline{U}_1 \cap \overline{U}_2=\emptyset$. Since $U_1 \cup U_2 \cup U_3 \subseteq \mathcal{C} \setminus \{x_0\}$, we may find $n\in\N$ large enough so that $U_j \cap Y_n=\emptyset$ for $j=1,2,3$. Set $X=\overline{U_1} \cup Y_n \setminus Y_{n+1}$. Noting that $U_2 \cap X=\emptyset$ and that $X^c$ contains the neighbourhood $Y_{n+1}$ of $x_0$, we use vigor to find an element $g \in D_{x_0}$ such that 
\[g(U_1) \subseteq g(\overline{U_1}) \subseteq Y_n \setminus Y_{n+1} \subseteq Y_n\subseteq (U_1 \cup U_2 \cup U_3)^c \subseteq U_3^c.\]
Moreover, the support of $g$, which is a subset of $X$, contains a neighbourhood of $x_0$ and is contained in $U_2^c$. Thus $g \in D_{x_0}$ satisfies the required properties. 
    
(4). This follows by construction, as $\mathcal{U}$ is closed under finite unions. 
\end{proof}

We now obtain further characterisations of pure infiniteness for minimal groupoids:

\begin{lma}\label{lemma:EquivalencesCompressible}
Let $\G$ be an essentially principal, \'etale Cantor groupoid. Then, the following are equivalent:
\begin{enumerate}
    \item $\G$ is purely infinite and minimal.
    \item For all $x_0 \in \G^{(0)}$, the action $\mathsf{A}(\G)_{x_0} \acts \G^{(0)} \setminus \{x_0\}$ 
    is compressible  \label{compressible if and only if pureinf}
\end{enumerate}
\end{lma}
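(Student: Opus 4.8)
The strategy is to prove both implications by transferring between the groupoid-theoretic conditions of \autoref{lma:PIcompact and open} and the dynamical condition of compressibility, using \autoref{lma:VigCompress} and \autoref{purely infinite if and only if vigor} as the main engines. For the direction $(1)\Rightarrow(2)$: assume $\G$ is purely infinite and minimal. Then by \autoref{purely infinite if and only if vigor}, $\mathsf{A}(\G)\leq\mathrm{Homeo}(\G^{(0)})$ is a vigorous subgroup of homeomorphisms of the Cantor space $\G^{(0)}$. Fix $x_0\in\G^{(0)}$. Applying \autoref{lma:VigCompress} with $D=\mathsf{A}(\G)$ and this choice of $x_0$, we obtain that $D_{x_0}\curvearrowright\G^{(0)}\setminus\{x_0\}$ is compressible, where $D_{x_0}$ is exactly the group $\mathsf{A}(\G)_{x_0}$ appearing in the statement (the definitions coincide, since a ``neighbourhood $Y$ of $x_0$ on which $g$ restricts to the identity'' can always be shrunk to a compact open such neighbourhood in the Cantor space). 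This is essentially immediate once the identifications are checked.

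For the direction $(2)\Rightarrow(1)$: assume that for every $x_0$ the action $\mathsf{A}(\G)_{x_0}\curvearrowright\G^{(0)}\setminus\{x_0\}$ is compressible. By \autoref{purely infinite if and only if vigor}, it suffices to show that $\mathsf{A}(\G)$ is vigorous on $\mathcal{C}=\G^{(0)}$. So let $X,Y_1,Y_2\subseteq\mathcal{C}$ be compact and open with $X\neq\mathcal{C}$, $Y_2\neq\emptyset$ and $Y_1,Y_2\subseteq X$; we must produce $g\in\mathsf{A}(\G)$ supported in $X$ with $g(Y_1)\subseteq Y_2$. Since $X\neq\mathcal{C}$, pick $x_0\in\mathcal{C}\setminus X$. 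Then $X$, $Y_1$, $Y_2$ are all contained in $\mathcal{C}\setminus\{x_0\}$, and we let $\mathcal{U}$ be the subbase of $\mathcal{C}\setminus\{x_0\}$ witnessing compressibility of $\mathsf{A}(\G)_{x_0}$. Here there is a small wrinkle: compressibility is stated for a \emph{subbase}, so $Y_1$, $Y_2$ and $X$ need not themselves lie in $\mathcal{U}$. I would handle this by first enlarging, using property (4) and finite intersections, to reduce to elements of the base generated by $\mathcal{U}$, and then observe that properties (2) and (3) of \autoref{df:ComprAction} are exactly what is needed to build a vigorous move: property (2) gives an element of $\mathsf{A}(\G)_{x_0}$ pushing $Y_1$ into $Y_2$, while property (3) (together with $x_0\notin\overline{X}$, so $\overline{X}$ and a small neighbourhood of $x_0$ are disjoint) lets us arrange that the support stays inside $X$ rather than spilling into $\mathcal{C}\setminus X$. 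Since $\mathsf{A}(\G)_{x_0}\leq\mathsf{A}(\G)$, the resulting element lies in $\mathsf{A}(\G)$, giving vigor.

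\textbf{Main obstacle.} The routine part is the bookkeeping of neighbourhoods of $x_0$; the genuinely delicate step is $(2)\Rightarrow(1)$, specifically passing from the abstract conditions (1)--(4) of compressibility of $\mathsf{A}(\G)_{x_0}$ phrased in terms of an arbitrary subbase $\mathcal{U}$ to the specific sets $X,Y_1,Y_2$ of the vigor definition, and in particular ensuring the support-control needed for vigor (the element $g$ must be the identity on all of $\mathcal{C}\setminus X$, not merely near $x_0$). The cleanest route is probably to combine property (2) applied to a base element containing $Y_1$ and one contained in $Y_2$, with property (3) applied to $X$ and a complementary region, choosing base elements from $\mathcal{U}$ carefully so that the supports compose correctly; one may also need to compose finitely many elements of $\mathsf{A}(\G)_{x_0}$, which is harmless since that group is closed under composition. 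Alternatively, and perhaps more economically, one can bypass vigor and verify condition~(3) of \autoref{lma:PIcompact and open} directly: given nontrivial compact open $X'\subsetneq\mathcal{C}$ and $Y'\neq\emptyset$, choose $x_0\notin X'$, shrink $Y'$ so that $X'\cup Y'\neq\mathcal{C}$ and both avoid a neighbourhood of $x_0$, and apply property~(2) of compressibility to find $\alpha\in\mathsf{A}(\G)_{x_0}\subseteq\mathsf{F}(\G)$ with $\alpha(X')\subseteq Y'$. This makes the implication essentially a one-line application of compressibility property~(2) plus \autoref{lma:PIcompact and open}, and is the version I would write up.
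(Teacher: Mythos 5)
Your proposal is correct and, in the version you say you would actually write up, follows essentially the same route as the paper: $(1)\Rightarrow(2)$ via vigor of $\mathsf{A}(\G)$ (\autoref{purely infinite if and only if vigor}) combined with \autoref{lma:VigCompress}, and $(2)\Rightarrow(1)$ by picking $x_0$ outside a shrunken $X\cup Y$ and applying property~(2) of compressibility together with \autoref{lma:PIcompact and open}. Your explicit attention to the subbase-versus-base wrinkle is if anything slightly more careful than the paper's own treatment, so no changes are needed.
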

\begin{proof}
If $\G$ is purely infinite and minimal, then $\mathsf{A}(\G)$ is vigorous by \autoref{lma:DGvigor}, and 
thus $\mathsf{A}(\G)_{x_0} \acts \G^{(0)} \setminus \{x_0\}$ is compressible for all $x_0\in \G^{(0)}$
by \autoref{lma:VigCompress}. Conversely, suppose that 
$\mathsf{A}(\G)_{x_0} \acts \G^{(0)} \setminus \{x_0\}$ is compressible for all $x_0\in \G^{(0)}$. 
Let $X,Y\subseteq \G^{(0)}$ be compact and open subsets. Shrinking $Y$ if necessary, we may assume that $X \cup Y$ is strictly smaller than $\G^{(0)}$. 
Fix $x_0 \in \G^{(0)} \setminus (X \cup Y)$, and use compressibility of 
$\mathsf{A}(\G)_{x_0}\curvearrowright \G^{(0)}\setminus\{x_0\}$ to find a subbasis $\mathcal{U}$ for the topology
of $\G^{(0)}\setminus\{x_0\}$ satisfying the conditions in \autoref{df:ComprAction}. 
By condition (4) in \autoref{df:ComprAction}, there exists $U_1
\in \mathcal{U}$ with $X\subseteq U_1$. Let $U_2 \in \mathcal{U}$ satisfy $U_2 \subseteq Y$, and use (2) in \autoref{df:ComprAction} to find $g \in \mathsf{A}(\G)_{x_0}$ such that $g(X) \subseteq g(U_1) \subseteq U_2 \subseteq Y$. Restricting the source of the bisection corresponding to $g$ to $X$, this shows that 
there is a compact and open bisection $B\subseteq \G^{(0)}$ satisfying $\alpha_B(X)\subseteq Y$. Since $X$ and 
$Y$ are arbitrary, \autoref{lma:PIcompact and open} shows that $\G^{(0)}$ is purely infinite and minimal, as 
desired.
\end{proof}

We now turn to the definition of proper 
characters in a group. We warn the reader that these are not characters in the 
sense of Pontryagin duality,
that is, they are not group homomorphisms to 
the unit circle, and they are only assumed to be
invariant under conjugation. 

\begin{df}
Let $\Gamma$ be a group. A \emph{character}
on $\Gamma$ is a map $\chi\colon \Gamma\to \mathbb{C}$ satisfying the following conditions:
    \begin{itemize}
        \item[(a)] $\chi(gh)=\chi(hg)$ for all $g,h \in \Gamma$;
        \item[(b)] $\chi(1)=1$;
        \item[(c)] for every finite collection $\{g_1,\ldots,g_n\}$ of elements in $\Gamma$, the $n \times n$ matrix with $(i,j)$-th entry $(\chi(g_ig_j^{-1}))$, is non-negatively definite.
    \end{itemize}
    
A character $\chi$ is called \textit{decomposable} if there exist characters $\chi_1,\chi_2$ and a real number $\lambda \in (0,1)$ such that $\chi=\lambda \chi_1 +(1-\lambda) \chi_2$. Otherwise, $\chi$ is called indecomposable. 

The \emph{regular character} is the indecomposable character given by $\chi(g)=0$ whenever $g \neq 1$. The \emph{identity character} is the indecomposable character given by $\chi(g)=1$ for all $g \in \Gamma$. 
We say that $\Gamma$ \emph{has no proper characters} if the only indecomposable characters are the identity character and the regular character.
\end{df}

Using this definition, we deduce the following useful fact about derived subgroups in purely infinite groupoids.

\begin{cor}\label{D has no proper characters}
Let $\G$ be a purely infinite and minimal Cantor groupoid. Then $\mathsf{A}(\G)$ has no proper characters. 
\end{cor}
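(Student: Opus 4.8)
The plan is to reduce the statement to the corresponding fact for the point stabilizers of the action $\mathsf{A}(\G)\acts\G^{(0)}$, where it is essentially due to Dudko--Medynets, and then transfer the conclusion to $\mathsf{A}(\G)$ using that this group is simple. Fix $x_0\in\G^{(0)}$ and set $H:=\mathsf{A}(\G)_{x_0}$. Since $\G$ is purely infinite and minimal, \autoref{lemma:EquivalencesCompressible} shows that $H$ acts compressibly on $\G^{(0)}\setminus\{x_0\}$, and the analysis of characters of groups admitting a compressible action in \cite{DudMed_finite_2014} then yields that $H$ has no proper characters: its only indecomposable characters are the identity character $\mathbbm{1}_H$ and the regular character $\delta_H$. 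We will also use that $\mathsf{A}(\G)$ acts faithfully on $\G^{(0)}$ (by construction) and is simple (\autoref{minimal essentially principal then simple}).

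Now let $\chi$ be an indecomposable character of $\mathsf{A}(\G)$; we must show $\chi\in\{\mathbbm{1},\delta\}$. Restricting $\chi$ to $H$ and using that the characters of a countable group form a Choquet simplex whose extreme points are the indecomposable characters, and that those of $H$ are just $\mathbbm{1}_H$ and $\delta_H$, we get $\chi|_H=\lambda\,\mathbbm{1}_H+(1-\lambda)\,\delta_H$ for some $\lambda\in[0,1]$. By pure infiniteness one can build, as in the proof of \autoref{lma:DGvigor}, a nontrivial multisection supported on a compact open set avoiding any two prescribed points; such an element lies in both corresponding point stabilizers, so the scalar $\lambda$ is independent of the base point. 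Next observe that $N_\chi:=\{g\in\mathsf{A}(\G):\chi(g)=1\}$ is the kernel of the GNS representation $\pi_\chi$: since $\chi$ is also a trace, $\chi(g)=\chi(1)$ forces $\pi_\chi(g)\xi_\chi=\xi_\chi$ for the cyclic trace vector $\xi_\chi$, which is separating, whence $\pi_\chi(g)=1$; in particular $N_\chi$ is a normal subgroup of $\mathsf{A}(\G)$. If $\lambda=1$ then $H\subseteq N_\chi$, and since $H\neq\{1\}$, simplicity of $\mathsf{A}(\G)$ forces $N_\chi=\mathsf{A}(\G)$, i.e. $\chi=\mathbbm{1}$.

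It remains to treat the case $\lambda<1$, where we aim at $\chi=\delta$. Here I would run the doubling argument of \cite{DudMed_finite_2014}: a product of multisections with pairwise disjoint supports is again a multisection, and, because $\G$ is purely infinite and minimal, every multisection of $\mathsf{A}(\G)$ is conjugate to one supported away from $x_0$, so that $\chi$ takes the constant value $\lambda$ on all multisections; moreover one can produce an infinite family $m_0,m_1,m_2,\dots$ of pairwise disjointly supported conjugate multisections together with $s\in\mathsf{A}(\G)$ with $s m_i s^{-1}=m_{i+1}$. Evaluating $\chi$ along the products $m_0m_1\cdots m_{n-1}$ (again multisections, hence of value $\lambda$) and using the shift $s$ to identify $\chi$ on the subgroup they generate with a product character forces the relation $\lambda=\lambda^2$, so $\lambda=0$ in the present case. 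Then $\chi$ vanishes off the identity on every point stabilizer, hence on every multisection; propagating this to all of $\mathsf{A}(\G)$ — which is generated by multisections — via the Mackey/induction argument of \cite{DudMed_finite_2014} (together with the relative ICC property \autoref{cor:RelativeICC}) gives $\chi=\delta$. This last propagation is the main obstacle: character values are not multiplicative, so vanishing on a generating set of multisections does not by itself give vanishing everywhere, and the induction-theoretic input of \cite{DudMed_finite_2014} is genuinely needed; however, all of the groupoid-specific content has by that point been absorbed into the compressibility of the point stabilizers, which is precisely what \autoref{lemma:EquivalencesCompressible} supplies.
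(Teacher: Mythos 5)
Your overall route is the paper's: pass to the point stabiliser $H=\mathsf{A}(\G)_{x_0}$, use compressibility of $H\acts\G^{(0)}\setminus\{x_0\}$ to invoke Dudko--Medynets, and transfer back to $\mathsf{A}(\G)$ via its simplicity. But there are two genuine gaps. The first is in your opening step: you assert that compressibility of the stabiliser action alone ``yields that $H$ has no proper characters''. It cannot, because compressibility is a property of the action and is blind to finite quotients of the group: if $H$ acts compressibly on $X$, so does $H\times\mathbb{Z}/2\mathbb{Z}$ acting through the first factor, and the latter has the proper character $(h,\epsilon)\mapsto(-1)^{\epsilon}$. The relevant theorem of \cite{DudMed_finite_2014} is applied in the paper to a group that is compressible \emph{and simple}, and a substantial part of the paper's proof is devoted to verifying that $\mathsf{A}(\G)_{x_0}$ is in fact simple: one exhausts $\G^{(0)}\setminus\{x_0\}$ by an increasing sequence $(Z_n)_{n\in\mathbb{N}}$ of compact open sets, notes that each reduction $\G|_{Z_n}$ is again purely infinite and minimal so that $\mathsf{A}(\G|_{Z_n})$ is simple by \autoref{minimal essentially principal then simple}, and writes $\mathsf{A}(\G)_{x_0}=\bigcup_n\mathsf{A}(\G|_{Z_n})$ as an increasing union of simple groups. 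Your argument needs this input before the character analysis of \cite{DudMed_finite_2014} can be applied to $H$.

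The second gap is in your case $\lambda<1$, which as written is a plan rather than a proof. Both the ``doubling'' identity $\lambda=\lambda^{2}$ and, more seriously, the propagation from ``$\chi$ vanishes on every nontrivial element of every point stabiliser'' to ``$\chi$ is the regular character of $\mathsf{A}(\G)$'' are delegated to unspecified machinery of \cite{DudMed_finite_2014}, and you yourself flag the propagation as ``the main obstacle''. Since $\mathsf{A}(\G)$ contains many elements fixing no point (so lying in no stabiliser), and character values are not multiplicative over a generating set, this step carries real content and cannot be waved through by citing ``the Mackey/induction argument''. To your credit, the decomposition $\chi|_{H}=\lambda\,\mathbbm{1}_{H}+(1-\lambda)\,\delta_{H}$ and the $\lambda=1$ branch (normality of $N_\chi$ plus simplicity of $\mathsf{A}(\G)$) are correct and are in fact spelled out more carefully than in the paper's one-line transfer; but with the stabiliser's simplicity unproven and the $\lambda=0$ branch left open, the proposal does not yet constitute a complete proof.
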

\begin{proof} 
Note that $\mathsf{A}(\G)$ is vigorous by \autoref{purely infinite if and only if vigor}. 
Fix $x_0\in \G^{(0)}$.
It follows from \autoref{lma:VigCompress} that $\mathsf{A}(\G)_{x_0}\curvearrowright \G^{(0)}\setminus\{x_0\}$ is
compressible.

We claim that $\mathsf{A}(\G)_{x_0}$ is simple. Let $(Z_n)_{n \in \mathbb{N}}$ be an increasing sequence of compact and open subsets of $\G^{(0)} \setminus \Set{x_0}$ such that $\bigcup_{n \in \mathbb{N}} Z_n=\G^{(0)} \setminus \Set{x_0}$. Note that $\G|_{Z_n}$ is a purely infinite minimal Cantor groupoid for all 
$n\in\N$, and thus $\mathsf{A}(\G |_{Z_n})$ is simple by \autoref{minimal essentially principal then simple}.  Therefore $\mathsf{A}(\G)_{x_0}=\bigcup_{n \in \mathbb{N}}\mathsf{A}(\G |_{Z_n})$ is simple as well. 

Applying
\cite[Theorem 2.9]{DudMed_finite_2014}, we deduce that $\mathsf{A}(\G)_{x_0}$ has no proper characters. This implies that $\mathsf{A}(\G)$ has no proper characters: if $\chi$ was a character for $\mathsf{A}(\G)$, then it restricts to a character on $\mathsf{A}(\G)_{x_0}$, therefore $\mathsf{A}(\G)_{x_0} \subseteq \ker(\chi) $, and by simplicity we 
then get $\ker(\chi)=\mathsf{A}(\G)$, as desired.
\end{proof}

We can also obtain some information about the finite factor representations of topological 
full groups. Before we define these representations, 
we recall that for a subset $S \subseteq \mathsf{B}(H)$
of the bounded operators on a Hilbert space $H$, the commutant 
$S'$ is the weak-$\ast$ closed subalgebra of $\mathsf{B}(H)$ given by
\[S'=\{ a\in \mathsf{B}(H)\colon as=sa \mbox{ for all } s\in S\}.\]

\begin{df}
Let $\pi\colon \Gamma \rightarrow \mathsf{B}(H)$ be a unitary representation of a group $\Gamma$ on a Hilbert space $H$. We say that  
$\pi$ is a \emph{finite factor representation} if $\mathcal{M}_\pi=\pi(\Gamma)''$ is a factor, that is, 
$\mathcal{M}_\pi' \cap \mathcal{M}_\pi=\mathbb{C} \text{id}_{H}$.
\end{df}

It is well known that finite factor representations are in one-to-one correspondence with proper characters.
For a groupoid $\G$, we write
\[\pi^\G_{\mathrm{ab}}\colon \mathsf{F}(\G)\to \mathsf{F}(\G)_{\mathrm{ab}}\cong \mathsf{F}(\G)/\mathsf{D}(\G)\] 
for the canonical quotient map (the abelianisation).

\begin{prop}\label{finite factor}
Let $\G$ be a purely infinite, minimal Cantor groupoid, and let 
$\chi\colon \mathsf{F}(\G)\to \mathbb{T}$ be an indecomposable character.
Then $\chi$ is either regular, or has the form $\chi(g)=\rho\circ \pi^\G_{\mathrm{ab}}$ for some 
group homomorphism $\rho\colon \mathsf{F}(\G)_{\mathrm{ab}} \to \mathbb{T}$. In particular, 
the finite factor representations of $\mathsf{F}(\G)$ are all of the form
$g\mapsto \rho(\pi^\G_{\mathrm{ab}}(g))\id_H$, where $\rho$ is a character on $\mathsf{F}(\G)_{\mathrm{ab}}$
and $H$ is a Hilbert space. 
\end{prop}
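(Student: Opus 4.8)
The plan is to leverage \autoref{D has no proper characters} together with the standard dictionary between finite factor representations and indecomposable characters. First I would recall that a finite factor representation $\pi$ gives rise to a character on $\mathsf{F}(\G)$ via $\chi(g) = \tau(\pi(g))$, where $\tau$ is the unique tracial state on the finite factor $\mathcal{M}_\pi = \pi(\mathsf{F}(\G))''$, and that $\pi$ is quasi-equivalent to the GNS representation of $\chi$; conversely indecomposable characters yield factor representations. Thus it suffices to prove the statement about indecomposable characters $\chi\colon\mathsf{F}(\G)\to\mathbb{C}$.

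The main step is to analyze the restriction $\chi|_{\mathsf{A}(\G)}$. Since $\mathsf{A}(\G) = \mathsf{D}(\G)$ is normal in $\mathsf{F}(\G)$ (by \autoref{minimal essentially principal then simple}), the restriction $\chi|_{\mathsf{A}(\G)}$ is a character on $\mathsf{A}(\G)$, and by \autoref{D has no proper characters} it must be either the identity character or the regular character on $\mathsf{A}(\G)$. I would split into these two cases. If $\chi|_{\mathsf{A}(\G)}$ is the regular character, i.e.\ $\chi$ vanishes on $\mathsf{A}(\G)\setminus\{1\}$, then I claim $\chi$ is the regular character on all of $\mathsf{F}(\G)$: for any $g\in\mathsf{F}(\G)\setminus\{1\}$, \autoref{cor:RelativeICC} gives infinitely many distinct $\mathsf{D}(\G)$-conjugates $h_n^{-1}gh_n$ of $g$; by conjugation-invariance $\chi$ takes the same value $\chi(g)$ on all of them, and a standard argument using positive-definiteness (property (c): the matrix $(\chi(s_is_j^{-1}))$ for $s_i = h_i^{-1}gh_i$ has constant off-diagonal entry $\chi(g g^{-1})$... more precisely one considers the Gram matrix of the GNS vectors $\pi_\chi(s_i)\xi_\chi$, which are pairwise at constant inner product and hence must be equal, forcing $\chi(s_i s_j^{-1}) = 1$, whence $\chi(g)$ is forced by looking at $\|\pi_\chi(s_1)\xi - \pi_\chi(s_2)\xi\|^2 = 0$) shows $\chi(g)$ must vanish. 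This is the classical argument that an ICC-type condition relative to a subgroup kills characters.

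In the other case, $\chi|_{\mathsf{A}(\G)}$ is the identity character, so $\chi \equiv 1$ on $\mathsf{A}(\G) = \mathsf{D}(\G)$. Then I would show $\chi$ factors through the abelianization: since $\chi$ restricted to $\mathsf{D}(\G)$ is the identity character, the GNS representation $\pi_\chi$ has $\mathsf{D}(\G) \subseteq \ker\pi_\chi$ — indeed $\chi(d)=1$ for all $d\in\mathsf{D}(\G)$ means $\|\pi_\chi(d)\xi_\chi - \xi_\chi\|^2 = 2 - 2\re\chi(d) = 0$, so $\xi_\chi$ is $\mathsf{D}(\G)$-invariant, and since $\xi_\chi$ is cyclic and $\mathsf{D}(\G)$ is normal, the invariant vectors form an invariant subspace, forcing $\pi_\chi(\mathsf{D}(\G)) = 1$. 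Hence $\chi$ descends to an indecomposable character $\bar\chi$ on the abelian group $\mathsf{F}(\G)_{\mathrm{ab}}$; indecomposable characters of an abelian group are exactly its (unitary) characters, i.e.\ group homomorphisms $\rho\colon\mathsf{F}(\G)_{\mathrm{ab}}\to\mathbb{T}$, giving $\chi = \rho\circ\pi^\G_{\mathrm{ab}}$. Finally, translating back: when $\chi$ is regular the associated factor representation is the left regular representation; when $\chi = \rho\circ\pi^\G_{\mathrm{ab}}$, the GNS representation is one-dimensional, namely $g\mapsto\rho(\pi^\G_{\mathrm{ab}}(g))$, and the associated finite factor representations are exactly the amplifications $g\mapsto\rho(\pi^\G_{\mathrm{ab}}(g))\id_H$. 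The main obstacle I anticipate is being careful with the passage between characters and representations (ensuring the "regular character $\Rightarrow$ regular representation" and the quasi-equivalence statements are correctly invoked, ideally citing \cite{DudMed_finite_2014} or a standard reference), and making the positive-definiteness argument in the regular case fully rigorous rather than hand-waved.
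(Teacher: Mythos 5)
Your overall strategy---restrict $\chi$ to $\mathsf{A}(\G)=\mathsf{D}(\G)$, control the restriction via \autoref{D has no proper characters}, and then treat the ``regular'' and ``identity'' cases separately---amounts to reproving \cite[Theorem~2.11]{DudMed_finite_2014} from scratch in this special case, whereas the paper simply verifies the hypotheses of that theorem (normality of $\mathsf{A}(\G)$, its ICC property, the absence of proper characters, and the relative ICC condition witnessed by the identity $(h_n g^{-1}h_n^{-1})(h_m g h_m^{-1})=[h_n,g^{-1}][g^{-1},h_m]\in\mathsf{D}(\G)$) and cites it. Your ``identity'' case is correct, and your ``regular'' case is salvageable: the key point, which you gesture at but garble, is exactly the commutator identity above, which shows that for $i\neq j$ the element $s_i^{-1}s_j$ lies in $\mathsf{A}(\G)\setminus\{1\}$; hence the GNS vectors $\pi_\chi(s_i)\xi_\chi$ are orthonormal and Bessel's inequality forces $\chi(g)=0$. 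Your parenthetical claiming the off-diagonal entries equal $\chi(gg^{-1})$ is wrong --- $s_is_j^{-1}$ is not conjugate to $gg^{-1}$, and conjugation-invariance does not permit that substitution --- and the ``constant inner product forces equality'' reasoning does not apply here.

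The genuine gap is the opening step of your case analysis: you assert that $\chi|_{\mathsf{A}(\G)}$ ``must be either the identity character or the regular character'' by \autoref{D has no proper characters}. That corollary classifies only the \emph{indecomposable} characters of $\mathsf{A}(\G)$, and the restriction of an indecomposable character of $\mathsf{F}(\G)$ to a normal subgroup need not be indecomposable: a priori $\chi|_{\mathsf{A}(\G)}$ could be a proper convex combination $\lambda\cdot 1+(1-\lambda)\delta_e$ with $0<\lambda<1$, which neither of your cases covers. Excluding this intermediate case requires an additional argument --- for instance disintegrating the restriction over the centre of $\pi_\chi(\mathsf{A}(\G))''$ and using factoriality of $\pi_\chi(\mathsf{F}(\G))''$ together with the fact that both extreme characters of $\mathsf{A}(\G)$ are $\mathsf{F}(\G)$-conjugation-invariant, or else exploiting the relative ICC condition directly --- and this is precisely part of the work that \cite[Theorem~2.11]{DudMed_finite_2014} performs. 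As written, the proposal is incomplete without closing this step.
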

\begin{proof}
We verify the assumptions of \cite[Theorem 2.11]{DudMed_finite_2014} for $G=\mathsf{F}(\G)$
and $R=\mathsf{A}(\G)$. Note that $\mathsf{A}(\G)$ is ICC by the 
combination of \autoref{purely infinite if and only if vigor} and part~(2) of 
\autoref{lemma:NoInvMeas}; has no proper
characters by \autoref{D has no proper characters}; and is normal in $\mathsf{F}(\G)$.

Let $g \in \mathsf{F}(\G) \setminus \{1\}$. Find a compact and open subset $Y_1\subseteq\G^{(0)}$ such that  $g(Y_1) \cap Y_1= \emptyset$. 
Find an infinite family $\{Y_2^{(n)}\colon n\in\N\}$ of nonempty pairwise disjoint 
compact and open subsets of $Y_1$. 
Since the canonical action of $\mathsf{A}(\G)$ on $\G^{(0)}$ is vigorous by 
\autoref{purely infinite if and only if vigor}, for every $n\in\N$ there exists $h_n\in \mathsf{A}(\G)$ with support contained in $\G^{(0)}\setminus Y_1$, 
such that $h_n(g(Y_1)) \subseteq Y_2^{(n)}$. 
For $n,m\in\N$ distinct, it follows that 
$h_n^{-1}gh_n$ is different from $h_m^{-1}gh_m$.
Moreover, we have
\[ (h_n g^{-1} h_n^{-1}) (h_m g h_m^{-1})=(h_n g^{-1} h_n^{-1}g) (g^{-1}h_m g h_m^{-1})=\big[h_n, g^{-1}\big]\big[g^{-1},h_m\big], \]
which therefore belongs to $\mathsf{D}(\G)=\mathsf{A}(\G)$.
The result now follows from \cite[Theorem 2.11]{DudMed_finite_2014}.
\end{proof}

\subsection{Proof of Main Theorems}
\label{s3last}
In this subsection, we aim to complete the proofs of 
Theorems~\ref{thmintro:equiv} and \ref{thmintro:Realization}.
We begin with Theorem~\ref{thmintro:equiv}, whose statement we reproduce: 

\begin{thm}    \label{thm:equiv}
    Let $\G$ be an essentially principal, ample groupoid. Then the following are equivalent:
    \begin{enumerate}
        \item $\G$ is purely infinite and minimal.
        \item $\mathsf{A}(\G)\leq \mathrm{Homeo}(\G^{(0)})$ is vigorous. 
        \item For every $x_0 \in \G^{(0)}$, the subgroup \[ \mathsf{A}(\G)_{x_0}=\{ g \in \mathsf{A}(\G) \colon \text{there is a neighbourhood } Y \text{ of } x_0 \text{ such that } g|_Y=\id_Y \}\]
        acts compressibly on $\G^{(0)}\setminus \{x_0\}$.  \end{enumerate} 
If any of the above conditions are satisfied, then for every compact open subset $X \subsetneq \G^{(0)}$, there exists an embedding \[\phi_X\colon V \to \mathsf{A}(\G)\] such that $X \subseteq \supp (\phi_X(V))$.
\end{thm}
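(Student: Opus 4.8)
The plan is to assemble \autoref{thm:equiv} from the three equivalences already established in this section, one accounting for each of conditions (2), (3) and (4): the equivalence $(1)\Leftrightarrow(2)$ is \autoref{purely infinite if and only if vigor}, the equivalence $(1)\Leftrightarrow(3)$ is \autoref{lemma:EquivalencesCompressible}, and the equivalence $(1)\Leftrightarrow(4)$ is \autoref{thm:Vembeds}; chaining these through condition (1) yields the equivalence of all four. The only point needing attention is that those three results are stated for \emph{Cantor} groupoids, whereas here $\G$ is merely ample. Since \autoref{df:Vigorous} is itself only stated for the Cantor space, reading condition (2) literally already forces $\G^{(0)}$ to be a Cantor space, in which case there is nothing to do; nonetheless I would record the (mild) reduction to the Cantor case needed for general ample $\G$.

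For that reduction I would first dispose of the degenerate case $|\G^{(0)}|\le 1$ and then observe that, under any of (1)--(4), the unit space has no isolated points. Under (1) this is immediate: an isolated point $x_0$ would make $\{x_0\}$ compact open, and by \autoref{df:PI} it would then have to contain two disjoint nonempty subsets, which is absurd. Each of (2), (3) and (4) implies (1) via the \emph{backward} implications in the proofs of \autoref{purely infinite if and only if vigor}, \autoref{lemma:EquivalencesCompressible} and \autoref{thm:Vembeds}, whose arguments use only the group elements (or embedded copy of $V$) supplied by the hypothesis together with the ample structure, so no isolated points can occur there either. Consequently every nonempty compact open $K\subseteq\G^{(0)}$ is a Cantor space, and the reduction $\G|_K$ is an essentially principal, purely infinite, minimal, \'etale Cantor groupoid -- both pure infiniteness and minimality pass to reductions to compact open subsets. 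Fixing an exhaustion $\G^{(0)}=\bigcup_n K_n$ by compact open sets, one has $\mathsf{F}(\G)=\varinjlim_n \mathsf{F}(\G|_{K_n})$ and $\mathsf{A}(\G)=\varinjlim_n \mathsf{A}(\G|_{K_n})$ as in \autoref{groupoid tfg}. Since the finitely many compact open subsets occurring in any single instance of \autoref{df:Vigorous} or \autoref{df:ComprAction} all sit inside some $K_n$, one can verify (2), (3) and (4) for $\G$ by applying the Cantor versions of the three results to a suitable $\G|_{K_n}$ and transporting the output back along the canonical inclusion $\mathsf{A}(\G|_{K_n})\hookrightarrow\mathsf{A}(\G)$ (extension by the identity outside $K_n$ preserves the relevant support and equivariance constraints).

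The step I expect to cost the most care is this last piece of bookkeeping, and within it the compressibility case (3): there $x_0$ need not lie in any $K_n$, and $\G^{(0)}\setminus\{x_0\}$ is typically non-compact, so one must be a little careful about which subbase of $\G^{(0)}\setminus\{x_0\}$ to use and check that it localises compatibly with the inductive limit. The subbase built in \autoref{lma:VigCompress} -- a basis of compact open sets closed under finite unions together with a decreasing neighbourhood basis of $x_0$ -- is exactly what makes this work, once vigor of $\mathsf{A}(\G)$ has been established. When $\G^{(0)}$ is already a Cantor space the second and third paragraphs collapse entirely and \autoref{thm:equiv} is literally the conjunction of \autoref{thm:Vembeds}, \autoref{purely infinite if and only if vigor} and \autoref{lemma:EquivalencesCompressible}.
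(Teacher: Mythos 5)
Your proposal is correct and follows essentially the same route as the paper: the Cantor case is exactly the conjunction of \autoref{purely infinite if and only if vigor}, \autoref{lemma:EquivalencesCompressible} and \autoref{thm:Vembeds}, and the general ample case is handled by localising to compact open subsets, with the only delicate point being compressibility in condition (3), which the paper likewise treats by covering $\G^{(0)}\setminus\{x_0\}$ with compact open sets. One small correction: since $\G^{(0)}$ need not be $\sigma$-compact, you should work with the directed family of \emph{all} compact open subsets (as in \autoref{groupoid tfg}) rather than a countable exhaustion $\bigcup_n K_n$; this changes nothing in the argument, as each instance of \autoref{df:Vigorous} or \autoref{df:ComprAction} involves only finitely many compact open sets.
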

\begin{proof}
What we have shown in the previous sections proves the result assuming that $\G^{(0)}$ is a Cantor set. 
Indeed, the equivalence between (1) and (2) is \autoref{purely infinite if and only if vigor}; the equivalence between (1) 
and (3) is \autoref{lemma:EquivalencesCompressible}; and the 
fact that
(1) implies the last part of the statement is \autoref{thm:Vembeds}.

We now explain how to obtain the result in full generality from
the Cantor case. Note that the equivalence between (1) and (2) is
automatic,
since being purely infinite and minimal is invariant under restrictions to compact subsets $X \subseteq \G^{(0)}$, and the property in (2) is local. For the same reason, (1) implies the last part of the statement.

It thus remains to show
that (1) is equivalent (3). Assuming (1), let $x_0 \in \G^{(0)}$ be given, and find a compact open cover 
$(X_i)_{i \in I}$ of $\G^{(0)} \setminus \{ x_0 \}$. 
For all $i\in I$, it follows from the compact case that the action \[\mathsf{A}(\G)_{X_i \cup \{x_0\}} \acts X_i \cup \{x_0\} \setminus \{x_0 \}\] is compressible; compressibility of the overall action then follows easily. Conversely, assuming (3), we will show that (1) holds.
Let $X,Y\subseteq \mathcal{G}^{(0)}$ be nonempty 
compact open subsets, and assume without loss of generality that $X\cup Y\neq \mathcal{G}^{(0)}$. Let 
$x_0\notin X\cup Y$, and let $\mathcal{U}$ be a compressible cover associated with the action of $\mathsf{F}(\mathcal{G})\curvearrowright \mathcal{G}^{(0)} \setminus\{ x_0\}$. Let $U_1, U_2 \in \mathcal{U}$ with $X\subseteq U_1$ and $Y\subseteq U_2$ be given. (Such $U_1$ exists because of condition (4) of compressibility.) Using condition (2) of compressibility, find $g \in \mathsf{F}(\G)$ such that $ g(U_1) \subset U_2$. Then $g(X)\subseteq q(U_1)\subseteq U_2 \subseteq Y$, and the restriction $g|_X$ gives a compact open bisection $B$ with $BX \subseteq Y$. 
\end{proof}

We stress the fact that for the groupoids covered by the above theorem, the alternating and derived subgroups always agree by 
\autoref{minimal essentially principal then simple}. 
In particular, the alternating group is always perfect (meaning that it equals its commutator subgroup).

\begin{rem} For Hausdorff groupoids, essential principality is equivalent to effectiveness. On the other hand, effectiveness is the right notion to consider
in the non-Hausdorff setting. We expect the above theorem to hold in this case; however, this involves obtaining (mostly routine) generalisations of many results from the Hausdorff to the non-Hausdorff case. Since for most applications the Hausdorff case is sufficient, we focus on essentially principal groupoids. 
\end{rem}

We now aim to complete our proof of Theorem \ref{thmintro:Realization}. For this, we need to use that the canonical action
$\mathsf{F}(\G) \acts \G^{(0)}$ is rigid in the sense 
of Rubin \cite{Rub_reconstruction_1989}, so we define this notion first. 

\begin{df}
Let $\Gamma$ be a discrete group and let $C$ be a locally 
compact Hausdorff space without isolated points.
We say that an action $\Gamma\acts C$ is a \emph{Rubin action}
if for every open set $U\subseteq C$ and every $x\in U$,
the closure of 
\[\{g\cdot x\colon g\in \Gamma \mbox{ and } \mathrm{supp}(g)\subseteq U\}\]
contains an open neighbourhood of $x$.
\end{df}

Rubin's reconstruction Theorem from \cite{Rub_reconstruction_1989} is extremely useful when working with Rubin actions.

\begin{thm}[Rubin] \label{rubin}
Let $\Gamma_1\acts X_1$ and $\Gamma_2 \acts X_2$ be Rubin actions of discrete groups on compact Hausdorff spaces $X_1$ and $X_2$ without isolated points. If $\rho\colon \Gamma_1 \rightarrow \Gamma_2$ is a group isomorphism, then there exists a 
homeomorphism $\Phi\colon X_1 \rightarrow X_2$ satisfying
$\Phi(g \cdot x)=\rho(g)\cdot \Phi(x)$ for all $x\in X_1$ and all $g\in \Gamma_1$.  
\end{thm}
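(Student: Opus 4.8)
This is Rubin's reconstruction theorem; the reference is \cite{Rub_reconstruction_1989}, and here we only indicate the plan. The idea is to recover each space $X_i$, together with its $\Gamma_i$-action, from the abstract group $\Gamma_i$ alone, in a way that is functorial under group isomorphisms; the homeomorphism $\Phi$ is then produced by applying this recovery procedure to $\rho$. The ``Rubin action'' hypothesis (local density) is exactly what makes the relevant topological features of $X_i$ visible through the group multiplication, and the absence of isolated points guarantees that this data is rich enough for the reconstruction to be faithful.

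First I would encode the topology of $X_i$ inside $\Gamma_i$ via rigid stabilisers: for an open set $U\subseteq X_i$ put $\Gamma_i[U]=\{g\in\Gamma_i\colon\supp(g)\subseteq U\}$. Local density forces $\Gamma_i[U]\neq\{1\}$ for every nonempty open $U$, and in fact forces the $\Gamma_i[U]$-orbits in $U$ to be somewhere dense; from this one extracts the two crucial facts that, for regular open sets $U,V$, one has $U\subseteq V$ if and only if $\Gamma_i[U]\leq\Gamma_i[V]$, and $U\cap V=\emptyset$ if and only if $[\Gamma_i[U],\Gamma_i[V]]=\{1\}$. One then shows that the subgroups of the form $\Gamma_i[U]$, with $U$ regular open, are precisely the subgroups of $\Gamma_i$ satisfying a purely group-theoretic condition (Rubin's notion of a \emph{regular} subgroup). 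Hence $U\mapsto\Gamma_i[U]$ is an isomorphism from the Boolean algebra $\mathrm{RO}(X_i)$ of regular open subsets of $X_i$ onto an abstractly-defined lattice of subgroups of $\Gamma_i$, a lattice which moreover carries the $\Gamma_i$-action by conjugation, matching the given action on regular open sets.

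Since $\rho$ is a group isomorphism, it carries the distinguished lattice of subgroups of $\Gamma_1$ onto that of $\Gamma_2$, preserving inclusions, disjointness and the conjugation actions; it therefore induces a Boolean-algebra isomorphism $\theta\colon\mathrm{RO}(X_1)\to\mathrm{RO}(X_2)$ intertwining the $\Gamma_1$- and $\Gamma_2$-actions through $\rho$. To pass from $\theta$ to a homeomorphism of spaces I would then recover the points. Each $x\in X_i$ gives its neighbourhood filter $\mathcal{N}_x=\{U\in\mathrm{RO}(X_i)\colon x\in U\}$, and since $X_i$ is compact Hausdorff (hence regular) one has $\{x\}=\bigcap_{U\in\mathcal{N}_x}\overline{U}$; the filters arising in this way can be singled out among all filters on $\mathrm{RO}(X_i)$ using compactness together with the compatibility of the data with the group action. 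Transporting neighbourhood filters along $\theta$ and intersecting closures then defines a bijection $\Phi\colon X_1\to X_2$; bicontinuity of $\Phi$ is immediate from $\theta$ being a Boolean isomorphism, and $\Phi(\gamma\cdot x)=\rho(\gamma)\cdot\Phi(x)$ follows because $\theta$ intertwines the two actions.

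The hard part is the group-theoretic characterisation underlying the second step: one must recover from the multiplication table alone both the membership relation ``$\supp(g)\subseteq U$'' and the disjointness of supports, and the naive hope of reading disjointness off directly from commutation of individual elements fails in general (e.g. $g$ commutes with $g^2$), so it must be captured instead at the level of rigid stabilisers. Proving that $[\Gamma_i[U],\Gamma_i[V]]=\{1\}$ forces $U\cap V=\emptyset$, and abstractly recognising which subgroups are rigid stabilisers, is carried out by a delicate double-commutant type analysis that uses local density to manufacture, inside any prescribed open set, abundantly many group elements with tightly controlled supports; this is precisely where the Rubin-action and no-isolated-points hypotheses are indispensable.
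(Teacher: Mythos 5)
The paper gives no proof of this statement: it is an external theorem quoted directly from Rubin's paper \cite{Rub_reconstruction_1989}, so there is no internal argument to compare against. Your outline --- deferring to that same reference while sketching the standard reconstruction via rigid stabilisers, the group-theoretic recovery of the regular-open algebra, and the recovery of points from neighbourhood filters --- is a faithful summary of Rubin's actual argument and is consistent with how the paper uses the result.
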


We will the notion of a \emph{locally
closed} subgroup of homeomorphisms, due to Nyland-Ortega; see \cite[Definition~4.4]{NylOrt_topological_2019}. 

\begin{df}
Let $X$ be a locally compact Hausdorff space and let $\Gamma  \leq \text{Homeo}(X)$ let be a subgroup. We say that a homeomorphism $\phi \in \text{Homeo}(X)$ is \textit{locally in} $\Gamma $ if for all $x \in X$, there exist a neighbourhood $U$ of $x$ and an element $g \in \Gamma $ such that $\phi|_U=g|_U$. We say that $\Gamma $ is \textit{locally closed} if any homeomorphism of $X$ which is locally in $\Gamma $ is automatically in $\Gamma $.
\end{df}

In other words, $\Gamma $ is locally closed if homeomorphisms from it 
can be ``glued" 
to get another homeomorphism of $\Gamma $. It is not difficult to see that topological full groups satisfy this property, 
essentially by construction; see \cite[Theorem~4.5]{NylOrt_topological_2019}. 

\begin{prop}\label{prop:FGgluing}
Let $\G$ be an ample essentially principal groupoid. Then $\mathsf{F}(\G)$ is locally closed.
\end{prop}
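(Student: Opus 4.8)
The plan is to show directly that $\mathsf{F}(\G)$ is closed under the gluing operation defining local closedness. Suppose $\phi \in \mathrm{Homeo}(\G^{(0)})$ is locally in $\mathsf{F}(\G)$; I want to produce a single compact open bisection witnessing membership at each point, so that the second (equivalent) description of $\mathsf{F}(\G)$ in \autoref{groupoid tfg} applies. First I would fix $x \in \G^{(0)}$ and use that $\phi$ is locally in $\mathsf{F}(\G)$ to find a neighbourhood $U_x$ of $x$ and $g_x \in \mathsf{F}(\G)$ with $\phi|_{U_x} = g_x|_{U_x}$. Since $g_x \in \mathsf{F}(\G)$, by the local-bisection description of the topological full group there is a compact open bisection $B_x \in \mathcal{B}(\G)$ with $x \in s(B_x)$ and $g_x|_{s(B_x)} = \alpha_{B_x}$; shrinking, I may assume $s(B_x) \subseteq U_x$ is compact open, so that $\phi|_{s(B_x)} = \alpha_{B_x}$. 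This already shows $\phi$ has the local-bisection property at every point.

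For the locally compact (non-Cantor) case one must still check that $\phi$ genuinely lies in the inductive limit $\mathsf{F}(\G) = \varinjlim_K \alpha^K(\mathsf{F}(\G)_K)$ rather than just having local bisection charts. The key step here is a compactness/partition argument: given a compact open $K \subseteq \G^{(0)}$, the sets $s(B_x)$ for $x \in K$ cover $K$, so finitely many of them do, and by the usual disjointification trick (replacing $s(B_{x_j})$ by $s(B_{x_j}) \setminus \bigcup_{i<j} s(B_{x_i})$ and correspondingly restricting the bisections $B_{x_j}$, which stays within $\mathcal{B}(\G)$ since $\mathcal{B}(\G)$ is an inverse monoid closed under restriction to compact open subsets of the source) I obtain a finite disjoint union $B_K = \bigsqcup_j B_{x_j}'$ with $s(B_K) = K$ and $\alpha_{B_K} = \phi|_K$. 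Thus $\phi(K) = r(B_K)$ is compact open, $\phi^{-1}$ is locally in $\mathsf{F}(\G)$ by the same reasoning, and taking $L = K \cup \phi(K)$ compact open one gets a full bisection over $L$ restricting to $\phi$ on $K$; letting $K$ exhaust $\G^{(0)}$ shows $\phi \in \mathsf{F}(\G)$. When $\G^{(0)}$ is compact this simplifies: one directly builds a single full bisection $B$ with $s(B) = r(B) = \G^{(0)}$ and $\alpha_B = \phi$, hence $\phi \in \mathsf{F}(\G)_{\G^{(0)}} = \mathsf{F}(\G)$.

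I do not expect any serious obstacle here: the statement is essentially built into the definition of $\mathsf{F}(\G)$, and the only mild care needed is in the bookkeeping of the disjointification of bisections and in confirming that $\phi$ being locally in $\mathsf{F}(\G)$ forces $\phi(K)$ to be compact open for each compact open $K$ (so that the resulting bisection is honestly compact). This last point is where effectiveness/ampleness is quietly used, via the fact that $\mathcal{B}(\G)$ forms a basis of compact open bisections. Essential principality plays no role in this particular proposition beyond what is already subsumed in the standing hypotheses.
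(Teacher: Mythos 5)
Your argument in the compact case is correct and is the standard one: cover $\G^{(0)}$ by the sources $s(B_x)$ of finitely many compact open bisections on which $\phi$ agrees with the corresponding $\alpha_{B_x}$, disjointify the sources, and take the union to get a single full compact open bisection $B$ with $\alpha_B=\phi$ (injectivity of $r|_B$ coming from injectivity of $\phi$, and $r(B)=\phi(\G^{(0)})=\G^{(0)}$ from surjectivity). The paper itself does not write this out but simply cites Nyland--Ortega, so for the Cantor case --- which is the only case the paper actually uses, in the proof of the realization theorem --- your proof is complete and matches the intended argument.

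There is, however, a genuine gap in your treatment of the non-compact case, and it is not merely bookkeeping. An element of $\mathsf{F}(\G)=\varinjlim_K \alpha^K(\mathsf{F}(\G)_K)$ is, by \autoref{groupoid tfg}, a homeomorphism that equals the identity outside a \emph{single} compact open set $K$. A homeomorphism that is locally in $\mathsf{F}(\G)$ need not be compactly supported: take an infinite family of pairwise disjoint compact open sets $K_n$ and a homeomorphism acting as a nontrivial element of $\mathsf{F}(\G)_{K_n}$ on each $K_n$; this is locally in $\mathsf{F}(\G)$ but lies in no $\alpha^L(\mathsf{F}(\G)_L)$. So ``letting $K$ exhaust $\G^{(0)}$'' produces elements of $\mathsf{F}(\G)$ agreeing with $\phi$ on larger and larger compact sets, but never $\phi$ itself, and with the literal definition of locally closed (quantifying over all of $\mathrm{Homeo}(\G^{(0)})$) the statement is actually false for non-compact unit spaces. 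The correct formulation --- which the paper itself flags at the end of Section~3 --- is local closedness inside $\mathrm{Homeo}_{\mathrm{c}}(\G^{(0)})$. With that reading the repair is easy and makes your inductive-limit discussion unnecessary: if $\mathrm{supp}(\phi)$ is compact, choose a compact open $K$ containing it (possible by ampleness); since $\phi$ is the identity off $K$ and is bijective, $\phi(K)=K$, and your compact-case gluing applied over $K$, extended by the trivial bisection, exhibits $\phi$ as an element of $\alpha^K(\mathsf{F}(\G)_K)$. Also note that your attempted extension ``to a full bisection over $L=K\cup\phi(K)$ restricting to $\phi$ on $K$'' is not available in general: such an element of $\mathsf{F}(\G)_L$ must permute $L$, whereas $\phi$ need not, so it cannot agree with $\phi$ on all of $L$ unless $\phi$ is already the identity off $K$. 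Finally, you are right that essential principality is not needed for this direction; its role is only to make the assignment $B\mapsto\alpha_B$ faithful.
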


Next, we completely characterise
the subgroups of $\mathrm{Homeo}(\mathcal{C})$ that arise as
full groups of minimal, purely infinite groupoids: they are precisely the vigorous groups which are also locally closed. We can also abstractly characterise 
the derived subgroups of such groupoids: they are the 
simple, vigorous groups. 

The following 
is an existence and uniqueness theorem. While the existence part is
completely new, the uniqueness part 
can be deduced from Matui's spatial isomorphism theorem. We nevertheless give a shorter proof using Rubin's Theorem
and \autoref{purely infinite if and only if vigor}, since it illustrates the scope
of our results. 
It should be
pointed out that isomorphism arguments of this nature have also been used in \cite{NylOrt_matuis_2021}.

\begin{thm}\label{thm:Realization}
Let $\mathcal{C}$ denote the Cantor space, and let 
$F\leq \mathrm{Homeo}(\mathcal{C})$ be a subgroup. 
Then the following are equivalent:
\begin{enumerate} 
\item[(F.1)] There exists a minimal, purely infinite, essentially principal, Cantor \'etale groupoid $\G_F$ with $\mathsf{F}(\G_F)\cong F$, and
\item[(F.2)] $F$ is vigorous and locally closed.
\end{enumerate}
For a subgroup $A\leq \mathrm{Homeo}(\mathcal{C})$,
the following are equivalent: 
\begin{enumerate} 
\item[(A.1)] There exists a minimal, purely infinite, essentially principal, Cantor \'etale groupoid $\G_A$ with $\mathsf{A}(\G_A)\cong A$, and
\item[(A.2)] $A$ is vigorous and simple.
\end{enumerate}

Moreover, the groupoids $\G_F$ and $\G_A$ as in (F.1) and (A.1) 
above are unique
up to groupoid isomorphism.
\end{thm}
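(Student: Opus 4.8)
The plan is to handle the two existence-and-uniqueness statements in parallel, since they share the same backbone: Rubin's reconstruction theorem (\autoref{rubin}), the fact that $\mathsf{F}(\G)$ and $\mathsf{A}(\G)$ act as Rubin actions on $\G^{(0)}$ when $\G$ is purely infinite and minimal, and the characterisation of pure infiniteness via vigor (\autoref{purely infinite if and only if vigor}).

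\textbf{Existence.} First I would prove (F.2)$\Rightarrow$(F.1). Given a vigorous, locally closed $F\leq \mathrm{Homeo}(\mathcal{C})$, form the groupoid of germs $\G_F:=\mathrm{Germ}(F\acts\mathcal{C})$. Since $\mathcal{C}$ is Cantor and $F$ acts by homeomorphisms, $\G_F$ is an ample \'etale Cantor groupoid; I would check it is Hausdorff using that vigor forces the action to be "rich" enough (in fact one uses that $F$ is locally closed, or falls back to the groupoid of germs being effective by construction). Essential principality/effectiveness of a groupoid of germs is automatic. Vigor of $F$ gives minimality: if $\mathcal{C}x$ were not dense, one could separate a compact open set from the orbit, contradicting the ability to move any nonempty compact open inside any other. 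Then \autoref{purely infinite if and only if vigor} — more precisely its proof, which only needs vigor of a subgroup of $\mathsf{F}(\G_F)$ — shows $\G_F$ is purely infinite (here one checks $F\leq \mathsf{F}(\G_F)$, using local closedness to get the reverse inclusion $\mathsf{F}(\G_F)\subseteq F$ as in \autoref{prop:FGgluing}, so in fact $\mathsf{F}(\G_F)=F$). For (A.2)$\Rightarrow$(A.1): given vigorous simple $A$, I would again form $\G_A=\mathrm{Germ}(A\acts\mathcal{C})$; vigor gives minimality and pure infiniteness as above, and then \autoref{minimal essentially principal then simple} gives $\mathsf{A}(\G_A)=\mathsf{D}(\G_A)$ simple. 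The remaining point is that $A=\mathsf{A}(\G_A)$: the inclusion $A\subseteq\mathsf{F}(\G_A)$ is clear, $\mathsf{A}(\G_A)$ is vigorous hence nontrivial, and $A\cap\mathsf{A}(\G_A)$ is a normal subgroup of the simple group $A$ — but one must rule out $A\cap\mathsf{A}(\G_A)=\{1\}$, e.g. by exhibiting an order-three multisection element of $\mathsf{A}(\G_A)$ lying in $A$ using vigor of $A$ directly (mimicking the construction in the proof of \autoref{purely infinite if and only if vigor}). The converse implications (F.1)$\Rightarrow$(F.2) and (A.1)$\Rightarrow$(A.2) are immediate: $\mathsf{F}(\G)$ is locally closed by \autoref{prop:FGgluing}, $\mathsf{A}(\G)$ and $\mathsf{F}(\G)$ are vigorous by \autoref{purely infinite if and only if vigor}, and $\mathsf{A}(\G)=\mathsf{D}(\G)$ is simple by \autoref{minimal essentially principal then simple}.

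\textbf{Uniqueness.} Suppose $\G$ and $\G'$ are both minimal, purely infinite, essentially principal Cantor \'etale groupoids with $\mathsf{F}(\G)\cong\mathsf{F}(\G')$ (resp. $\mathsf{A}(\G)\cong\mathsf{A}(\G')$). The key lemma is that the canonical action $\mathsf{F}(\G)\acts\G^{(0)}$ is a Rubin action: given an open $U\ni x$, vigor of $\mathsf{A}(\G)\leq\mathsf{F}(\G)$ (\autoref{purely infinite if and only if vigor}) lets me produce, for any small compact open $Y\ni x$ inside $U$, elements supported in $U$ moving a fixed compact open set onto pieces filling up a neighbourhood of $x$ — so the orbit of $x$ under the support-in-$U$ subgroup is dense in a neighbourhood of $x$; the same argument applies to $\mathsf{A}(\G)$ itself. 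By Rubin's theorem, the group isomorphism is implemented by a homeomorphism $\Phi\colon\G^{(0)}\to(\G')^{(0)}$ intertwining the actions. It remains to upgrade $\Phi$ to a groupoid isomorphism $\G\cong\G'$: since $\G$ is effective and Cantor, $\G$ is reconstructed from the action $\mathsf{F}(\G)\acts\G^{(0)}$ as its groupoid of germs (this is where effectiveness/essential principality is used — compact open bisections in $\G$ correspond exactly to the "piecewise" local homeomorphisms coming from $\mathsf{F}(\G)$), and $\Phi$ being action-intertwining transports germs to germs, hence $\G\cong\mathrm{Germ}(\mathsf{F}(\G)\acts\G^{(0)})\cong\mathrm{Germ}(\mathsf{F}(\G')\acts(\G')^{(0)})\cong\G'$.

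\textbf{Main obstacle.} I expect the genuinely delicate point to be the identification $\G\cong\mathrm{Germ}(\mathsf{F}(\G)\acts\G^{(0)})$ and the corresponding statement for $\mathsf{A}(\G)$ in place of $\mathsf{F}(\G)$ — i.e. showing that the abstract group-plus-space data recovers all of $\G$ as a topological groupoid, not just its orbit equivalence relation. For the $\mathsf{A}(\G)$ version one additionally needs that $\mathsf{A}(\G)$ already "sees" every compact open bisection of $\G$ up to germs, which requires the pure infiniteness hypothesis to manufacture enough multisections; this is the part where one leans hardest on \autoref{lma:PIcompact and open} and \autoref{minimal essentially principal then simple}. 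The Hausdorff/effectiveness bookkeeping for the groupoid of germs in the existence direction is the secondary technical nuisance.
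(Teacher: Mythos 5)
Your overall architecture (groupoid of germs for existence, Rubin's theorem plus reconstruction as a germ groupoid for uniqueness, with \autoref{purely infinite if and only if vigor} feeding in throughout) is exactly the paper's, and your treatment of (F.1)$\Leftrightarrow$(F.2) and of the uniqueness statement is essentially identical to what the paper does. However, there is a genuine gap in your argument for (A.2)$\Rightarrow$(A.1), at the step where you claim $A=\mathsf{A}(\G_A)$. Your normality-plus-simplicity argument is aimed at the wrong inclusion: since $\mathsf{A}(\G_A)$ is normal in $\mathsf{F}(\G_A)$, the intersection $A\cap\mathsf{A}(\G_A)$ is normal in $A$, and simplicity of $A$ then gives (once the intersection is nontrivial) only $A\subseteq\mathsf{A}(\G_A)$. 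In fact this inclusion is free of charge without any intersection argument: $A$ is simple and nonabelian, hence perfect, so $A=[A,A]\subseteq[\mathsf{F}(\G_A),\mathsf{F}(\G_A)]=\mathsf{D}(\G_A)=\mathsf{A}(\G_A)$. What you actually need, and do not address, is the reverse inclusion $\mathsf{A}(\G_A)\subseteq A$. Note that $\mathsf{A}(\G_A)=\mathsf{D}(\G_A)$ is the derived subgroup of the full group of the germ groupoid, i.e.\ of the \emph{local closure} $F_A$ of $A$ (elements of $\mathsf{F}(\G_A)$ are pieced together from restrictions of elements of $A$ to compact open sets, so they need not lie in $A$). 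The order-three multisections generating $\mathsf{A}(\G_A)$ are built by exactly this kind of cutting and pasting, so there is no reason for them to belong to $A$; your suggestion to ``exhibit an order-three multisection element of $\mathsf{A}(\G_A)$ lying in $A$ using vigor of $A$ directly'' would at best show the intersection is nontrivial, which, as noted, only recovers the easy inclusion.

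The paper closes this gap with an external input: it first passes to the local closure $F_A$, applies the (F)-part of the theorem to get a groupoid $\G_A$ with $\mathsf{F}(\G_A)\cong F_A$, and then invokes the equivalence of conditions (1) and (6) in Theorem~1.11 of Bleak--Elliott--Hyde \cite{BleEllHyd_sufficient_2020}, which says precisely that a vigorous simple group equals the commutator subgroup of its local closure; combined with \autoref{minimal essentially principal then simple} this yields $A=[F_A,F_A]=\mathsf{D}(\G_A)=\mathsf{A}(\G_A)$. Without this (or an equivalent argument showing that every commutator of elements of $F_A$ already lies in $A$), your proof of (A.2)$\Rightarrow$(A.1) does not go through. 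The remainder of your proposal is sound and matches the paper's route.
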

\begin{proof}
(F.1) implies (F.2). Let $\G$ be a minimal, purely infinite, essentially principal, Cantor \'etale groupoid. Then $\mathsf{F}(\G)$ is locally closed by \autoref{prop:FGgluing}, and it is vigorous by \autoref{purely infinite if and only if vigor}. 

(F.2) implies (F.1). Let $\mathcal{G}$ be the groupoid of germs of the canonical action $F\acts \mathcal{C}$. We claim that $F = \mathsf{F}(\G)$. It is immediate that 
$F\subseteq \mathsf{F}(\G)$, so we only prove the 
converse inclusion. 
Given $g \in \mathsf{F}(\G)$, we use the description of the topological full group of a germ groupoid given in \autoref{TFGdefgroupaction} to
find a 
partition $\mathcal{C}=X_1\sqcup\cdots\sqcup X_n$ of $\mathcal{C}$ into compact and open sets, and group elements $f_1,\ldots,f_n\in F$ such that 
$g|_{X_j}=f_j|_{X_j}$ for all $j=1,\ldots,n$. Since $F$ is locally closed, it follows immediately that $g$ belongs 
to $F$, as desired.

The fact that $\G_F=\mathrm{Germ}(F\acts \mathcal{C})$ is minimal and purely infinite follows from \autoref{purely infinite if and only if vigor}, since its topological full group is 
vigorous.
\vspace{0.2cm}

(A.1) implies (A.2). Similarly to the first part of this proof, vigor of $A$ follows from 
\autoref{purely infinite if and only if vigor}, while 
simplicity follows from 
\autoref{minimal essentially principal then simple}.

(A.2) implies (A.1). Let $F_A\leq \mathrm{Homeo}(\mathcal{C})$ denote the smallest subgroup of 
$\mathrm{Homeo}(\mathcal{C})$ containing $A$ which is locally closed. Since $A$ is vigorous, it is 
immediate to see that so is $F_A$. By the first part of 
this theorem, there is a minimal, purely infinite, essentially principal, Cantor \'etale groupoid $\G_A$
satisfying $\mathsf{F}(\G_A)\cong F_A$. By the equivalence
between (1) and (6) in \cite[Theorem~1.11]{BleEllHyd_sufficient_2020}, it follows that 
$A$ is the commutator subgroup of $F_A$, 
and in combination with \autoref{minimal essentially principal then simple} we get 
$A= [F_A,F_A]\cong \mathsf{D}(\G_A)= \mathsf{A}(\G_A)$, 
as desired. 

\vspace{0.2cm}

We now show the uniqueness part of the statement. We claim that if $\G$ is a purely infinite, minimal Cantor groupoid, then $\mathsf{A}(\G) \acts \G^{(0)}$ is a Rubin action. (This implies that $\mathsf{F}(\G) \acts \G^{(0)}$ is also a Rubin action, since $\mathsf{F}(\G)$ contains $\mathsf{A}(\G)$.) To show this, let $U \subseteq \G^{(0)}$ be a compact and open set and let 
$x,y\in U$. Write $U$ as a disjoint union 
$U=X\sqcup Y$ such that $x \in X$ and $y \in Y$. 
Let $Y_n \subseteq Y$ be a decreasing sequence of compact and open neighbourhoods of $y$ such that $\bigcap_{n \in \mathbb{N}} Y_n=\{y\}$. Since $\mathsf{A}(\G) \acts \G^{(0)}$ is vigorous
by \autoref{lma:DGvigor},
for every $n\in\N$ there exists $g_n \in \mathsf{A}(\G)$ with $\mathrm{supp}(g_n)\subseteq U$ such that $g_n(X) \subseteq Y_n$. Therefore $g_n\cdot x \in Y_n$ for all $n\in\N$, and thus  $y = \lim\limits_{n\to\I} g_n\cdot x$ belongs to the closure of
\[\big\{ g\cdot x \colon g \in \mathsf{A}(\G) \mbox{ and } g|_{\G^{(0)}\setminus U}=\mathrm{id}\big\}.\] 
Since $y\in U$ is arbitrary, this shows that 
\[U=\overline{ \big\{g\cdot x \colon g \in \mathsf{A}(\G) \mbox{ and } g|_{\G^{(0)}\setminus U}=\mathrm{id}\big\} },\]
and hence $\mathsf{A}(\G) \acts \G^{(0)}$ is a Rubin action. This proves the claim.

We now show uniqueness of $\G_F$ in (F.1). Note that, since
$\G_F$ is essentially principal, it is isomorphic to the groupoid of germs
of its canonical action $\mathsf{F}(\G_F)\curvearrowright \G_F^{(0)}$. Now, assume that $\mathcal{H}$ is another minimal, purely infinite, essentially principal, \'etale groupoid with $\mathsf{F}(\mathcal{H})\cong F$. Since $\mathsf{F}(\G_F)$ and $\mathsf{F}(\mathcal{H})$ act in a Rubin manner on the Cantor space by the previous paragraph, \autoref{rubin} 
implies that 
$\mathsf{F}(\G)\curvearrowright \G^{(0)}$ is conjugate to $\mathsf{F}(\mathcal{H})\curvearrowright \mathcal{H}^{(0)}$. In particular, their
groupoids of germs are clearly isomorphic, and thus $\G\cong\mathcal{H}$. 

Finally, we turn to uniqueness of $\G_A$ in (A.1), so let $\mathcal{H}$ be 
a minimal, purely infinite, essentially principal, \'etale groupoid with $\mathsf{A}(\mathcal{H})\cong A$. Since $\mathsf{A}(\G)\curvearrowright \G^{(0)}$ and $\mathsf{A}(\mathcal{H})\curvearrowright \mathcal{H}^{(0)}$ are Rubin actions, it follows from \autoref{rubin} that they are conjugate. Thus, if we
denote by $F_{\mathsf{A}(\mathcal{H})}$ the smallest subgroup of $\mathrm{Homeo}(\mathcal{H}^{(0)})$ which is locally closed and contains $\mathsf{A}(\mathcal{H})$,
it follows that $F_A\cong F_{\mathsf{A}(\mathcal{H})}$. On the other hand, it is 
routine to check that $F_{\mathsf{A}(\mathcal{H})}$ is precisely  $\mathsf{F}(\mathcal{H})$. Putting these things together, we conclude that 
\[\mathsf{F}(\G_A)\cong F_A\cong F_{\mathsf{A}(\mathcal{H})}\cong \mathsf{F}(\mathcal{H}).\]
Thus $\mathcal{H}\cong \G_A$ by uniqueness in (F.1).
\end{proof}

\begin{rem}
The proof above also shows that 
if $A\leq F\leq\mathrm{Homeo}(\mathcal{C})$ are 
nested groups such that $A$ is vigorous and simple
and $F$ is locally closed, then there exists a
unique minimal, purely infinite, topologically principal,
\'etale Cantor groupoid $\G$ such that $A\cong\mathsf{A}(\G)$ and $F\cong \mathsf{F}(\G)$. In particular,
normality of $A$ in $F$ follows from the remaining
assumptions. 
\end{rem}

As a corollary, we can deduce that simple, vigorous
subgroups of $\mathrm{Homeo}(\mathcal{C})$ are 
C*-simple. More precisely:

\begin{cor}\label{cor:C*simple}
Let $A\leq F\leq\mathrm{Homeo}(\mathcal{C})$ be 
nested groups such that $A$ is vigorous and simple,
and $F$ is locally closed. Then any intermediate
group $A\leq H\leq F$ is C*-simple. 
\end{cor}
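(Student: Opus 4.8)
**Proof plan for Corollary~\ref{cor:C*simple}.**

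The plan is to reduce the statement to \autoref{thm:Realization} and then invoke known C*-simplicity criteria for topological full groups. First I would observe that the hypotheses are exactly those appearing in the Remark following \autoref{thm:Realization}: since $A$ is vigorous and simple and $F$ is locally closed (and vigor passes to the larger group $F$, being a local property preserved under passage to supergroups), there exists a unique minimal, purely infinite, essentially principal, étale Cantor groupoid $\G$ with $A\cong \mathsf{A}(\G)=\mathsf{D}(\G)$ and $F\cong\mathsf{F}(\G)$; moreover $A$ is normal in $F$. Under this identification, any intermediate group $A\leq H\leq F$ becomes an intermediate group $\mathsf{D}(\G)\leq H\leq \mathsf{F}(\G)$.

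It therefore suffices to prove: for $\G$ minimal, purely infinite, essentially principal, étale and Cantor, every intermediate group $\mathsf{D}(\G)\leq H\leq \mathsf{F}(\G)$ is C*-simple. The key input is that $\mathsf{D}(\G)=\mathsf{A}(\G)$ is vigorous (\autoref{purely infinite if and only if vigor}), hence so is $\mathsf{F}(\G)$ and any intermediate $H$; in particular, by part~(1) of \autoref{lemma:NoInvMeas}, no such $H$ admits an invariant Borel probability measure on $\G^{(0)}$. The cleanest route is to follow Scarparo's dichotomy argument: if some intermediate $H$ were not C*-simple, then (as in the analysis of \cite{scarparo2021dichotomy}, using that the action $\mathsf{F}(\G)\acts\G^{(0)}$ is minimal and that point stabilisers of $\mathsf{D}(\G)$ are non-amenable, which is forced by vigor and non-C*-simplicity) one could manufacture an $\mathsf{F}(\G)$-invariant Borel probability measure on $\G^{(0)}$ out of an invariant measure for a rigid stabiliser subgroup; this contradicts vigor of $\mathsf{F}(\G)$ via \autoref{lemma:NoInvMeas}. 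Alternatively, and more directly, one invokes \autoref{cor:RelativeICC}: the $\mathsf{D}(\G)$-conjugacy class of every nontrivial element of $\mathsf{F}(\G)$ is infinite, and since $\mathsf{D}(\G)\leq H$, the $H$-conjugacy class of every nontrivial element of $H$ is infinite; combined with normality and simplicity of $\mathsf{D}(\G)$ inside $H$, the uniform recurrence / Furman-type criterion (e.g.\ the results of Breuillard--Kalantar--Kennedy--Ozawa together with Le~Boudec--Matte~Bon \cite{le2016subgroup}, applied to the minimal strongly proximal action $\mathsf{F}(\G)\acts \G^{(0)}$ restricted to $H$) yields C*-simplicity of $H$.

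The main obstacle I expect is verifying that the action of the intermediate group $H$ on $\G^{(0)}$ genuinely meets the hypotheses of whichever C*-simplicity criterion one uses --- concretely, that the rigid stabilisers $H_{x}^{0}$ (or the subgroups of $H$ supported on a proper compact open set) are non-amenable, and that the action is "micro-supported" in Rubin's sense so that Le~Boudec--Matte~Bon's framework applies. Here non-amenability follows because such stabiliser subgroups contain copies of $\mathsf{D}(\G|_U)$ for proper compact open $U$, which are again vigorous (being alternating groups of minimal purely infinite reductions, by \autoref{minimal essentially principal then simple} and \autoref{purely infinite if and only if vigor}) and hence non-amenable by \autoref{lemma:NoInvMeas}; micro-supportedness is exactly the Rubin-action property established inside the proof of \autoref{thm:Realization}. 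Once these two points are in place, C*-simplicity of $H$ is immediate from the cited criterion, and specialising to $A=H$ or to locally closed $H$ gives the stated particular cases.
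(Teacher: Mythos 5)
Your reduction step is exactly the paper's: pass via \autoref{thm:Realization} and the remark following it to a minimal, purely infinite, essentially principal Cantor groupoid $\G$ with $A\cong\mathsf{A}(\G)$ and $F\cong\mathsf{F}(\G)$, so that the problem becomes C*-simplicity of intermediate groups $\mathsf{D}(\G)\leq H\leq\mathsf{F}(\G)$. Where you diverge is in the endgame. The paper's actual proof is the short one: it combines \autoref{cor:RelativeICC} (every nontrivial element of $\mathsf{F}(\G)$ has infinite $\mathsf{D}(\G)$-conjugacy class) with B\'edos--Omland's C*-irreducibility theorem \cite[Theorem~6.2]{B_dos_2023} for the inclusion $C^*_r(\mathsf{A}(\G))\subseteq C^*_r(\mathsf{F}(\G))$, which yields simplicity of \emph{every} intermediate C*-algebra at once, in particular of $C^*_r(H)$. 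Your first route --- the Scarparo-style dichotomy, deriving an $\mathsf{F}(\G)$-invariant probability measure from amenability of a rigid stabiliser and contradicting \autoref{lemma:NoInvMeas}, with non-amenability of the stabilisers $H_x^0$ coming from the vigorous copies of $\mathsf{A}(\G|_U)$ they contain --- is a legitimate alternative, and is precisely the adaptation the paper says "can be carried out in a routine way" before opting for the shorter argument; it buys self-containedness at the cost of length. One caution about your "alternative, more direct" second route: you attach \autoref{cor:RelativeICC} to the Breuillard--Kalantar--Kennedy--Ozawa / Le~Boudec--Matte~Bon boundary-action machinery, but the relative ICC property is not the hypothesis of those criteria --- it is the hypothesis of the B\'edos--Omland (and R{\o}rdam-type) C*-irreducibility results. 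As written, that second route conflates two distinct criteria and does not close on its own; either drop the relative ICC input and run the micro-supported/non-amenable-rigid-stabiliser argument (your route A), or keep relative ICC and cite \cite[Theorem~6.2]{B_dos_2023} as the paper does.
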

\begin{proof}
By \autoref{thm:Realization} and the remark after it, 
there exists a minimal, purely infinite, 
topologically principal, \'etale Cantor 
groupoid $\G$ such that $A\cong\mathsf{A}(\G)$ and $F\cong \mathsf{F}(\G)$. Using vigor of
$\mathsf{A}(\G)\acts \G^{(0)}$, and in particular the 
absence of invariant probability measures (\autoref{lemma:NoInvMeas}), the arguments in \cite{scarparo2021dichotomy} can be adapted in a routine 
way to show that both $\mathsf{A}(\G)$ and 
$\mathsf{F}(\G)$ are C*-simple. Instead of doing this,
we give a more direct proof using the recent
results in \cite{B_dos_2023}; we thank Eduardo Scarparo
for sharing this argument with us and for allowing us
to include it here.

By \autoref{cor:RelativeICC}, the $\mathsf{D}(\G)$-conjugacy classes of $\mathsf{F}(\G)$ are infinite. Hence, it follows from \cite[Theorem 6.2]{B_dos_2023}, that every intermediate C$^*$-algebra 
$C^*_r(\mathsf{A}(\G)) \leq A  \leq C^*_r(\mathsf{F}(\G))$ is simple. Since $C^*_r(H)$ is of that form,
the result follows.
\end{proof}

Here is another direct application of \autoref{thm:Realization}.

\begin{cor}
    Vigorous simple groups have no proper characters. 
\end{cor}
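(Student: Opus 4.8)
The plan is to reduce the statement to \autoref{D has no proper characters} via the realization theorem. Let $A\leq \mathrm{Homeo}(\mathcal C)$ be a simple vigorous group. First I would invoke the equivalence (A.2)$\Rightarrow$(A.1) in \autoref{thm:Realization}: since $A$ is vigorous and simple, there exists a minimal, purely infinite, essentially principal, Cantor \'etale groupoid $\G_A$ with $\mathsf{A}(\G_A)\cong A$ as abstract groups. Since having no proper characters is an invariant of the isomorphism class of an abstract group (the defining conditions (a)--(c) only refer to the group structure, and the identity and regular characters are defined purely group-theoretically), it suffices to show that $\mathsf{A}(\G_A)$ has no proper characters.

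The second step is then immediate: \autoref{D has no proper characters} states exactly that for a purely infinite, minimal Cantor groupoid $\G$, the group $\mathsf{A}(\G)$ has no proper characters. Applying this with $\G=\G_A$ finishes the argument. So the proof is essentially two lines: transport the abstract group to a groupoid via \autoref{thm:Realization}, then quote \autoref{D has no proper characters}.

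The only thing to be careful about is whether "vigorous" in the hypothesis already presupposes an action on the Cantor space; by \autoref{df:Vigorous} it does, so the phrase "vigorous simple group" should be read as "simple subgroup $A\leq\mathrm{Homeo}(\mathcal C)$ whose action on $\mathcal C$ is vigorous," which is precisely the input (A.2) of \autoref{thm:Realization} requires. There is no real obstacle here; the work was done in establishing \autoref{thm:Realization} and \autoref{D has no proper characters}, and this corollary merely packages their combination.

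\begin{proof}
Let $A\leq \mathrm{Homeo}(\mathcal{C})$ be a vigorous simple group. By the
implication (A.2)$\Rightarrow$(A.1) of \autoref{thm:Realization}, there
exists a minimal, purely infinite, essentially principal, Cantor \'etale
groupoid $\G_A$ with $\mathsf{A}(\G_A)\cong A$ as abstract groups. By
\autoref{D has no proper characters}, $\mathsf{A}(\G_A)$ has no proper
characters. Since the property of having no proper characters depends only
on the isomorphism class of the group, it follows that $A$ has no proper
characters.
\end{proof}
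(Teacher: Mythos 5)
Your proof is correct and is exactly the argument the paper intends: the corollary is introduced there as "another direct application of \autoref{thm:Realization}", i.e.\ realize the vigorous simple group $A$ as $\mathsf{A}(\G_A)$ for a purely infinite minimal Cantor groupoid via (A.2)$\Rightarrow$(A.1) and then quote \autoref{D has no proper characters}. Your added remark that the absence of proper characters is an abstract group invariant is a sensible (if routine) point to make explicit.
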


One can combine \autoref{thm:Realization} with \autoref{thm:equiv} in order to obtain further equivalent conditions for a group to be 
realized as the alternating group of a purely infinite, minimal groupoid. 
More specifically:

\begin{rem}
    Let $\mathcal{C}$ denote the Cantor space and let $A \leq \mathrm{Homeo}(\mathcal{C})$ be a subgroup. Then the following are equivalent:
    \begin{enumerate} 
\item[(A.1)] There exists a minimal, purely infinite, essentially principal, Cantor \'etale groupoid $\G_A$ with $\mathsf{A}(\G_A)\cong A$, and
\item[(A.2)] $A$ is vigorous and simple.
 \item[(A.3)] $A$ is simple and for all $x \in \mathcal{C}$, the subgroup 
 \[A_x=\{g \in A \colon \text{there exists a neighbourhood } Y \text{ of } x \text{ such that } g|_Y=\id_Y\}\] 
 acts compressibly on $\mathcal{C} \setminus \{x\}$.
\end{enumerate}
   \end{rem}

We close this section by pointing out that \autoref{thm:Realization}
can be generalized to non-compact unit spaces. Indeed, the only step in our arguments where compactness is needed is in the uniqueness part,
since we use Rubin's theorem which is valid only for compact spaces.
The way to deal with non-compact spaces is to use the generalisation of Matui's spatial isomorphism proved by Nyland-Ortega in \cite[Theorem~7.2]{NylOrt_topological_2019} in the setting where we replace $\mathcal{C}$ with an arbitrary totally disconnected locally compact Hausdorff space $X$. In this context, one must also replace $\text{Homeo}(\mathcal{C})$ with the group of compactly supported homeomorphisms:
\[\text{Homeo}_\mathrm{c}(X)=\{f \in \text{Homeo}(X)\colon \text{supp}(f) \text{ is compact}\}.\] 
We omit the details, but stress the fact that these arguments also
allow one to deal with non-Hausdorff groupoids since they only depend
on effectiveness. 

\section{Examples and applications}
\label{s5}

For the purposes of this section, we use groupoid homology, primarily as a way to compute the quotients $\mathsf{F}(\G)/\mathsf{D}(\G)$ for key examples. Fortunately, groupoid homology has already been computed in many interesting examples. For more on groupoid homology, we refer the reader to \cite{Mat_homology_2012}. Thanks to Li's proof of the AH-conjecture for a large class of ample groupoids \cite[Corollary E]{Li_ample_2022}, we now have very concrete pictures of the abelianisation $\mathsf{F}(\G)_{\mathrm{ab}}$ of the full groups of these groupoids in terms of their groupoid homology:

\begin{thm}\label{ah conjecture}
(\cite[Corollary~E]{Li_ample_2022}).
Let $\G$ be an ample, minimal groupoid with comparison\footnote{Comparison is a notion for groupoids defined in the second paragraph of page 6 of \cite{Li_ample_2022}. For our purposes, it suffices to mention that comparison is automatic for purely infinite, minimal 
groupoids; see the comments at the top of page 5 of \cite{Li_ample_2022}. } and such that
$\G^{(0)}$. Then there exists a short exact sequence:
\[\xymatrix{H_2(\G) \ar[r]& H_{0}(\G )\otimes \mathbb{Z}_2 \ar[r]^-{j} &\mathsf{F}(\G)_{\mathrm{ab}} \ar[r]^-I& H_1(\G)
\ar[r] &0.}\] \end{thm}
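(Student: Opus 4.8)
The plan is to invoke \cite[Corollary~E]{Li_ample_2022} directly: the statement is exactly Matui's AH conjecture in the refined form Li establishes, and since the present section only uses the resulting short exact sequence rather than reproving it, a citation is all that is required. For orientation I will nonetheless sketch the shape of the argument and why the comparison hypothesis enters.

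First I would recall the index map. Given $g \in \mathsf{F}(\G)$, one chooses a partition $\G^{(0)} = \bigsqcup_{j=1}^n s(B_j)$ into compact open sets on which $g$ agrees with compact open bisections $B_1, \dots, B_n$, and sets $I(g) = \sum_{j=1}^n [\mathbbm{1}_{B_j}] \in H_1(\G)$. Following Matui, one checks that $I$ is a well-defined homomorphism which vanishes on commutators, hence descends to a surjection $I \colon \mathsf{F}(\G)_{\mathrm{ab}} \twoheadrightarrow H_1(\G)$; this already produces the right-hand portion of the sequence together with exactness at $H_1(\G)$.

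Next I would describe the map $j$ out of $H_0(\G) \otimes \Z_2$. Here $H_0(\G)$ is generated by the classes $[\mathbbm{1}_U]$ of compact open $U \subseteq \G^{(0)}$, and to such a $U$ one associates the ``transposition'' element of $\mathsf{F}(\G)$ given by a compact open bisection interchanging two disjoint copies of $U$ inside $\G^{(0)}$; such copies exist by minimality together with comparison. The reduction mod $2$ is forced since performing such a swap twice yields a product of commutators. The substance of the theorem is then the identity $\operatorname{im}(j) = \ker(I)$ on $\mathsf{F}(\G)_{\mathrm{ab}}$ --- this is the AH conjecture proper --- and Li's refinement additionally identifies $\ker(j)$ with the image of a connecting homomorphism $H_2(\G) \to H_0(\G) \otimes \Z_2$ coming from the long exact (or spectral) sequences for groupoid homology.

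The hard part, and the reason we merely cite the result, is establishing exactness in the middle, $\ker(I) = \operatorname{im}(j)$, together with the description of $\ker(j)$ via $H_2(\G)$: this amounts to writing an element of $\ker(I)$ explicitly as a product of commutators and transposition elements, and it is precisely at this step that comparison is invoked to manufacture the bisections needed to carry out the reduction. Since comparison is automatic for minimal, purely infinite groupoids, the hypothesis is harmless for every application in this paper. For the complete proof I would simply refer to \cite[Corollary~E]{Li_ample_2022}, building on the homological preliminaries of \cite{Li_ample_2022} and \cite{Mat_homology_2012}.
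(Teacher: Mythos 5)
Your proposal is correct and takes essentially the same approach as the paper: the paper also states this result as a direct citation of \cite[Corollary~E]{Li_ample_2022} and gives no independent proof. Your sketch of the index map $I$ and the transposition map $j$ is consistent with the standard formulation of Matui's AH conjecture and with Li's refinement, so nothing further is needed.
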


Our first application giving us restrictions on what kinds of actions 
are possible for $\mathsf{F}(\G)$.

\begin{prop} \label{essentially free automatic} 
Let $\G$ be a purely infinite minimal groupoid such that $\mathsf{F}(\G)_{\mathrm{ab}}$ is finite (for example, if $H_1(\G)$ is finite and $H_0(\G)$ has finite rank). Then every faithful ergodic measure-preserving action of $\mathsf{F}(\G)$ is essentially free. 
\end{prop}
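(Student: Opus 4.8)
The plan is to combine the character-theoretic results already established (\autoref{D has no proper characters} and \autoref{finite factor}) with the standard dictionary between essentially free measure-preserving actions and the vanishing of certain characters. Recall that if $\Gamma \acts (Z,\mu)$ is an ergodic measure-preserving action, then the function $g \mapsto \mu(\{z \in Z : gz = z\})$ is a character on $\Gamma$ (it is conjugation-invariant, normalized, and positive-definite by the GNS-type construction attached to the Koopman representation on $L^2(Z,\mu)$, or more precisely to the quasi-regular representation of the orbit equivalence relation). The action is essentially free precisely when this character is the regular character $\delta_1$. So the strategy is: show that on $\mathsf{F}(\G)$ the only characters of this particular ``fixed-point'' type, restricted to where they can be nontrivial, are forced to be regular once $\mathsf{F}(\G)_{\mathrm{ab}}$ is finite.

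First I would recall that by \autoref{minimal essentially principal then simple} we have $\mathsf{A}(\G) = \mathsf{D}(\G)$, and this is a simple, ICC group (ICC by the combination of \autoref{purely infinite if and only if vigor} and part~(2) of \autoref{lemma:NoInvMeas}), which by \autoref{D has no proper characters} has no proper characters. Given a faithful ergodic measure-preserving action $\mathsf{F}(\G) \acts (Z,\mu)$, restrict it to $\mathsf{A}(\G) = \mathsf{D}(\G)$. The associated fixed-point character $\chi(g) = \mu(\mathrm{Fix}(g))$ restricts to a character on $\mathsf{D}(\G)$; write it as a (possibly non-extremal) character and decompose it into extremal ones. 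Since $\mathsf{D}(\G)$ has no proper characters, every extremal component is either the identity character or the regular character of $\mathsf{D}(\G)$. The identity character corresponds to a trivial sub-action; but faithfulness of the $\mathsf{F}(\G)$-action together with normality and simplicity of $\mathsf{D}(\G)$ should rule out a positive-measure set fixed pointwise by all of $\mathsf{D}(\G)$ (if $\mathsf{D}(\G)$ fixed a positive-measure set $E$, then by ergodicity of the $\mathsf{F}(\G)$-action the $\mathsf{F}(\G)$-translates of $E$ cover $Z$, and since $\mathsf{D}(\G)$ is normal it would fix each translate, forcing $\mathsf{D}(\G)$ to act trivially, contradicting faithfulness). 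Hence $\chi|_{\mathsf{D}(\G)} = \delta_1$, i.e. the restricted action of $\mathsf{D}(\G)$ is essentially free.

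Next I would promote essential freeness of the $\mathsf{D}(\G)$-action to essential freeness of the full $\mathsf{F}(\G)$-action. Here is where the finiteness of $\mathsf{F}(\G)_{\mathrm{ab}} = \mathsf{F}(\G)/\mathsf{D}(\G)$ enters. Fix $g \in \mathsf{F}(\G) \setminus \{1\}$ and suppose the fixed-point set $\mathrm{Fix}(g)$ has positive measure; let $E \subseteq \mathrm{Fix}(g)$ be a positive-measure $g$-fixed set. The coset $g\mathsf{D}(\G)$ maps to a finite cyclic-or-not subgroup of the finite group $\mathsf{F}(\G)_{\mathrm{ab}}$, so some power $g^n$ with $n \geq 1$ lies in $\mathsf{D}(\G)$; then $g^n$ fixes $E$ as well, and by essential freeness of the $\mathsf{D}(\G)$-action we get $g^n = 1$, so $g$ has finite order. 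Now one uses an averaging / induction argument over the finite group $\mathsf{F}(\G)_{\mathrm{ab}}$: the map $z \mapsto \mu(\mathrm{Fix}(z))$ descends to a finitely-supported conjugation-invariant function whose nonzero values outside $1$ would violate the character rigidity already available — alternatively, one invokes \autoref{finite factor} directly, which says every indecomposable character of $\mathsf{F}(\G)$ is either regular or factors through $\mathsf{F}(\G)_{\mathrm{ab}}$; a character factoring through a finite group that is also a fixed-point character of a faithful ergodic action must be supported, after decomposition, only on the regular part (any identity-type component would again force a positive-measure globally fixed set, contradicting faithfulness via the normality of $\mathsf{D}(\G)$ and ergodicity). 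Concluding, $\chi = \delta_1$ on all of $\mathsf{F}(\G)$, which is exactly essential freeness.

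The main obstacle I anticipate is the bookkeeping in the last paragraph: cleanly deducing that the fixed-point character of $\mathsf{F}(\G)$ has no ``identity-type'' component without circularity. The cleanest route is probably to argue purely at the level of the action: (i) show $\mathsf{D}(\G)$ acts essentially freely via its no-proper-characters property as above, then (ii) observe that the set $N = \{z \in Z : \mathrm{Stab}_{\mathsf{F}(\G)}(z) \neq 1\}$ is $\mathsf{F}(\G)$-invariant and, on $N$, each stabilizer injects into the finite group $\mathsf{F}(\G)_{\mathrm{ab}}$ (since it meets $\mathsf{D}(\G)$ trivially by (i)); by a standard argument for actions with uniformly finite stabilizers on an ergodic space one shows $\mu(N) = 0$ unless the stabilizers are generically a fixed finite \emph{normal} subgroup, which is then killed by faithfulness. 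I expect step (ii) to require a short but careful measure-theoretic selection argument (choosing a measurable reduction to the fixed finite group of possible stabilizer images), and that is the part I would spend the most care on; everything else is a direct appeal to the previously-proved character statements.
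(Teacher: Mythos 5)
Your road map --- feed the fixed-point character $\chi(g)=\mu(\mathrm{Fix}(g))$ into the character classification of \autoref{D has no proper characters} and \autoref{finite factor} --- is the right one, and it is essentially what the cited Dudko--Medynets machinery does (the paper's own proof is a two-line appeal to \cite[Theorem 2.11]{DudMed_finite_2014} after noting that finiteness of $\mathsf{F}(\G)_{\mathrm{ab}}$ leaves only finitely many finite factor representations). However, your step (i) has a genuine gap at its central point. From ``no proper characters'' you correctly get $\chi|_{\mathsf{D}(\G)}=\lambda\delta_1+(1-\lambda)\mathbf{1}$, i.e.\ $\mu(\mathrm{Fix}(g))=1-\lambda$ for \emph{every} nontrivial $g\in\mathsf{D}(\G)$. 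But your assertion that a nonzero identity component ``corresponds to a trivial sub-action'', i.e.\ yields a positive-measure set fixed pointwise by \emph{all} of $\mathsf{D}(\G)$, is not a formal consequence: a countable intersection of sets each of measure $1-\lambda$ can perfectly well be null. (Compare $S_\infty$ acting on $\{1,\dots,k\}^{\mathbb{N}}$ with a non-degenerate Bernoulli measure: every transposition fixes a set of positive measure, yet the common fixed set is null. That group has proper characters, so it is no counterexample to the proposition, but it shows the implication you need is not pure measure theory.) Closing this gap requires an actual argument, e.g.\ the GNS one: realise $\chi$ as $\langle\pi(\cdot)\xi,\xi\rangle$ with $\xi=\mathbf{1}_\Delta$ in $L^2$ of the orbit equivalence relation; the weight $1-\lambda>0$ on the identity character produces a nonzero $\pi(\mathsf{D}(\G))$-invariant vector; an $\ell^2$-function on a countable orbit that is constant on $\mathsf{D}(\G)$-suborbits is supported on the finite suborbits; and since $\mathsf{D}(\G)$ is infinite and simple it has no proper finite-index subgroups, so finite suborbits are fixed points. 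Only then does $\mu\bigl(\mathrm{Fix}(\mathsf{D}(\G))\bigr)>0$ follow, after which your normality/ergodicity/faithfulness argument correctly gives the contradiction. As written, the one implication that carries the real content of the proposition is asserted rather than proved.

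Your step (ii) is also heavier than necessary, and the measurable-selection argument you defer can be avoided entirely. Once the $\mathsf{D}(\G)$-action is essentially free, take $g\in\mathsf{F}(\G)\setminus\{1\}$ with $\mu(\mathrm{Fix}(g))>0$ and use \autoref{cor:RelativeICC} to choose infinitely many distinct $\mathsf{D}(\G)$-conjugates $g_n=h_n^{-1}gh_n$. All the sets $\mathrm{Fix}(g_n)$ have the same positive measure, and for $n\neq m$ the element $g_ng_m^{-1}$ lies in $\mathsf{D}(\G)\setminus\{1\}$, so $\mathrm{Fix}(g_n)\cap\mathrm{Fix}(g_m)\subseteq\mathrm{Fix}(g_ng_m^{-1})$ is null; infinitely many pairwise essentially disjoint sets of equal positive measure in a probability space is impossible. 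This replaces your finite-order and stabilizer-selection discussion, and it makes clear that the finiteness of $\mathsf{F}(\G)_{\mathrm{ab}}$ is used only to invoke the quoted character classification in the form the paper states it, not in the passage from $\mathsf{D}(\G)$ to $\mathsf{F}(\G)$.
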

\begin{proof}
We saw in \autoref{finite factor} that every finite factor representation factors through the abelianisation $\mathsf{F}(\G)_{\mathrm{ab}}$. Therefore, whenever $\mathsf{F}(\G)_{\mathrm{ab}}$ is finite, there are finitely many factor representations. We may then apply \cite[Theorem 2.11]{DudMed_finite_2014} to get the conclusion.
\end{proof}

We can be even more concrete in some cases where homology has been computed. 

\begin{cor}
For each of the following groups, every faithful ergodic 
measure-preserving action is essentially free:
\begin{enumerate}
\item The Brin-Higman-Thompson groups $nV_{k,r}$. 
\item More generally, topological full groups arising from products of shift of finite type groupoids.
\end{enumerate}
\end{cor}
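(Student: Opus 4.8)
The plan is to deduce both parts from \autoref{essentially free automatic}: in each case it suffices to realise the group as the topological full group $\mathsf{F}(\G)$ of a purely infinite, minimal (hence essentially principal and Cantor) \'etale groupoid $\G$ whose abelianisation $\mathsf{F}(\G)_{\mathrm{ab}}$ is finite. To verify finiteness of $\mathsf{F}(\G)_{\mathrm{ab}}$, I would invoke the exact sequence of \autoref{ah conjecture} — whose comparison hypothesis is automatic here, since $\G$ is purely infinite and minimal — which realises $\mathsf{F}(\G)_{\mathrm{ab}}$ as an extension of a subgroup of $H_1(\G)$ by a quotient of $H_0(\G)\otimes\mathbb{Z}_2$. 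Hence $\mathsf{F}(\G)_{\mathrm{ab}}$ is finite as soon as $H_1(\G)$ is finite and $H_0(\G)$ is finitely generated, and these homology groups are known for all the groupoids in question.

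For (1): by the dynamical realisation of the Brin--Higman--Thompson groups (\cite{Mat_etale_2016}; see also the tables in the introduction) one has $nV_{k,r}\cong\mathsf{F}(\mathcal{R}_r\times\mathcal{E}_k^n)$, and $\mathcal{R}_r\times\mathcal{E}_k^n$ — the product of the principal groupoid $\mathcal{R}_r$ with $n$ copies of the purely infinite, minimal full-shift groupoid $\mathcal{E}_k$ — is purely infinite, minimal, essentially principal and Cantor. Matui's computations give $H_0(\mathcal{E}_k)\cong\mathbb{Z}_{k-1}$ with $H_j(\mathcal{E}_k)=0$ for $j\geq1$, and $H_0(\mathcal{R}_r)\cong\mathbb{Z}$ with $H_j(\mathcal{R}_r)=0$ for $j\geq1$. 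Iterating the K\"unneth formula for groupoid homology (\cite{Mat_homology_2012,Mat_etale_2016}), every homology group of $\mathcal{R}_r\times\mathcal{E}_k^n$ is assembled from iterated tensor and $\mathrm{Tor}$ groups of finite cyclic groups together with a single copy of $\mathbb{Z}$; in particular $H_0$ and $H_1$ are finite. By the first paragraph $\mathsf{F}(\mathcal{R}_r\times\mathcal{E}_k^n)_{\mathrm{ab}}$ is finite, and \autoref{essentially free automatic} applies.

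For (2): a group in this class is $\mathsf{F}(\G_{A_1}\times\cdots\times\G_{A_m})$, where $\G_{A_i}$ is the groupoid of a one-sided shift of finite type with adjacency matrix $A_i$ and the product is purely infinite and minimal. Here Matui gives $H_0(\G_{A_i})\cong\coker(I-A_i^{\mathsf{t}})$, $H_1(\G_{A_i})\cong\ker(I-A_i^{\mathsf{t}})$ and $H_j(\G_{A_i})=0$ for $j\geq2$, all finitely generated; the K\"unneth formula then shows $H_0(\G)$ is finitely generated, while $H_1(\G)$ is finite exactly when each $\ker(I-A_i^{\mathsf{t}})$ is finite, equivalently when $1$ is not an eigenvalue of any $A_i$. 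This holds automatically for full shifts, recovering (1), and more generally for those products of shifts of finite type with finite Bowen--Franks-type kernels; under this condition the conclusion follows as before. No step is analytically deep: the argument is essentially bookkeeping — combining the K\"unneth formula with the four-term sequence of \autoref{ah conjecture} while keeping track of the fact that only $H_0\otimes\mathbb{Z}_2$ and $H_1$, not $H_0$ itself, need be finite — and the only genuine subtlety, which is where (2) is more delicate than (1), is that finiteness of $\mathsf{F}(\G)_{\mathrm{ab}}$ is equivalent to finiteness of the kernels $\ker(I-A_i^{\mathsf{t}})$.
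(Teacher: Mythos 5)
Your overall strategy is exactly the paper's: both arguments reduce the corollary to \autoref{essentially free automatic} by establishing that $\mathsf{F}(\G)_{\mathrm{ab}}$ is finite. The difference is that the paper disposes of the finiteness in one sentence by citing \cite{Mat_etale_2016}, whereas you re-derive it from the K\"unneth formula combined with the exact sequence of \autoref{ah conjecture}. For part (1) your computation is correct and agrees with the explicit groups $H_i(\mathcal{R}_r\times\mathcal{E}_k^n)\cong\mathbb{Z}_{k-1}^{\binom{n-1}{i}}$ recorded in \autoref{brin}, so that part is complete.

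For part (2), what you prove is not the statement as written: you only obtain essential freeness for those products of shift-of-finite-type groupoids in which $1$ is not an eigenvalue of any defining matrix, i.e.\ you have quietly added a hypothesis. Two comments on this. First, your ``exactly when'' is imprecise: since $\ker(I-A^{\mathsf{t}})$ is free abelian, it is finite iff it vanishes iff $\det(I-A^{\mathsf{t}})\neq 0$ iff $\mathrm{coker}(I-A^{\mathsf{t}})$ is finite, and in a product with at least two factors an infinite $H_1(\G_{A_j})$ tensored against finite $H_0(\G_{A_i})$'s still contributes only a finite group to $H_1$ of the product; so finiteness of $H_1(\G)$ does not force every $\ker(I-A_i^{\mathsf{t}})$ to vanish. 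Second, and more importantly, your caveat points at a genuine issue with the corollary as stated rather than merely with your proof: for an irreducible, non-permutation matrix such as $A=\left(\begin{smallmatrix}2&1\\1&2\end{smallmatrix}\right)$ one has $\det(I-A)=0$, hence $H_1(\G_A)=\ker(I-A^{\mathsf{t}})\cong\mathbb{Z}$ and $\mathsf{F}(\G_A)_{\mathrm{ab}}$ surjects onto $\mathbb{Z}$ (and the same phenomenon persists for products of such factors). The finiteness imported from \cite{Mat_etale_2016} therefore holds only on a subclass, and both the paper's one-line proof and your argument really establish part (2) only for those products whose abelianisation is finite --- which is precisely the class to which \autoref{essentially free automatic} applies. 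You should either state that hypothesis explicitly, as you implicitly do, or flag that the corollary needs it.
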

\begin{proof} It is shown in \cite{Mat_etale_2016} that the abelianisations
of the topological full groups of such groupoids are finite. Therefore the result follows from \autoref{essentially free automatic}.\end{proof}

As another application, we confirm 
Conjecture~\ref{fpresent2present} for many natural examples including the class of groups above. 

\begin{eg}
The following groups, which are already known to be finitely presented and simple, are 2-generated. 
   \begin{enumerate}
   \item Derived subgroups of the topological full groups of Graph groupoids . 
   \item Derived subgroups of the topological full groups of Katsura-Exel-Pardo groupoids.
   \item Simple subgroups arising as subgroups of topological full groups arising from products of shifts of finite type. 
   \item Derived subgroups of the topological full groups of Beta expansion groupoids. 
       \item Certain simple groups arising from groupoids that are left regular representations of Garside categories.  
   \end{enumerate} 
   In particular, Conjecture~\ref{fpresent2present} holds true for these examples.
\end{eg}
\begin{proof}
The groups listed above are known to be the topological full groups of purely infinite, minimal, ample, and effective groupoids:
for (1), this is shown in \cite{NylOrt_topological_2019}; 
for (2), this is shown in \cite{NylOrt_katsura_2021}; 
for (3), this is shown in \cite{Mat_etale_2016};
for (4), this is shown in \cite{MatMat_topological_2014}; 
and for (5) this is shown in \cite[Theorem~C]{li2021left} and 
\cite[Theorem~A]{li2022cstar}. 
By Theorem~\ref{thmintro:equiv}, these groups act vigorously on the Cantor set. It thus follows from \cite[Theorem~1.12]{BleEllHyd_sufficient_2020} that these groups are all 2-generated.  
\end{proof}

Next, we show C*-simplicity for many examples of Thompson-like groups. 

\begin{eg}
The following Thompson-like groups are C*-simple: 
    \begin{enumerate}
    \item The Higman-Thompson groups $V_{k,r}$, for $k\geq 2\in\N$
    and $r\geq 1$.
        \item More generally, 
        the Brin-Higman-Thompson groups $nV_{k,r}$, for $r,n\in\N$ and $k\geq 2$.
        \item Irrational slope Thompson groups.
    \end{enumerate}
\end{eg}
\begin{proof}
All these groups act vigorously on the Cantor set and are locally closed; in other words, they are topological full groups of minimal, purely infinite Cantor groupoids
by \autoref{thm:Realization}. The result then follows from 
\autoref{cor:C*simple}. 
\end{proof}

Using Theorem~\ref{thmintro:Realization}, we can bring new groups into our framework. As an example, we discuss the twisted Brin-Higman-Thompson groups introduced by Belk and Zaremsky \cite{belk2022twisted}, which were previously not known to be topological full groups of 
purely infinite, minimal groupoids. 
We thank James Belk for bringing this example class to our attention and for an outline of this proof.

\begin{eg}[Twisted Brin-Higman-Thompson groups]
The twisted Brin-Higman-Thompson groups $SV$ were introduced in \cite{belk2022twisted}. Already in their construction, we see that they are topological full groups, and hence they are locally closed. In the case where $S$ is finite, say $|S|=n$, it follows that $nV$ embeds into $SV$ with full support, and therefore $SV$ is vigorous. 
Similarly, if $S$ is countable, we have that $nV$ embeds into $SV$ for all $n$ in a way that $\bigcup_{n\in\N} \mathrm{supp}(nV)$ covers the whole of the Cantor space. Therefore, $SV$ is again vigorous. We conclude that $SV$ is always a locally closed, vigorous subgroup of $\mathrm{Homeo}(\mathcal{C})$. Applying Theorem~\ref{thmintro:Realization}, we deduce that $SV$ is the topological full group of an essentially principal, purely infinite, minimal Cantor \'etale groupoid. 
\end{eg}

As a further application, we can describe all proper characters and finite factor representations of the Brin-Higman-Thompson groups
$nV_{k,r}$. Our discussion parallels and extends that of Matui in \cite[Section~5.5]{Mat_etale_2016}, and we take this opportunity to rectify a minor typo
in \cite[Theorem~5.20]{Mat_etale_2016}: in the case where $n=1$, we have that $ (V_{k,r})_{\mathrm{ab}}$ vanishes when $k$ is even and is $\mathbb{Z}_2$ for $k$ odd, and not the other way around.

\begin{eg}
\label{brin}
Let $n,k,r\in\N$, and consider the groupoid $\mathcal{R}_r \times \mathcal{E}_k^n$ consisting of the $r$-stabilisation of the $n$-fold product of the shift of finite type groupoid $\mathcal{E}_k$ associated to the single vertex graph with $k$ loops. Then $\mathsf{F}(\mathcal{R}_r \times \mathcal{E}_k^n)$ has been explicitly identified with the Brin-Higman-Thompson group $nV_{k,r}$ in \cite{Mat_etale_2016}. By \cite[Theorem~F]{Li_ample_2022}, 
groupoid homology is independent of $r$, and 
it is computable via the K\"unneth formula, yielding: 
\[H_i(\mathcal{R}_r \times \mathcal{E}_k^n)\cong \mathbb{Z}_{k-1}^{^{n-1}C_i}\] for all $i\geq 0$,
where $^nC_i$ is the binomial coefficient $^nC_i=\tfrac{n!}{(n-i)!i!}$. 
Plugging the homology of the groupoid for the Brin-Higman-Thompson group $nV_{k,r}$ into \autoref{ah conjecture}, we obtain: 
\[\xymatrix{
\mathbb{Z}_{k-1}^{^{n-1}C_2}\ar[r]^-{h} & \mathbb{Z}_{k-1} \otimes \mathbb{Z}_2 \ar[r]^-{j}& (nV_{k,r})_{\mathrm{ab}}  \ar@{-->}@/^1.0pc/@[black][l]^{\rho} \ar[r]^-{I}& \mathbb{Z}_{k-1}^{{n-1}} \ar[r]& 0.
}\]
We divide the discussion into cases:
\begin{enumerate}
    \item If $k$ is even, then $\mathbb{Z}_{k-1} \otimes \mathbb{Z}_2=0$ and hence $I$ is an isomorphism. In this case, we get $(nV_{k,r})_{\mathrm{ab}}=\mathbb{Z}_{k-1}^{n-1}$. 
    \item If $k \in 4 \mathbb{Z} + 1$, say $k=4m+1$ we can see that the map 
    \[j\colon \mathbb{Z}_2\cong \mathbb{Z}_{4 m} \otimes \mathbb{Z}_2 \to (nV_{4m+1, r})_{\mathrm{ab}}\] is injective. Thus there is a homomorphism $\rho\colon (nV_{k,r})_{\mathrm{ab}} \to \mathbb{Z}_2$ such that $j \circ \rho= \mathrm{id}_{(nV_{k,r})_{\mathrm{ab}}}$, so the short exact sequence splits. In this case we get
    $(nV_{k,r})_{\mathrm{ab}}\cong \mathbb{Z}_2\oplus \mathbb{Z}_{k-1}^{n-1}$.
    \item If $k \in 4 \mathbb{Z} + 3$, say $k=4m+3$, then we need to consider the cases $n=1, n=2$ and $n>2$ separately. When 
        \begin{enumerate}
            \item $n=1$: then $j$ is necessarily injective on $\mathbb{Z}_{k-1} \otimes \mathbb{Z}_2$ since $H_2(\mathcal{R}_r \times \mathcal{E}_k)=0$. Moreover, $j$ is an isomorphism since $H_1(\G)$ also vanishes. 
            \item $n=2$: again $j$ is necessarily injective on $\mathbb{Z}_{k-1} \otimes \mathbb{Z}_2$ since $H_2(\mathcal{R}_r \times \mathcal{E}_k)=0$. Additionally, one can show as in the proof of \cite[Lemma 5.19(3)]{Mat_etale_2016} that there exists a surjection $2V_{k,r}\rightarrow \mathbb{Z}_{2m-2}$, so that the abelianisation must be $\mathbb{Z}_{2m-2}$.
        \item $n>2$: then $j=0$ because $h\colon \mathbb{Z}_{4 m+1}^{^{n-1}C_2} \to \mathbb{Z}_2$ is a surjection. 
    \end{enumerate}
\end{enumerate}

In summary, for arbitrary $n,r,k\in\N$ with $k \geq 2$ and $r,n \geq 1$, the abelianisation of the Brin-Higman-Thompson groups is given by:
$$(nV_{k,r})_{\mathrm{ab}}=\begin{cases}
0 & k=2,  \text{ and } n>1; \\
0 & k>2 \text{ is even and }n=1;\\
\mathbb{Z}_{k-1}^{n-1} &  k \text{ even and } n>1 \\
\mathbb{Z}_2 & k \in  2\mathbb{Z}+1  \text{ and } n=1; \\
\mathbb{Z}_2 \oplus \mathbb{Z}_{k-1}^{n-1} & k \in  4\mathbb{Z}+1  \text{ and } n>1; \ \\
\mathbb{Z}_{2k-2}  & k \in 4\mathbb{Z}+3  \text{ and } n=2;\\
\mathbb{Z}_{k-1}^{n-1} &  k \in 4\mathbb{Z}+3  \text{ and } n>2. \end{cases}
$$
Therefore, using \autoref{finite factor} we can concretely describe all the proper characters, which are in one-to-one correspondence with finite factor representations of $nV_{k,r}$. The outcome is the following:
\begin{itemize}
    \item[(i)] $nV_{2,r}$ has no proper characters, and $V_{k,r}$ has no proper characters if $k$ is even. 
    \item[(ii)] If $k>2$ is even and $n>1$, then $nV_{k,r}$ has $n-1$ characters, all of order $k-1$. Also, if $k \in 4\mathbb{Z}+3$ and $n>2$, then $nV_{k,r}$ has $n-1$ characters, all of order $k-1$.
    \item [(iii)] If $k$ is odd and $n=1$, then $V_{k,r}$ has one proper character of order 2.
    \item[(iv)] If $k \in 4\mathbb{Z}+1$ and $n>1$, then $nV_{k,r}$ has $n$ proper characters, one of order $2$ and the others of order $k-1$. 
    \item[(v)] If $k \in 4 \mathbb{Z}+1$ and $n=2$, then $2V_{k,r}$ has one proper character of order $2k-2$. 
\end{itemize}
\end{eg}

We end with an example that generalises the above, advertising the flexibility that comes with working with topological full groups of \'etale groupoids. These groups
have recently turned out to be very relevant in the study of embeddings of 
tensor products of $L^p$-Cuntz algebras; see \cite{GarGun_embeddings_2023}.

\begin{eg}[Perfect Brin-Higman-Thompson like groups]
Consider the variation of the Brin-Higman-Thompson groups on $r^n$ cubes such that on each dimension $j=1,\ldots,n$ we have potentially different (integrally generated) slope sets generated by integers $k_j\geq 2$. 
Write $\overline{k}$ for the $n$-tuple $\overline{k}=(k_1,\ldots,k_n)$,
and denote the group described above by $V_{\overline{k},r}$. Note 
that, if $k_1=\cdots=k_n=:k$, then $V_{\overline{k},r}$ is just the Brin-Higman-Thompson group $nV_{k,r}$.

It is not difficult to see that $V_{\overline{k},r}$ is the topological full group of the groupoid:
\[\mathcal{G}_{\overline{k},r}:=\mathcal{R}_r \times \prod_{j=1}^n \mathcal{E}_{k_j}.\]
This subclass of products of shifts of finite type groupoids fits into the framework of \cite{Mat_etale_2016}, and it is not difficult to see that if we set $g=\mathrm{gcd}(k_1-1,\ldots,k_n-1)$, then for $j\geq 0$ we have:
\[H_j\big(\mathcal{G}_{\overline{k},r}\big)=(\mathbb{Z}/g \mathbb{Z})^{^{(n-1)}C_j}.\]
    \end{eg}

In particular, if in the discussion above we have $g=1$, for example if $k_j=2$ for some $j$, then the homology vanishes. 

\begin{prop}
Let $k_1,\ldots,k_n\geq 2$ satisfy $\mathrm{gcd}(k_1-1,\ldots ,k_n-1)=1$. 
Then $V_{\overline{k},r}$ has the following properties: 
\begin{enumerate}
\item It is acyclic, so in particular perfect and simple.
         \item It is $F_\infty$; in particular it is finitely presented.
         \item It is 2-generated and C*-simple. 
         \item It has no proper characters. 
         \item Every faithful ergodic measure preserving action of $V_{\overline{k},r}$ is essentially free. 
     \end{enumerate}
\end{prop}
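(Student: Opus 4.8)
The plan is to assemble the five assertions from the results of \autoref{s3} together with the homology computation of $\mathcal{G}_{\overline{k},r}$ recorded just above. First I would observe that $\mathcal{G}_{\overline{k},r}=\mathcal{R}_r\times\prod_{j=1}^{n}\mathcal{E}_{k_j}$ is a minimal, purely infinite, essentially principal, effective, ample Cantor groupoid: pure infiniteness holds because each factor $\mathcal{E}_{k_j}$ is already purely infinite, minimality holds because every factor is minimal, and essential principality, effectiveness, ampleness and the Cantor condition on the unit space are all preserved under finite products; moreover $\mathcal{G}_{\overline{k},r}$ is expansive, being a finite product of (expansive) shift of finite type groupoids. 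Since $\gcd(k_1-1,\dots,k_n-1)=1$, the formula $H_j(\mathcal{G}_{\overline{k},r})\cong(\mathbb{Z}/g\mathbb{Z})^{\binom{n-1}{j}}$ shows that $H_j(\mathcal{G}_{\overline{k},r})=0$ for all $j\geq 0$. Feeding $H_0=H_1=H_2=0$ into the exact sequence of \autoref{ah conjecture} gives $\mathsf{F}(\mathcal{G}_{\overline{k},r})_{\mathrm{ab}}=0$, whence by \autoref{minimal essentially principal then simple} we have $V_{\overline{k},r}=\mathsf{F}(\mathcal{G}_{\overline{k},r})=\mathsf{D}(\mathcal{G}_{\overline{k},r})=\mathsf{A}(\mathcal{G}_{\overline{k},r})$, and this group is perfect and simple.

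To upgrade perfectness to the full acyclicity claimed in (1), I would invoke the homological machinery relating a minimal ample groupoid with comparison to the group homology of its topological full group: since $H_*(\mathcal{G}_{\overline{k},r})$ vanishes in every degree, one obtains $\widetilde{H}_*(V_{\overline{k},r};\mathbb{Z})=0$, i.e.\ $V_{\overline{k},r}$ is acyclic; see \cite{Li_ample_2022}. For (2), the group $V_{\overline{k},r}$ is of type $F_\infty$, hence finitely presented, by the finiteness-property arguments that apply to topological full groups of products of shift of finite type groupoids, exactly as for the Brin--Higman--Thompson groups $nV_{k,r}$ (cf.\ \cite{Mat_etale_2016, Li_ample_2022}).

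The remaining items are direct consequences of the results proved above. For (3): since $\mathcal{G}_{\overline{k},r}$ is expansive, ample, essentially principal, minimal and purely infinite, \autoref{cor:twoGeneration} shows that $V_{\overline{k},r}=\mathsf{A}(\mathcal{G}_{\overline{k},r})$ is $2$-generated (alternatively, $V_{\overline{k},r}$ is vigorous by \autoref{thm:equiv}, simple, and finitely generated since it is $F_\infty$, so \cite[Theorem~1.12]{BleEllHyd_sufficient_2020} applies); and since $V_{\overline{k},r}$ is a vigorous, simple, locally closed (\autoref{prop:FGgluing}) subgroup of $\mathrm{Homeo}(\mathcal{C})$, \autoref{cor:C*simple} with $A=H=F=V_{\overline{k},r}$ shows that it is C*-simple. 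Item (4) is \autoref{D has no proper characters} applied to $\mathsf{A}(\mathcal{G}_{\overline{k},r})=V_{\overline{k},r}$. Finally, (5) follows from \autoref{essentially free automatic}, since $\mathsf{F}(\mathcal{G}_{\overline{k},r})_{\mathrm{ab}}=0$ is finite.

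The main obstacle is the acyclicity in (1) together with the $F_\infty$ statement in (2); everything else reduces to quoting \autoref{ah conjecture}, \autoref{minimal essentially principal then simple}, \autoref{cor:twoGeneration}, \autoref{cor:C*simple}, \autoref{D has no proper characters} and \autoref{essentially free automatic}. Acyclicity genuinely requires passing from the vanishing of $H_*(\mathcal{G}_{\overline{k},r})$ to the vanishing of the group homology of $V_{\overline{k},r}$, and $F_\infty$ requires constructing a suitable highly connected complex on which $V_{\overline{k},r}$ acts with finite-type stabilisers, in the spirit of the Stein--Farley complexes used for the Brin--Higman--Thompson groups.
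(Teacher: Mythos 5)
Your proposal is correct and follows essentially the same route as the paper: vanishing of $H_*(\mathcal{G}_{\overline{k},r})$ when $g=1$, the AH exact sequence of \autoref{ah conjecture} to get trivial abelianisation hence perfectness and simplicity via \autoref{minimal essentially principal then simple}, acyclicity from Li's results (the paper cites \cite[Corollary~D]{Li_ample_2022} specifically), and the remaining items from \autoref{cor:twoGeneration}, \autoref{cor:C*simple}, \autoref{D has no proper characters} and \autoref{essentially free automatic}. You actually supply more detail than the paper, which simply says ``the rest of the claims follow from our results.''
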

\begin{proof}
We observed above that $\mathrm{gcd}(k_1-1,\ldots,k_n-1)=1$ implies that the homology of $\mathcal{G}_{\overline{k},r}$ vanishes. Inserting this computation into \autoref{ah conjecture}, it is easily seen that 
$V_{\overline{k},r}=\mathsf{F}(\mathcal{G}_{\overline{k},r})$
is perfect in this case. Moreover, by \cite[Corollary~D]{Li_ample_2022}, it follows that $V_{\overline{k},r}$ is acyclic. The rest of the claims
follow from our results.
\end{proof}


\section{Outlook}

Our first two questions address obvious ‘gaps’ in Table~\ref{table 1}. In \cite{brin2007algebra}, the so-called braided Higman-Thompson groups were introduced; these groups exhibit many shared properties with $V$ and are often type $F_\infty$. It would be interesting to understand if these groups fit into our framework.   

\begin{qst}\label{qst1}
Can the braided Higman-Thompson groups be realised as topological full groups of some purely infinite minimal groupoids?
\end{qst}

Another class which has recently been of interest is certain full automorphism groups $V_r(\Sigma)$ of Cantor algebras, as described in \cite{cohomological}, which have been shown to be type $F_\infty$ whenever the underlying Cantor algebras $U_r(\Sigma)$ are \textit{valid, bounded} and \textit{complete}. In a similar spirit to the question above, we ask:

\begin{qst}\label{qst2}
 Can the family of groups $V_r(\Sigma)$, for $U_r(\Sigma)$ valid, bounded and complete, be described as the topological full groups of purely infinite minimal groupoids?
\end{qst}

In view of Theorem \ref{thmintro:Realization}, Questions~\ref{qst1} and~\ref{qst2} are equivalent to asking whether the (simple) derived subgroups of the groups described admit vigorous actions on the Cantor space. The difficulty in both cases above is that the groups in
question are not constructed as subgroups of homeomorphisms of the
Cantor set.

The next question relates to Conjecture~\ref{fpresent2present} in the introduction. Topological full groups provide interesting examples of simple, finitely generated groups outside of the purely infinite setting, but it is currently not known if any example outside of the purely infinite setting is finitely generated.
We therefore ask:

\begin{qst}
Are there finitely presented, minimal topological full groups outside of the purely infinite setting? \end{qst}

Our final question addresses the scope of Corollary~\ref{nopropercharacters}. In recent work in preparation by Dudko-Medynets, they show that for AF-groupoids, there is a one-to-one correspondence between proper characters of the topological full group and invariant probability measures on the unit space. Corollary~\ref{nopropercharacters} shows that the same is true for purely infinite minimal groupoids (since these do not admit invariant probability measures). It seems reasonable to believe
that this correspondence may exist in full generality, so we ask:

\begin{qst} Let $\G$ be an \'etale, essentially principal Cantor groupoid. Is there a one-to-one correspondence between invariant probability measures on $\G$ and a proper characters on $\mathsf{A}(\G)$? \end{qst}

\bibliographystyle{plain}


\end{document}